\documentclass[11pt]{amsart}

\usepackage[margin=1in]{geometry}

\usepackage[T1]{fontenc} 
\usepackage{lmodern} 
\usepackage{amsmath,amssymb,mathtools} 
\usepackage{microtype} 
\usepackage{setspace}

\usepackage{xcolor}
\definecolor{paperlink}{RGB}{0,0,114}
\usepackage[colorlinks=true,citecolor=paperlink,linkcolor=paperlink,urlcolor=paperlink]{hyperref}

\hypersetup{ pdftitle={Bounded symbols and one-component geometry}, pdfauthor={Sam Looi}, 
pdfsubject={Bounded symbols of truncated Toeplitz operators and the characterization of one-component inner functions}, pdfkeywords={truncated Toeplitz operator, bounded symbol, inner function, one-component inner function, connected sublevel set} }

\newtheorem{theorem}{Theorem}[section] 
\newtheorem{lemma}[theorem]{Lemma} 

\newtheorem{proposition}[theorem]{Proposition} 
\newtheorem{corollary}[theorem]{Corollary} 
\theoremstyle{remark} 
\newtheorem{remark}[theorem]{Remark}

\newcommand{\D}{\mathbb D} \newcommand{\T}{\mathbb T} \newcommand{\TT}{\mathcal T} \newcommand{\CLS}{\mathrm{CLS}} \newcommand{\ip}[2]{\left\langle #1,#2\right\rangle} \newcommand{\norm}[1]{\left\lVert #1\right\rVert} \DeclareMathOperator{\arctanh}{arctanh}

\numberwithin{equation}{section} 
\title[The bounded symbol conjecture and one-component geometry]{Bounded symbols and one-component geometry}\thanks{An earlier version of this article, containing all the mathematical results presented here, was made publicly available on \href{https://doi.org/10.5281/zenodo.23214254}{7 October 2026}, both under a permanent DOI and on the author's personal website. Submission to arXiv was deferred to comply with its limit of two new submissions per calendar month. Subsequent revisions have been limited to wording and exposition.}
\author[S. Looi]{Sam Looi} 
\date{11 October 2026}

\subjclass[2020]{Primary 47B35; Secondary 30J05, 47B32}
\address{California Institute of Technology, Pasadena, CA 91125, USA} \email{s.looi9309@gmail.com}

\begin{document}

\begin{abstract}
We prove that every bounded truncated Toeplitz operator on a Hardy model space admits a bounded symbol if and only if its defining inner function is one-component, meaning that it has a connected sublevel set at some level in $(0,1)$. This resolves the bounded symbol conjecture of Baranov, Chalendar, Fricain, Mashreghi and Timotin and, through known equivalences, the embedding and weak factorization conjecture of Baranov, Bessonov and Kapustin. A uniform bound $R$ for the ratio of least bounded symbol norm to operator norm on the rank-one family associated with point evaluation in the disk already forces this geometry. The corresponding bound for all bounded truncated Toeplitz operators is at most $CR\log(e+R)$, where $C$ is absolute, and the logarithmic factor is optimal. In a nonzero de Branges space, the same geometric condition characterizes when every sufficiently small equivalent change of norm preserving the de Branges structure is represented exactly by reweighting the original integral with a positive density bounded above and away from zero. This characterization remains valid when the norm changes are restricted to discrete sampling norms. Outside the one-component class, every model space admits self-adjoint trace class truncated Toeplitz operators without bounded symbols, with arbitrarily small norm and singular values satisfying any prescribed positive upper bounds.
\end{abstract}

\maketitle

\section{Introduction}

A truncated Toeplitz operator is obtained by multiplication by a function, called its symbol, followed by orthogonal projection onto a model subspace of the Hardy space. A bounded symbol always gives a bounded operator, but the converse is a question about representation: different symbols can define the same operator, so an unbounded symbol may have a bounded representative. A model space has the \emph{bounded symbol property} if every bounded truncated Toeplitz operator on it admits an essentially bounded symbol. This is the property underlying Sarason's bounded symbol question~\cite[Section~1, p.~493]{Sarason}.

We resolve the bounded symbol conjecture of Baranov, Chalendar, Fricain, Mashreghi and Timotin: a model space has the bounded symbol property if and only if its defining inner function is one-component, meaning that it has a connected sublevel set at some level strictly between zero and one. Through the equivalences of Baranov, Bessonov and Kapustin~\cite[Theorem~2.4]{BBK}, this also resolves their embedding and weak factorization conjecture. In this sense, the inner function's geometry determines whether the boundedness of every truncated Toeplitz operator can be detected by boundedness of the multiplication symbol defining the operator. Only one sublevel set needs to be connected; lower sublevel sets may still have several components.

To prove the new implication, we only need to test a family of rank-one operators associated with point evaluation in the disk. Uniform control of their symbol norms relative to their operator norms gives a lower bound on the hyperbolic derivative where the values of the inner function are close to the unit circle. This bound controls the lengths of inverse paths, and a flow recovers connectedness. That is, norm estimates on an explicit, small family of operators determine connectedness of the inner function.

\subsection{Model spaces and bounded symbols}

The unit disk is $\D=\{z\in\mathbb C:|z|<1\}$, and its boundary circle is $\T=\partial\D$. We use normalized Lebesgue measure $m$ on $\T$, so that $dm(e^{it})=dt/(2\pi)$. The Hardy space $H^2$ consists of analytic functions $f(z)=\sum_{n\ge0}a_nz^n$ with $\norm f_2^2=\sum_{n\ge0}|a_n|^2<\infty$. Such functions have boundary values in $L^2(m)$, with the same norm, and we use this identification. Our inner products are linear in the first variable.

We write $H^\infty$ for the bounded analytic functions on $\D$. A nonconstant bounded analytic function $\Theta$ is called \emph{inner} if its boundary values satisfy $|\Theta|=1$ almost everywhere on $\T$. Multiplication by $\Theta$ is an isometry on $H^2$. The orthogonal complement of its range is the \emph{model space}
\[
 K_\Theta=H^2\cap(\Theta H^2)^\perp.
\]
We write $P_\Theta:L^2(m)\to K_\Theta$ for the orthogonal projection.

For example, $K_{z^2}=\operatorname{span}\{1,z\}$. More generally,
\begin{equation*}
 K_{z^N}=\operatorname{span}\{1,z,\ldots,z^{N-1}\},\qquad N\ge1\,.
\end{equation*}

Multiplication by a \emph{symbol} $\varphi\in L^2(m)$, followed by projection onto $K_\Theta$, defines the \emph{truncated Toeplitz operator}
\[
 A_\varphi^\Theta f=P_\Theta(\varphi f),
 \qquad f\in K_\Theta\cap H^\infty.
\]
The restriction to bounded $f$ ensures that $\varphi f\in L^2(m)$. We write $\TT(\Theta)$ for the operators defined this way that extend boundedly to all of $K_\Theta$. If $M_\varphi$ denotes multiplication by $\varphi$, then each element of $\TT(\Theta)$ is the bounded extension of $P_\Theta M_\varphi$ from this domain.

For example, with $\varphi=2+z+\bar z$, projection onto $K_{z^2}$ discards the $\bar z$ term from $\varphi\cdot1$ and the $z^2$ term from $\varphi\cdot z$, giving
\[
 A_\varphi^{z^2}1=2+z,\qquad A_\varphi^{z^2}z=1+2z,
 \qquad [A_\varphi^{z^2}]_{\{1,z\}}=
 \begin{pmatrix}2&1\\1&2\end{pmatrix}.
\]
More generally, the truncated Toeplitz operators on $K_{z^N}$ are matrices constant along each diagonal. The symbol $\varphi+z^2+\bar z^2$ gives the same operator on $K_{z^2}$: those extra terms disappear under projection.

A \emph{bounded symbol} for $A\in\TT(\Theta)$ is a function $\psi\in L^\infty(m)$ with $A=A_\psi^\Theta$. The bounded symbol property requires such a representative for every $A\in\TT(\Theta)$, although the original symbol need not itself be bounded. Such representatives always exist in finite dimensions.

For comparison, bounded classical Toeplitz operators on $H^2$ have bounded symbols with equality of the two norms \cite{BH}. Earlier positive cases for truncated operators include Sarason's bounded analytic representatives for bounded operators with analytic symbols \cite{Sarason1967} and Rochberg's theorem for the Paley-Wiener space \cite{Rochberg}.

Sarason's question has a negative answer in general: Baranov, Chalendar, Fricain, Mashreghi and Timotin constructed bounded truncated Toeplitz operators without bounded symbols and conjectured that the bounded symbol property holds exactly for one-component inner functions \cite[Theorem~5.3 and Remark~7.3]{BCFMT}.

An inner function is \emph{one-component}, written $\Theta\in(\CLS)$, if
\[
 \Omega_t(\Theta)=\{z\in\D:|\Theta(z)|<t\}
\]
is connected for some $0<t<1$.

Cohn introduced this class through embedding measures \cite{Cohn}; Aleksandrov characterized it by uniform reproducing kernel estimates \cite{Aleksandrov}. For this class, Cohn's work \cite{Cohn1986} also gives bounded symbols for bounded truncated Toeplitz operators with nonnegative $L^2$ symbols; see \cite[Theorem 7.1]{BCFMT}.

Baranov, Bessonov and Kapustin proved that the property is equivalent to conditions on embedding measures and weak factorization \cite[Theorem~2.4]{BBK}. Their Corollary~2.5 establishes sufficiency of the one-component condition, and their Conjecture~2.6 states the remaining necessity direction \cite{BBK}.

The \emph{reproducing kernel} $k_w^F$ represents point evaluation: $f(w)=\ip{f}{k_w^F}$ for $f\in K_F$. For a nonconstant inner $F$, it and its \emph{conjugate kernel} are
\[
 k_w^F(z)=\frac{1-\overline{F(w)}F(z)}{1-\overline w z},
 \qquad \widetilde k_w^F(z)=\frac{F(z)-F(w)}{z-w},
\]
with value $F'(w)$ at $z=w$ in the second quotient. Their relation is explained in Subsection~\ref{subsec:least-symbol}. For nonzero vectors $f,g$, we use the operator of rank one $(f\otimes g)h=\ip{h}{g}f$. We put $R_w=\widetilde k_w^\Theta\otimes k_w^\Theta$. Thus $R_wf=f(w)\widetilde k_w^\Theta$: the operator depends on its input only through its value at $w$. Each $R_w$ has a bounded symbol \cite[Theorem 5.1]{Sarason}. The issue is whether the symbol norms can be bounded relative to $\norm{R_w}$ uniformly in $w$.

We measure the least bounded symbol norm by
\[
 b_\Theta(A)=\inf\{\norm\psi_\infty:
       \psi\in L^\infty(m),\ A_\psi^\Theta=A\},
\]
with value $+\infty$ when no bounded symbol exists, and put
\begin{equation}\label{eq:symbol-constants}
 M_*(\Theta)=\sup_{0\ne A\in\TT(\Theta)}
       \frac{b_\Theta(A)}{\norm A},\qquad
 R_*(\Theta)=\sup_{w\in\D}
       \frac{b_\Theta(R_w)}{\norm{R_w}}.
\end{equation}
We have $\norm{A_\psi^\Theta}\le\norm\psi_\infty$, and $1\le R_*\le M_*$. $R_*$ is the rank-one symbol constant, while $M_*$ is the general symbol constant.

The bounded symbol property is equivalent to $M_*(\Theta)<\infty$, by the open mapping theorem; see the proof of Theorem~\ref{thm:main}.

\begin{theorem}[Characterization of one-component model spaces] \label{thm:main} For a nonconstant inner function $\Theta$, the following are equivalent: \begin{enumerate} \item[(i)] $\Theta$ is one-component. \item[(ii)] Every operator in $\TT(\Theta)$ has a bounded symbol. \item[(iii)] $R_*(\Theta)<\infty$. \end{enumerate} \end{theorem}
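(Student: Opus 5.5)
The plan is to prove (i)$\Rightarrow$(ii)$\Rightarrow$(iii)$\Rightarrow$(i).

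\emph{(i)$\Rightarrow$(ii).} This is known. Baranov, Bessonov and Kapustin \cite[Corollary~2.5]{BBK} show that one-component inner functions have the bounded symbol property. We cite their result.

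\emph{(ii)$\Rightarrow$(iii).} This step is soft. The set $\TT(\Theta)$ is weak-operator closed in the bounded operators on $K_\Theta$ (Sarason), so it is a Banach space. Put $N_\Theta=\{\psi\in L^\infty: A^\Theta_\psi=0\}$; it is closed. The map $L^\infty/N_\Theta\to\TT(\Theta)$, $\psi\mapsto A^\Theta_\psi$, is a bounded injection. Under (ii) it is also surjective, so the open mapping theorem gives a constant $C$ with $b_\Theta(A)\le C\norm A$. Hence $M_*(\Theta)<\infty$, and therefore $R_*\le M_*<\infty$. The same argument gives the equivalence of the bounded symbol property with $M_*<\infty$, which is remarked before the theorem.

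\emph{(iii)$\Rightarrow$(i).} This is the substantial direction, and I would proceed in three steps.

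First, compute $\norm{R_w}=\norm{\widetilde k_w}\norm{k_w}$. Both factors equal $\bigl((1-|\Theta(w)|^2)/(1-|w|^2)\bigr)^{1/2}$, since conjugation $C_\Theta$ maps $k_w$ to $\widetilde k_w$. So $\norm{R_w}=(1-|\Theta(w)|^2)/(1-|w|^2)$.

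Second, bound $b_\Theta(R_w)$ from below by duality. The predual of $\TT(\Theta)$ is a space of sums $\sum f_j\bar g_j$ with $f_j,g_j\in K_\Theta$, normed by a projective-type norm. Any $\psi$ with $A_\psi=R_w$ satisfies $\int\psi f\bar g\,dm=\ip{R_w g}{f}$. I would test against the function $h=k_w\,\overline{\widetilde k_w}$. After multiplying by $\Theta$ and using $C_\Theta$, this becomes $\bar\Theta\,\overline{(1-\overline{\Theta(w)}\Theta)}\,(\Theta-\Theta(w))/(z-w)$-type expressions. I would choose a test function $f\bar g$ whose $L^1$ norm is small while $\ip{R_wg}{f}$ is large, for instance $f=g=k_w$ or derivative kernels $\partial k_w$. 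This should give an estimate of the form
\[ b_\Theta(R_w)\gtrsim \frac{(1-|\Theta(w)|^2)}{(1-|w|^2)}\cdot \frac{1-|\Theta(w)|^2}{(1-|w|^2)|\Theta'(w)|}\quad\text{or similar}, \]
valid when $|\Theta(w)|$ is near $1$. The conclusion would then be that $R_*<\infty$ forces a lower bound on the hyperbolic derivative,
\[ \frac{(1-|w|^2)|\Theta'(w)|}{1-|\Theta(w)|^2}\ \ge\ c(R_*)>0\quad\text{whenever } \delta\le|\Theta(w)|<1. \]

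Third, show that this derivative lower bound implies connectedness of some $\Omega_t$. Pick levels $\delta<t<1$ and consider any component $U$ of $\Omega_t$. Follow the gradient flow of $-\log|\Theta|$, or equivalently the inverse paths of $\Theta$ along radial segments in the target. The derivative bound says $\Theta$ expands hyperbolic length uniformly, so a radial segment in the target from $|\Theta|=t$ down to $|\Theta|=\delta$ lifts to a path of bounded hyperbolic length. From this, every component of $\Omega_t$ should reach $\Omega_\delta$ within a bounded hyperbolic distance. I would then combine this with Aleksandrov's criterion: $\Theta\in(\CLS)$ iff $\Omega_{t'}$ is connected for some $t'$, iff a uniform comparison $\sup_{\Omega_t}$ of kernel norms holds. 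An alternative is to glue components directly through the flow lines, showing that two components of $\Omega_t$ lie in one component of $\Omega_{t'}$ for some $t'\in(t,1)$.

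The main obstacle is the second step: producing a test function that extracts the derivative $|\Theta'(w)|$ from $b_\Theta(R_w)$ with the right normalization. The flow argument must also be made to handle zeros of $\Theta'$ and singular points. Of the two, I expect the duality lower bound to be the hardest part.
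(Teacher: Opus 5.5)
\textbf{Where the proposal falls short.} Your arguments for (i)$\Rightarrow$(ii) and (ii)$\Rightarrow$(iii) are fine and match the paper, and your formula $\norm{R_w}=(1-|\Theta(w)|^2)/(1-|w|^2)$ is correct. The gap is in the second step of (iii)$\Rightarrow$(i), which is the heart of the theorem.

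\textbf{The derivative bound.} You have no mechanism that extracts $\Theta'(w)$ from $b_\Theta(R_w)$, and the estimate you guess, $b_\Theta(R_w)\gtrsim\norm{R_w}/\eta_\Theta(w)$, is false. Take $\Theta_a(z)=(a-z^2)/(1-az^2)$ at $w=0$. There $\eta_{\Theta_a}(0)=0$, and $|\Theta_a(0)|=a$ exceeds any prescribed $\delta<1$. Yet $K_{\Theta_a}$ is two-dimensional, so $b_{\Theta_a}(R_0)<\infty$. Your suggested test $f=g=k_w$ only gives the trivial bound $b_\Theta(R_w)\ge\norm{R_w}$. The right quantity is $1-\eta^2$ measured against $1-|\Theta(w)|^2$, and the paper obtains it in two moves.

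First, conjugation by $U_wf=h_w(f\circ\tau_w)$ preserves operator norms and least symbol norms. It sends $R_w$ to a multiple of $R_0$ for $F=\Theta\circ\tau_w$, so it suffices to work at $0$.

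Second, $\norm\psi_\infty\ge\norm\psi_2$, and the least $L^2$ symbol norm of $R_0$ can be computed exactly. This is possible because the null symbols are precisely $FH^2+\overline{FH^2}$ (Izuchi--Sarason). Put $a=F(0)$, $D=1-|a|^4$, $u=\bar z(F-a^2\bar F)$ and $v=\bar F-\bar a^{2}F$. The symbol $u-2aF'(0)v/D$ represents $R_0$ and is orthogonal to the null space. Its squared norm is $D-4|a|^2|F'(0)|^2/D$. Comparing this with $b_F(R_0)\le R_*(1-|a|^2)$ gives
$\eta_\Theta(w)^2\ge1-R_*^2(1-|\Theta(w)|^2)/(1+|\Theta(w)|^2)$.

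A duality argument of the kind you sketch can also work; the paper does this in Lemma~\ref{lem:log-derivative}, using $|H(0)|\le b_\Theta(R_0)\norm H_1$ for sums of products $H=\sum f_jg_j$. It needs a specific test function, namely a three-product combination built with the Crofoot multiplier. Without such a test, your step two is not a proof.

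\textbf{The topological step.} Your third step has the right idea: lift radial paths, bound their hyperbolic length, and use completeness. Your worry about zeros of $\Theta'$ disappears, because one only flows inside $\{|\Theta|>r\}$, where $\eta_\Theta\ge c>0$ forces $\Theta'\ne0$. Also note that $\Theta$ never expands hyperbolic length; the bound only says it contracts by at most the factor $c$.

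However, the way you extract connectedness does not work as stated. Knowing that each component of $\Omega_t$ reaches $\Omega_\delta$ within bounded distance does not join different components. It is also unclear how your flow information would yield Aleksandrov's kernel estimate.

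The working argument flows the whole disk. Fix $r<s<1$. For $|\Theta(z)|>s$, solve $\gamma'=-\Theta(\gamma)/\Theta'(\gamma)$ with $\gamma(0)=z$, so that $\Theta(\gamma(u))=e^{-u}\Theta(z)$. The hyperbolic length of $\gamma$ is at most $c^{-1}$ times that of the radial segment from $|\Theta(z)|$ down to $s$. By completeness, the trajectory therefore reaches level $s$. Stop it there, and fix the points of $E_s=\{|\Theta|\le s\}$. This gives a continuous retraction of $\D$ onto $E_s$; continuity across $\{|\Theta|=s\}$ comes from the same length bound. Hence $E_s$ is connected, as a continuous image of $\D$. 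Since $|\Theta|$ decreases along trajectories, each $\Omega_t$ with $t>s$ retracts onto $E_s$ and is therefore connected.
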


The implication (i) $\Rightarrow$ (ii) is due to Baranov, Bessonov and Kapustin \cite[Corollary~2.5]{BBK}. We prove (iii) $\Rightarrow$ (i): if $b_\Theta(R_w)/\norm{R_w}$ is bounded uniformly in $w$, then $\Theta$ has a connected sublevel set.

We will use the Blaschke product of degree two $\Theta_a(z)=(a-z^2)/(1-az^2)$, $0<a<1$, whose zeros are $\pm\sqrt a$. Its only critical point is zero, with critical value $a$.

The sublevel set $\Omega_t(\Theta_a)$ has two components for $0<t\le a$ and is connected for $a<t<1$. Thus a one-component inner function can have disconnected sublevel sets at lower levels. We give the calculation in Subsection~\ref{sec:geometry}.

We also compare the best symbol bound for this rank-one family with the best bound for all bounded truncated Toeplitz operators. Theorem~\ref{thm:sharp-rank-one} makes this reduction quantitative: whenever $R_*(\Theta)<\infty$,
\[
 R_*(\Theta)\le M_*(\Theta)
 \le C R_*(\Theta)\log(e+R_*(\Theta)),
\]
where $C$ is an absolute constant. The logarithmic factor is necessary: our $N$-dimensional examples have $R_*$ of order $N/\log N$ and $M_*$ of order $N$.

The same geometric condition answers two questions in harmonic analysis. Through the equivalences of Baranov, Bessonov and Kapustin~\cite[Theorem~2.4]{BBK}, it determines when the measures giving bounded embeddings at exponents $1$ and $2$ coincide for the model spaces associated with the square of the inner function, and when the corresponding subspace of $H^1$ admits weak factorization into products of model space functions; see Theorem~\ref{thm:full-characterization}. These are quantitative statements: the factorization controls the sum of the factors' $H^2$ norm products by the $H^1$ norm of the original function.

In de Branges spaces, the corresponding question is whether changing the norm of a space of entire functions amounts to reweighting its original integral. Such norm changes arise in spectral theory~\cite[Section~7.5]{PoltoratskiOrder}. Equivalence of two norms compares their sizes but clearly does not provide a bounded density representing one relative to the other. We show that every sufficiently small equivalent change of norm preserving the de Branges structure has an exact representation by a positive density bounded above and away from zero if and only if the associated inner function is one-component. It suffices to require this representation for every discrete sampling norm sufficiently close to the original norm. In the one-component case, if a weighted sum of squared point values approximates the original squared norm with sufficiently small relative error uniformly over the space, then we can choose a single positive weight, bounded above and away from zero, so that the reweighted integral equals this sum for every function in the space.

On every model space outside the one-component class, we construct self-adjoint trace class truncated Toeplitz operators without bounded symbols, with arbitrarily small norm and any prescribed positive upper bounds on their singular values. Thus no rate of singular value decay ensures the existence of a bounded symbol on such a space. The corresponding de Branges norms may be arbitrarily close to the original norm and still have no bounded representing density.

The search for symbol criteria also motivated our earlier work on Bargmann--Fock space~\cite{LooiBC}, where the heat transform conditions proposed in the Berger--Coburn boundedness conjecture~\cite{BergerCoburn} and the compactness conjecture of Bauer, Coburn and Isralowitz~\cite[Conjecture~2]{BCI} fail to characterize the corresponding operator properties. We discuss the connection with our compact examples in Remark~\ref{rem:BC-connection}.

\subsection{Equivalent conditions}
We state the full set of equivalences, connecting the geometry of $\Theta$ with operator symbols, embedding measures and weak factorization. More precisely, we give conditions on operator symbols, embedding measures and weak factorization that are equivalent to $\Theta$ being one-component.

For a holomorphic map $G:\D\to\D$, we write
\[
 \eta_G(z)=\frac{(1-|z|^2)|G'(z)|}{1-|G(z)|^2}
\]
for its \emph{hyperbolic derivative}. The Schwarz--Pick lemma gives $0\le\eta_G\le1$.

The classes $\mathcal C_p(\Phi)$ consist of measures $\mu$ for which the inclusion $K_\Phi^p\hookrightarrow L^p(|\mu|)$ is bounded; precise definitions are in Section~\ref{sec:model-space-consequences}. 

If $f,g\in K_\Theta$, then $fg$ belongs to $H^1\cap\bar z^{\,2}\Theta^2\overline{H^1}$, where we identify Hardy space functions with their boundary values on $\T$. The weak factorization statement in Theorem~\ref{thm:full-characterization}(vii) says that every $F$ in this space is a sum of four such products. The factors can be chosen so that $\sum_{j=1}^4\norm{f_j}_2\norm{g_j}_2\le C_\Theta\norm F_1$, with a constant independent of $F$.

\begin{theorem}[Geometric and analytic characterizations]\label{thm:full-characterization} 
For a nonconstant inner function $\Theta$, the following are equivalent: \begin{enumerate} \item[(i)] $\Theta$ is one-component. \item[(ii)] Every operator in $\TT(\Theta)$ has a bounded symbol. \item[(iii)] $R_*(\Theta)<\infty$. \item[(iv)] Every self-adjoint trace class operator in $\TT(\Theta)$ has a bounded symbol. \item[(v)] (Quadratic criterion) $\displaystyle \sup_{z\in\D}\frac{1-\eta_\Theta(z)^2}{(1-|\Theta(z)|^2)^2}<\infty$. \item[(vi)] $\mathcal C_1(\Theta^2)=\mathcal C_2(\Theta^2)$. \item[(vii)] There is $C_\Theta>0$ such that every $F\in H^1\cap\bar z^{\,2}\Theta^2\overline{H^1}$ admits
\[
 F=\sum_{j=1}^4 f_jg_j,\qquad f_j,g_j\in K_\Theta,\qquad
 \sum_{j=1}^4\norm{f_j}_2\norm{g_j}_2\le C_\Theta\norm F_1.
\]
\item[(viii)] Every compact operator in $\TT(\Theta)$ has a symbol in $\Theta C(\T)$. 
\end{enumerate} \end{theorem}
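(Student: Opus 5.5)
We will organize the proof around the core equivalences (i)$\Leftrightarrow$(ii)$\Leftrightarrow$(iii) of Theorem~\ref{thm:main}, and attach each remaining condition to this cycle.

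\textbf{Immediate implications and imported equivalences.} The implication (ii)$\Rightarrow$(iv) is trivial. The equivalences (ii)$\Leftrightarrow$(vi)$\Leftrightarrow$(vii) are exactly \cite[Theorem~2.4]{BBK}. For (vii) we only need to check that the normalization $\bar z^{\,2}\Theta^2\overline{H^1}$ and the four-term, norm-controlled form agree with theirs; the constant $C_\Theta$ comes from the open mapping theorem applied to the surjective product map $K_\Theta\widehat\otimes K_\Theta\to H^1\cap\bar z^2\Theta^2\overline{H^1}$. For (viii) we argue as follows. If $\Theta\in(\CLS)$, a compact $A\in\TT(\Theta)$ is a norm limit of finite-rank truncated Toeplitz operators, for instance via a Fej\'er-type approximation in the duality with $H^1\cap\bar z^2\Theta^2\overline{H^1}$. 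Finite-rank ones have symbols built from kernels, which lie in $\Theta C(\T)$ modulo the symbol ambiguity $\Theta H^2+\overline{\Theta H^2}$. We then use the uniform bound $M_*<\infty$ to sum a telescoping series of symbols with geometrically decreasing sup norms. For the converse (viii)$\Rightarrow$(iv), note that a compact self-adjoint trace class operator is compact, and $\Theta C(\T)\subset L^\infty$.

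\textbf{The geometric core.} This consists of (iii)$\Rightarrow$(v)$\Rightarrow$(i), together with the reverse (i)$\Rightarrow$(v) from Aleksandrov's kernel estimates. For (iii)$\Rightarrow$(v) we compute $\norm{R_w}=\norm{\widetilde k_w}\norm{k_w}=\norm{k_w}^2=(1-|\Theta(w)|^2)/(1-|w|^2)$. We then bound $b_\Theta(R_w)$ from below by duality: $b_\Theta(A)$ equals the norm of the functional $A\mapsto\sum\ip{Af_j}{g_j}$ on the predual $H^1\cap\bar z^2\Theta^2\overline{H^1}$. We test it against $F=\widetilde k_w\, k_w$ (or a suitably normalized product of kernels), whose $H^1$ norm can be computed, or bounded above, in terms of $\eta_\Theta(w)$ and $|\Theta(w)|$. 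The expected outcome is
\[
 \frac{b_\Theta(R_w)}{\norm{R_w}}\gtrsim \frac{(1-\eta_\Theta(w)^2)^{1/2}}{1-|\Theta(w)|^2},
\]
which is (v) after squaring.

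For (v)$\Rightarrow$(i), condition (v) says that $\eta_\Theta\ge 1-C(1-|\Theta|)^2$ wherever $|\Theta|$ is close to $1$. We then run the flow suggested in the introduction. Take two points of $\Omega_t$, follow the gradient flow of $|\Theta|$ downward (or lift hyperbolic geodesics in the target disk through $\Theta$), and use the near-isometry $\eta_\Theta\approx1$ to show that inverse paths have controlled hyperbolic length. Consequently, components of $\Omega_t$ for $t$ near $1$ cannot stay separated: any two of them must be joined within $\Omega_{t'}$ for a slightly larger $t'<1$.

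\textbf{The main obstacle.} The hard step is (iv)$\Rightarrow$(i): failure of one-component geometry must produce \emph{self-adjoint trace class} counterexamples. The plan is to use the negation of (v), or of (iii), to find points $w_n$ where the ratio in (iii) blows up. At these points we form self-adjoint combinations such as $R_{w_n}+R_{w_n}^*$, which are rank at most two and are self-adjoint truncated Toeplitz operators. We rescale them so that their norms are summable but their least symbol norms are large, and we choose the $w_n$ sparse enough that the sum $\sum c_nA_n$ is trace class. To derive a contradiction from the open mapping theorem on the Banach space of self-adjoint trace class operators in $\TT(\Theta)$ with the trace norm, we need the lower bound on $b_\Theta$ to survive symmetrization, that is, a dual test function that detects both terms. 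This is the point where I expect the most work. Once it is in place, (iv) forces a uniform bound, hence (v), hence (i), which closes the cycle.
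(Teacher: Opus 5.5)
Your route for the new direction breaks at (iii)$\Rightarrow$(v). Your plan for (iv)$\Rightarrow$(i) also passes through this step, so nothing in the proposal gets from (iii) or (iv) back to (i).

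\textbf{The main gap.} The pointwise bound you expect, $b_\Theta(R_w)/\norm{R_w}\gtrsim(1-\eta_\Theta(w)^2)^{1/2}/(1-|\Theta(w)|^2)$ with absolute constants, is false. Take $\Theta_a(z)=(a-z^2)/(1-az^2)$ and $w=0$. Then $\eta_{\Theta_a}(0)=0$ and $\norm{R_0}=1-a^2$, so your bound would force $b_{\Theta_a}(R_0)\gtrsim1$. But the weighted polynomial duality in Theorem~\ref{thm:sharp-rank-one} (with $N=2$) gives $R_*(\Theta_a)\sim 1/\bigl((1-a)\log\frac1{1-a}\bigr)$, hence $b_{\Theta_a}(R_0)\lesssim 1/\log\frac1{1-a}\to0$. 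Equivalently, the supremum in (v) can exceed $R_*^2$ by a factor of order $\log^2R_*$, so no estimate of the form ``(v) after squaring'' holds.

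What testing $R_w$ does deliver is a defect bound \emph{linear} in $1-|\Theta(w)|^2$. Minimize the $L^2$ norm of the symbol $\bar zF$, $F=\Theta\circ\tau_w$, modulo $FH^2+\overline{FH^2}$ (Proposition~\ref{prop:least-symbol}), and use invariance of $b_\Theta$ under disk automorphisms. This gives $1-\eta_\Theta^2\le R^2(1-|\Theta|^2)/(1+|\Theta|^2)$. That is weaker than (v) but is all you need: it yields $\eta_\Theta\ge c>0$ on $\{|\Theta|>r\}$, and your flow argument then proves (i) directly, bypassing (v).

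The quadratic rate must then be obtained from (i). Your appeal to Aleksandrov's kernel estimates does not visibly give a lower bound for $\eta_\Theta$ at that rate. The paper instead uses that $\Theta$ covers an annulus $A_r$ above a high level \cite[Theorem~7.2]{IU}. It then compares hyperbolic densities, $\lambda_{A_r}(\Theta(z))|\Theta'(z)|=\lambda_V(z)\ge\lambda_\D(z)$, and expands the explicit annulus density.

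\textbf{Two smaller points.} First, the symmetrization obstacle you flag in (iv)$\Rightarrow$(i) disappears by subadditivity. Since $b_\Theta(R)\le b_\Theta(\operatorname{Re}R)+b_\Theta(\operatorname{Im}R)$, one of the two self-adjoint parts of rank at most two carries at least half the cost; using $R+R^*$ alone need not suffice. Now apply the open mapping theorem to $\psi\mapsto A_\psi^\Theta$, from real $\psi\in L^\infty$ with $A_\psi^\Theta\in S_1$ (normed by $\norm\psi_\infty+\norm{A_\psi^\Theta}_{S_1}$) onto the self-adjoint trace class part of $\TT(\Theta)$. Together with $\norm T_{S_1}\le2\norm T$ for rank two, this already gives (iv)$\Rightarrow$(iii), with no sparse sequence or special dual test function. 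The paper uses a Baire argument instead, because it wants the stronger Theorem~\ref{thm:arbitrary-decay}.

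Second, your telescoping argument for (i)$\Rightarrow$(viii) does not work as written. The bound $M_*<\infty$ only produces symbols in $L^\infty$ of controlled norm, not symbols in $\Theta C(\T)$, so the summed symbol need not lie in $\Theta C(\T)$. Keeping the terms in $\Theta C(\T)$ with norm control would require a separate duality argument. Moreover, the finite-rank approximation you start from essentially rests on \cite[Corollary~3.9]{OLoughlin}. The paper simply cites \cite[Theorem~3.2]{OLoughlin} for (ii)$\Leftrightarrow$(viii), and you should do the same.
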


The equivalences of (ii) with (vi)--(vii) are due to \cite[Theorem~2.4 and Proposition~4.1]{BBK}; that with (viii) is due to \cite[Theorem~3.2]{OLoughlin}. Theorems~\ref{thm:main} and~\ref{thm:quadratic} supply (i), (iii) and (v). For (iv), Theorem~\ref{thm:arbitrary-decay} proves more: outside the one-component class, self-adjoint trace class operators without bounded symbols exist with arbitrarily small norm and any prescribed positive singular value bounds. Thus, on such a model space, arbitrarily rapid decay of singular values does not guarantee a bounded symbol.

The quadratic criterion connects the topology of a sublevel set with a uniform quadratic rate of approach to equality in the Schwarz--Pick lemma as $|\Theta(z)|\to1$. The exponent is optimal, already for $\Theta(z)=z^2$; see Theorem~\ref{thm:quadratic}.

Weak factorization explains how operator norm bounds yield bounds for symbols: it gives an $L^1$ estimate for the functional associated with a bounded truncated Toeplitz operator, from which Hahn--Banach and $L^1$--$L^\infty$ duality produce a bounded symbol \cite[Theorem~2.3]{BBK}. The quantitative form of this argument is used in Appendix~\ref{app:annular-symbol}.

\subsection{Proof strategy}

We explain how the assumption $R:=R_*(\Theta)<\infty$ forces a connected sublevel set, proving (iii) $\Rightarrow$ (i) in Theorem~\ref{thm:main}. The symbol estimate gives a positive lower bound for the hyperbolic derivative where $|\Theta|$ is close to one. We use this bound to construct a continuous retraction of the disk onto a closed sublevel set, which shows it is connected.

We use the hyperbolic metric on $\D$, in which an infinitesimal displacement $dz$ at $z$ has length
\[
 \frac{|dz|}{1-|z|^2}.
\]
A holomorphic map $G:\D\to\D$ sends this displacement to $G'(z)\,dz$ at $G(z)$. The ratio of the image length to the original length is $\eta_G(z)$. Thus $\eta_G$ plays the role of $|G'|$ when lengths are measured in the hyperbolic metric rather than the Euclidean metric. The Schwarz--Pick bound $\eta_G\le1$ says that holomorphic disk maps cannot increase hyperbolic lengths. Disk automorphisms preserve these lengths and have $\eta_G\equiv1$.

Under condition (ii) of Theorem~\ref{thm:main}, the rank-one argument of \cite[Theorem~5.6]{BCFMT} bounds each kernel's $L^p$ norm relative to its squared $L^2$ norm, uniformly in the evaluation point for each fixed finite $p>2$. That argument did not establish the corresponding $L^\infty$ bound, which characterizes the one-component property \cite[Theorem~1.2]{Aleksandrov}.

We first compute the least $L^2$ symbol norm for a rank-one test operator by orthogonal projection. Since $m$ is normalized, every bounded symbol $\psi$ satisfies $\norm\psi_2\le\norm\psi_\infty$, so the minimum is a lower bound for the $L^\infty$ norm of every bounded representative. A disk automorphism moves the evaluation point to zero without changing the normalized rank-one symbol estimate (Lemma~\ref{lem:automorphisms}). The exact minimum in Proposition~\ref{prop:least-symbol} depends on the value and derivative of the transformed inner function at zero.

Returning to the original evaluation point and comparing with the uniform symbol bound, we obtain
\begin{equation*}
 \eta_\Theta(w)^2\ge 1-R^2\frac{1-|\Theta(w)|^2}{1+|\Theta(w)|^2}
 \qquad(w\in\D)\,.
\end{equation*}
For fixed $R$, the right-hand side tends to one as $|\Theta(w)|\to1$. Thus the symbol estimate prevents severe contraction of hyperbolic lengths in this region. In particular, we can choose $0<r<1$ and $c>0$ such that $\eta_\Theta\ge c$ on $\{|\Theta|>r\}$; the calculation is given in Proposition~\ref{prop:rank-one-derivative}.

To recover connectedness, we fix $r<s<1$ and move points into $E_s:=\{z\in\D:|\Theta(z)|\le s\}$. Points already in $E_s$ stay fixed. For $z\notin E_s$, the flow
\begin{equation*}
 \gamma_z'(u)=-\frac{\Theta(\gamma_z(u))}{\Theta'(\gamma_z(u))},
 \qquad \gamma_z(0)=z
\end{equation*}
is defined locally because $\Theta'$ does not vanish on $\{|\Theta|>r\}$. Along the trajectory, the chain rule gives
\begin{equation*}
 \Theta(\gamma_z(u))=e^{-u}\Theta(z)\,,
\end{equation*}
so its image moves radially inward. It remains to show that the lifted trajectory exists until it reaches level $s$, at time $\log(|\Theta(z)|/s)$. The estimate $\eta_\Theta\ge c$ bounds the hyperbolic length of the lifted trajectory by $c^{-1}$ times that of its radial image, which is finite. If the solution ceased to exist before reaching $E_s$, completeness would give a limit point in $\D$. At this point $|\Theta|\ge s>r$, so $\Theta'\ne0$ and the solution could be continued, so the trajectory reaches $E_s$.

Stopping each trajectory at level $s$ gives a continuous retraction of the disk onto $E_s$, so $E_s$ is connected. For $s<t<1$, every point of $\Omega_t(\Theta)$ can be joined to $E_s$ along one of these trajectories, which stays in $\Omega_t(\Theta)$ because $|\Theta|$ decreases. Hence $\Omega_t(\Theta)$ is connected as well. The construction applies more generally to any holomorphic self-map $G$ of the disk with $\eta_G\ge c>0$ on $\{|G|>r\}$ for some $0\le r<1$: every sublevel set $\{|G|<t\}$ with $r<t<1$ is contractible. Lemma~\ref{lem:geometry} gives the full statement and proof in Section~\ref{sec:symbol}.

For the quantitative bound, Section~\ref{sec:quantitative} sharpens the derivative estimate; Appendix~\ref{app:annular-symbol} bounds the general symbol constant by a constant times the reciprocal width of a covering annulus.

\subsection{De Branges spaces and sampling}

We also apply our characterization to de Branges spaces $\mathcal H(E)$, Hilbert spaces of entire functions with norm
$$
\norm{F}_E^2=\int_{\mathbb R}\frac{|F(x)|^2}{|E(x)|^2}\,dx.
$$
Here $E$ has no real zeros, and the associated inner function in the upper half plane is $\Theta_E(z)=\overline{E(\bar z)}/E(z)$. The choice $E(z)=e^{-iaz}$, $a>0$, gives the classical Paley--Wiener space of entire extensions of square integrable bandlimited functions. The defining conditions on $E$ and the identification with a model space are given in Section~\ref{sec:spectral-applications}.

Changes of norm on a fixed space of entire functions arise in spectral theory \cite[Section~7.5]{PoltoratskiOrder}. We call an equivalent norm $\norm{\cdot}_*$ on $\mathcal H(E)$ \emph{admissible} if $\mathcal H(E)$, equipped with this norm and with the same entire functions, is again a de Branges space. 

For $\mathcal H(E)\ne\{0\}$, an equivalent Hilbert norm is represented by a bounded positive invertible operator $A$ on the associated model space. Admissibility is precisely the condition that $A$ be a truncated Toeplitz operator (Proposition~\ref{prop:spectral-norm-correspondence}). Thus, for an admissible norm with $\Theta_E$ one-component, $A-I$ has a bounded symbol. Since $A-I$ is self-adjoint, we can take the symbol $b$ to be real, and
\begin{equation*}
 \norm{F}_*^2-\norm{F}_E^2
 =\int_{\mathbb R}b(x)\frac{|F(x)|^2}{|E(x)|^2}\,dx
 \qquad(F\in\mathcal H(E)).
\end{equation*}
The weight $1+b$ therefore represents the new norm. By the uniform symbol estimate, we can choose $b$ with $\norm b_\infty<1$ when $\norm{A-I}$, the relative change in the squared norm, is sufficiently small. This gives a weight bounded above and away from zero.

We ask whether an admissible norm can be represented by reweighting the original integral:
$$
\norm{F}_*^2
=\int_{\mathbb R}w(x)\frac{|F(x)|^2}{|E(x)|^2}\,dx
\qquad(F\in\mathcal H(E)).
$$
Here, one uses every $F\in\mathcal H(E)$. A real measurable function $w$ satisfying this identity is called a \emph{representing density}; we seek one with $0<c\le w\le C<\infty$ almost everywhere. 

For $\mathcal H(E)\ne\{0\}$, Theorem~\ref{thm:spectral-dichotomy} shows that every sufficiently small admissible norm change has a representing density with these bounds if and only if $\Theta_E$ is one-component. In the one-component case, a sufficiently small relative change $d$ in the squared norm admits a representing density $w$ satisfying $\|w-1\|_\infty\le C_Ed$, where $C_E$ depends only on the original space.

If $\Theta_E$ is not one-component, there are arbitrarily small admissible norm changes with no bounded representing density. These counterexamples can be chosen in the form $\norm{F}_*^2=\ip{(I+K)F}{F}_E$, where $K$ is self-adjoint and trace class, has arbitrarily small operator norm, and satisfies any prescribed positive (upper) bounds on its singular values.

In the application to sampling, we state an equivalent criterion in terms of discrete norms. Corollary~\ref{cor:discrete-spectral-weights} shows that $\Theta_E$ is one-component if and only if every discrete sampling norm $\norm{F}_*^2=\sum_n\gamma_n|F(\lambda_n)|^2$ sufficiently close to the original norm admits a representing density with $0<c\le w\le C<\infty$. Here $\lambda_n\in\mathbb R$ and $\gamma_n>0$. In the one-component case, the conclusion converts the sampling inequalities into an exact integral identity, with one representing density valid throughout the space. The converse uses de Branges' theorem on orthogonal bases of reproducing kernels, which gives a discrete sampling representation for every admissible norm. The connection with the sampling and interpolation results of Marzo, Nitzan and Olsen \cite{MNO} comes from their phase criterion for the one-component property; we refer to Remark~\ref{cor:phase}.

\section{From rank-one symbols to one-component geometry} \label{sec:symbol}

\subsection{Proof of the main characterization}

We begin by stating the operator estimate and the geometric lemma, and use them to prove Theorem~\ref{thm:main}. Their proofs then follow in the rest of this section. The operator estimate controls how much $\Theta$ can contract hyperbolic lengths where $|\Theta|$ is close to one.

\begin{proposition}[Rank-one control of the hyperbolic derivative] \label{prop:rank-one-derivative} For a nonconstant inner function $\Theta$, assume $R=R_*(\Theta)<\infty$. Then, for every $w\in\D$,
\begin{equation}\label{eq:main-derivative}
 \eta_\Theta(w)^2\ge
 1-R^2\frac{1-|\Theta(w)|^2}{1+|\Theta(w)|^2}.
\end{equation}
\end{proposition}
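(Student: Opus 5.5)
The plan is to reduce to the evaluation point $w=0$ by a disk automorphism, compute there the least $L^2$ norm of a symbol of $R_0$, and use $\norm\psi_2\le\norm\psi_\infty$ to turn this into a lower bound for $b_\Theta(R_0)$. Comparing with $R\norm{R_0}$ then gives \eqref{eq:main-derivative}.

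\emph{Reduction.} Fix $w\in\D$ and put $\tau_w(z)=(w-z)/(1-\bar wz)$ and $\Theta_w=\Theta\circ\tau_w$. This is again a nonconstant inner function. The composition operator $f\mapsto (f\circ\tau_w)\,\tau_w'^{1/2}$ (suitably normalized) is a unitary map $K_\Theta\to K_{\Theta_w}$. It carries truncated Toeplitz operators with symbol $\psi$ to those with symbol $\psi\circ\tau_w$, which preserves $\norm\psi_\infty$, and it carries $R_w$ to a unimodular multiple of $R_0^{\Theta_w}$. By Lemma~\ref{lem:automorphisms}, this gives
\[
\frac{b_\Theta(R_w)}{\norm{R_w}}=\frac{b_{\Theta_w}(R_0)}{\norm{R_0}}.
\]
Both $|\Theta_w(0)|=|\Theta(w)|$ and $\eta_{\Theta_w}(0)=\eta_\Theta(w)$ are also invariant, since $\eta$ is conformally natural. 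So it suffices to prove
\[
\frac{b_\Theta(R_0)^2}{\norm{R_0}^2}\ \ge\ (1-\eta_\Theta(0)^2)\,\frac{1+|a|^2}{1-|a|^2},
\qquad a=\Theta(0),\ b=\Theta'(0),
\]
where $\eta_\Theta(0)=|b|/(1-|a|^2)$. Indeed, with the hypothesis $b_\Theta(R_0)\le R\norm{R_0}$ this rearranges exactly to \eqref{eq:main-derivative}.

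\emph{Computation at the origin.} Here $k_0=1-\bar a\Theta$ and $\widetilde k_0=(\Theta-a)/z$. Conjugation $C_\Theta$ on $K_\Theta$ is an isometry exchanging the two kernels, so both have squared norm $1-|a|^2$ and
\[
\norm{R_0}=\norm{\widetilde k_0}\,\norm{k_0}=1-|a|^2.
\]
For the symbols, I would use Sarason's description: $A_\varphi^\Theta=0$ if and only if $\varphi\in\Theta H^2+\overline{\Theta H^2}$. Hence the least $L^2$ symbol norm is the distance from any one symbol $\varphi_0$ of $R_0$ to this closed subspace. For $\varphi_0$ I would take Sarason's explicit symbol for $\widetilde k_0\otimes k_0$, which is built from $\Theta$ and $\bar z$ (something like $\bar z(\Theta-a)$, corrected by a multiple of $\overline{\Theta}$). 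The minimizer is then $P_\Theta$-type projections of $\varphi_0$ onto $K_\Theta$ and $\overline{zK_\Theta}$. Since these subspaces are not orthogonal (their overlap is governed by $\Theta$), the relevant Gram computation involves only the inner products of $1$, $\Theta$, $\bar z$ against $\Theta$. That is, it involves only the Taylor coefficients $a$ and $b$, which is why Proposition~\ref{prop:least-symbol} depends only on these two numbers. I expect the result to be
\[
\min\norm\psi_2^2=(1-|a|^2)\,\frac{(1-|a|^2)^2-|b|^2}{1-|a|^4}\cdot(\text{normalization}),
\]
which after dividing by $\norm{R_0}^2$ is precisely the right-hand side above.

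\emph{Main obstacle.} The delicate step is the exact orthogonal projection: correctly identifying the complement of $\Theta H^2+\overline{\Theta H^2}$ relative to the chosen $\varphi_0$, and tracking the cross term between the analytic and antianalytic parts. This cross term is what produces the factor $1+|a|^2$ rather than $1$. I would first verify the formula on $\Theta=z^2$ and on $\Theta_a$, where both sides are explicit, and then carry out the general Gram computation. The rest is algebra: $b_\Theta\ge\min\norm\psi_2$ since $m$ is a probability measure, then substitute and rearrange.
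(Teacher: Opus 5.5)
Your framework matches the paper's proof. You reduce to $w=0$ via Lemma~\ref{lem:automorphisms}, bound $b_\Theta(R_0)$ below by the least $L^2$ symbol norm, and compare with $R\norm{R_0}=R(1-x)$, where $x=|a|^2$. Your reduction to $b_\Theta(R_0)^2/\norm{R_0}^2\ge(1-\eta^2)(1+x)/(1-x)$ is also correct. The gap is in the one substantive step: you do not compute the minimum, and the value you predict for it is false. If the normalized minimum were \emph{precisely} $(1-\eta^2)(1+x)/(1-x)$, then $\min\norm\psi_2^2=(1-x^2)(1-\eta^2)$, which vanishes when $\eta=1$. Your displayed expression is proportional to $(1-|a|^2)^2-|b|^2$, so no normalization avoids this. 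But the minimum is positive because $R_0\ne0$. For $\Theta(z)=z$, for instance, $K_\Theta$ consists of the constants and $R_0=I$. Every symbol satisfies $\int_\T\psi\,dm=\ip{R_0 1}{1}=1$, so the minimum is $1$, whereas your formula gives $0$.

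The correct value is the one in Proposition~\ref{prop:least-symbol}. The paper obtains it from the explicit representative $s=u-\frac{2ab}{D}v$, where $u=\bar z(\Theta-a^2\overline\Theta)$, $v=\overline\Theta-\overline a^{\,2}\Theta$ and $D=1-x^2$. This $s$ differs from Sarason's symbol $\bar z\Theta$ by an element of $\Theta H^2+\overline{\Theta H^2}$ and is orthogonal to that subspace. Moreover $\norm u_2^2=\norm v_2^2=D$ and $\ip uv=2ab$. Hence $\min\norm\psi_2^2=D-4x|b|^2/D$, that is,
\[
\frac{\min\norm\psi_2^2}{\norm{R_0}^2}
=(1-\eta^2)\frac{1+x}{1-x}+\eta^2\frac{1-x}{1+x}.
\]
Your target is therefore not an identity. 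It follows only after discarding the nonnegative second term, which is exactly the final step of the paper's proof.

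The mechanism also differs from your heuristic. The subspaces $K_\Theta$ and $\overline{zK_\Theta}$ you name are orthogonal, since $H^2\perp\overline{zH^2}$. What drives the computation is the non-orthogonality of $\Theta H^2$ and $\overline{\Theta H^2}$, through $\int_\T\Theta^2\,dm=a^2$ and $\int_\T\bar z\Theta^2\,dm=2ab$. The first identity gives $D=(1-x)(1+x)$ and hence the factor $1+x$; the second gives the cross term that carries $\eta$.

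Two smaller slips. First, $\bar z(\Theta-a)$ is not a symbol of $R_0$ when $a\ne0$. It represents $R_0-aA^\Theta_{\bar z}$, and the compressed backward shift $A^\Theta_{\bar z}$ vanishes only when $\Theta$ is a multiple of $z$; adding multiples of $\overline\Theta$, which are null symbols, does not repair this. Second, the conjugated operator is $-(1-|w|^2)^{-1}$ times the operator $R_0$ of $\Theta_w$. This is a negative real multiple rather than a unimodular one, which is harmless by homogeneity.
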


The right-hand side approaches one as $|\Theta(w)|\to1$. The next lemma turns a positive lower bound in this region into connectedness, which is all we need for the characterization. We write $d_\D$ for distance in the complete metric $|dz|/(1-|z|^2)$; thus $d_\D(0,z)=\arctanh|z|$.

\begin{lemma}[Flow to a sublevel set]\label{lem:geometry} For a holomorphic map $G:\D\to\D$, assume $\eta_G\ge c>0$ on $\{|G|>r\}$, where $0\le r<1$. Then every $\Omega_t(G)=\{|G|<t\}$ with $r<t<1$ is nonempty and contractible, in particular connected. More precisely, for $r<s<1$, the set $E_s=\{|G|\le s\}$ is a nonempty strong deformation retract of $\D$ and is contractible.\footnote{We recall the following definitions: (a) A topological space is contractible if the identity map can be continuously deformed to a constant map; (b) a strong deformation retract moves each point continuously into $E_s$ while fixing points of $E_s$.} For $z\notin E_s$,
\begin{equation}\label{eq:distance}
\Delta_s(z)\le d_\D(z,E_s)\le c^{-1}\Delta_s(z),
\qquad
\Delta_s(z)=\arctanh|G(z)|-\arctanh s.
\end{equation}
\end{lemma}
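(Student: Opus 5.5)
The plan is to follow the outline in the proof strategy: build a flow that pushes points of $\{|G|>s\}$ down to level $s$, use the lower bound on $\eta_G$ to show it never escapes to the boundary, and assemble the stopped flow into a retraction.

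\emph{Step 1: $\{|G|>r\}$ and nonemptiness.} On $\{|G|>r\}$ we have $\eta_G\ge c>0$, so $G'\ne0$ there; in particular $G\neq0$ and $G'\neq0$ on this set. Nonemptiness of $E_s$ will come out of Step 2: any trajectory started in $\{|G|>s\}$ reaches level $s$. If $|G|\le s$ everywhere, then $E_s=\D$ and there is nothing to prove.

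\emph{Step 2: the flow and its length.} For $z$ with $|G(z)|>s$, solve
\[
\gamma'=-G(\gamma)/G'(\gamma),\qquad \gamma(0)=z .
\]
This is a holomorphic, hence locally Lipschitz, vector field on $\{|G|>r\}$. Along the solution, $G(\gamma(u))=e^{-u}G(z)$, so the image moves radially toward $0$. The hyperbolic speed satisfies
\[
\frac{|\gamma'|}{1-|\gamma|^2}=\eta_G(\gamma)^{-1}\,\frac{|(G\circ\gamma)'|}{1-|G\circ\gamma|^2}\le c^{-1}\,\frac{|(G\circ\gamma)'|}{1-|G\circ\gamma|^2}.
\]
The image path is a radial segment, so its hyperbolic length from $|G(z)|$ down to $\rho$ equals $\arctanh|G(z)|-\arctanh\rho$. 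Hence the lifted path up to the time $T(z)=\log(|G(z)|/s)$ has length at most $c^{-1}\Delta_s(z)$.

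\emph{Step 3: existence up to time $T(z)$.} Suppose the maximal solution stops at some $u_0<T(z)$. Its length on $[0,u_0)$ is finite, so by completeness of $d_\D$ the path has a limit $p\in\D$. By continuity $|G(p)|=e^{-u_0}|G(z)|>s>r$, so $G'(p)\ne0$ and the ODE continues past $u_0$, a contradiction. This is the step I expect to be the main obstacle: we must make sure the solution stays in the domain where $G'\neq0$, and that bounded hyperbolic length prevents escape to $\T$. Both follow from the estimate in Step 2.

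\emph{Step 4: the retraction and the distance bounds.} Define
\[
H(z,\tau)=\gamma_z\bigl(\tau\,T(z)\bigr)\quad\text{for } z\notin E_s,\qquad H(z,\tau)=z\quad\text{for } z\in E_s,\qquad \tau\in[0,1].
\]
Continuity of $H$ at points with $|G(z)|>s$ follows from continuous dependence of ODE solutions on initial data, together with continuity of $T$. At a point $z_0$ with $|G(z_0)|=s$, nearby points have $T(z)\to0$, and by Step 2 their displacement is at most $c^{-1}\Delta_s(z)\to0$, which gives continuity there as well. So $H$ is a strong deformation retraction of $\D$ onto $E_s$. Since $\D$ is contractible, so is its retract $E_s$: compose the contraction of $\D$ with the retraction $H(\cdot,1)$.

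The upper bound in \eqref{eq:distance} comes from the endpoint $\gamma_z(T(z))\in E_s$ together with the length bound of Step 2. For the lower bound, apply Schwarz--Pick: $d_\D(z,y)\ge d_\D(G(z),G(y))\ge\arctanh|G(z)|-\arctanh|G(y)|\ge\Delta_s(z)$ for every $y\in E_s$.

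\emph{Step 5: the open sublevel sets.} Let $r<t<1$ and pick $r<s<t$. Run the same flow only on $\Omega_t\setminus E_s$. Since $|G|$ decreases along trajectories, they stay inside $\Omega_t$, so $\Omega_t$ deformation retracts onto $E_s$. Hence $\Omega_t$ is nonempty and contractible, and in particular connected.
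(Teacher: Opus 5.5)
Your proposal is correct and follows the paper's proof essentially step for step: the flow $\gamma'=-G/G'$, the length bound from $\eta_G\ge c$, completeness of $d_\D$ to rule out escape, the stopped-flow homotopy with continuity on $\{|G|=s\}$ from the displacement bound $c^{-1}\Delta_s(z)$, and Schwarz--Pick for the lower distance bound. One small fix in Step 3: assume the maximal existence time $\beta$ satisfies $\beta\le T(z)$ rather than $\beta<T(z)$, as the paper does, since you need the solution at time $T(z)$ and slightly beyond (for continuous dependence); your argument covers $\beta=T(z)$ verbatim, because then $|G(p)|=s>r$.
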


\begin{proof}[Proof of Theorem~\ref{thm:main}] The implication (i) $\Rightarrow$ (ii) is \cite[Corollary~2.5]{BBK}. For (ii) $\Rightarrow$ (iii), Sarason's theorem on closedness in the weak operator topology makes $\TT(\Theta)$ a Banach space in the operator norm \cite[Theorem~4.2]{Sarason}. Under (ii), the contraction $Q_\Theta:L^\infty(m)\to\TT(\Theta)$, $Q_\Theta\psi=A_\psi^\Theta$, is surjective. The open mapping theorem gives $M\ge1$ such that every $A\in\TT(\Theta)$ has a symbol satisfying
\begin{equation}\label{eq:uniform}
\norm{\psi}_\infty\le M\norm{A}.
\end{equation}
In particular, $R_*(\Theta)\le M$.

For (iii) $\Rightarrow$ (i), we put $R=R_*(\Theta)<\infty$ and choose $0<r<1$ close enough to one that $c^2:=1-R^2(1-r^2)/(1+r^2)>0$. Proposition~\ref{prop:rank-one-derivative} gives
\[
 \eta_\Theta(z)\ge c\qquad (|\Theta(z)|>r).
\]
Lemma~\ref{lem:geometry} then makes $\Omega_t(\Theta)$ connected for $r<t<1$, so $\Theta$ is one-component. \end{proof}

\subsection{Symbol norms under disk automorphisms}
A null symbol gives the zero operator, so adding one to a symbol does not change the operator. The following characterization is due to Izuchi, as Sarason acknowledges \cite[p.~494 and Theorem 3.1]{Sarason}:
\begin{equation}\label{eq:null-symbol}
A_\varphi^\Theta=0
\quad\Longleftrightarrow\quad
\varphi\in\Theta H^2+\overline{\Theta H^2},
\qquad \varphi\in L^2(m).
\end{equation}
No normalization of $\Theta(0)$ is needed.

For a one-component $\Theta$, the bounded symbol theorem \cite[Corollary~2.5]{BBK} and the null symbol identity \eqref{eq:null-symbol} give, for every $\varphi\in L^2(m)$,
$$
A_\varphi^\Theta\text{ is bounded}
\quad\Longleftrightarrow\quad
\varphi\in L^\infty(m)+\Theta H^2+\overline{\Theta H^2}.
$$
Theorem~\ref{thm:main} shows that one-component model spaces are exactly those on which this criterion holds for every $L^2$ symbol.

The least bounded symbol norm $b_\Theta$ is homogeneous and subadditive and, for each finite $M\ge0$, the sublevel set
\begin{equation}\label{eq:cost-sublevels}
 \{A\in\TT(\Theta):b_\Theta(A)\le M\}
\end{equation}
is closed in operator norm. Also, whenever $b_\Theta(A)$ is finite, there is a symbol whose norm equals $b_\Theta(A)$; indeed, the map $\psi\mapsto A_\psi^\Theta$ sends each weak-* compact ball of $L^\infty(m)$ continuously into the weak operator topology, since
\[
 \langle A_\psi^\Theta f,g\rangle
       =\int_{\T}\psi f\overline g\,dm,
       \qquad f,g\in K_\Theta,
\]
and $f\overline g\in L^1(m)$. Its image is therefore compact in the weak operator topology, and hence closed in the operator norm. Moreover, weak-* compactness applied to a minimizing sequence gives attainment.

To prove Proposition~\ref{prop:rank-one-derivative}, we first move the point $w$ to the origin. The unitary conjugation associated with disk automorphisms in \cite[Proposition~4.1]{CGRW} and \cite[Theorem~6.1]{Sarason2012} preserves operator norms. We check that it also preserves least bounded symbol norms, so that a calculation at zero gives the estimate everywhere.

\begin{samepage}
\begin{lemma}[Invariance under disk automorphisms]\label{lem:automorphisms} 
Let
\[
\tau_w(z)=\frac{w-z}{1-\overline w z},\qquad w\in\D.
\]
Put $F=\Theta\circ\tau_w$. Then $M_*(F)=M_*(\Theta)$ and $R_*(F)=R_*(\Theta)$, allowing the value $+\infty$. In particular, any uniform bounded symbol estimate for $\Theta$ holds for $F$ with the same constant. \end{lemma}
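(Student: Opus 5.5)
We will use the standard unitary $U:K_\Theta\to K_F$, $(Uf)(z)=\sqrt{1-|w|^2}\,f(\tau_w(z))/(1-\overline w z)$, the weighted composition operator from \cite[Proposition~4.1]{CGRW} and \cite[Theorem~6.1]{Sarason2012}. Since $\tau_w$ is an involution, $U$ is the restriction of a unitary $V$ on $L^2(m)$ given by the same formula on $\T$. First we check that $V$ maps $H^2$ onto $H^2$ and $\Theta H^2$ onto $F H^2$. The latter holds because $V(\Theta h)=(\Theta\circ\tau_w)\,Vh=F\,Vh$. Hence $V$ maps $K_\Theta$ onto $K_F$ and intertwines the projections, $VP_\Theta=P_FV$.

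The key identity is $V(\varphi g)=(\varphi\circ\tau_w)\,Vg$ on $\T$. From this,
\[
 U A_\varphi^\Theta U^{*}=A_{\varphi\circ\tau_w}^F
\]
on bounded functions, because $U^*$ maps $K_F\cap H^\infty$ into $K_\Theta\cap H^\infty$: the factor $1/(1-\overline w z)$ is bounded. Since $|\tau_w|=1$ on $\T$ and $\tau_w$ is a diffeomorphism of $\T$, composition with $\tau_w$ preserves $L^\infty$ norms and maps $L^2$ to $L^2$, with Jacobian bounded above and below. Therefore $A\mapsto UAU^*$ is an isometric bijection $\TT(\Theta)\to\TT(F)$, and $\psi\mapsto\psi\circ\tau_w$ is a bijection between the bounded symbols of $A$ and those of $UAU^*$ that preserves $\norm{\cdot}_\infty$. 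This gives $b_F(UAU^*)=b_\Theta(A)$, including the case $+\infty$. Taking suprema yields $M_*(F)=M_*(\Theta)$.

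For $R_*$, we must show that conjugation carries the rank-one family to itself up to scalars. Here we compute $U k_\lambda^\Theta$ and $U\widetilde k_\lambda^\Theta$. Unitary equivalence maps reproducing kernels to multiples of reproducing kernels: from $f(\lambda)=\ip{f}{k^\Theta_\lambda}$ and the formula for $U$ we get
\[
 Uk^\Theta_\lambda=\overline{\alpha}\,k^F_{\tau_w(\lambda)},\qquad
 \alpha=\frac{\sqrt{1-|w|^2}}{1-\overline w\,\tau_w(\lambda)}\neq0.
\]
For the conjugate kernel we use the conjugation $C_\Theta f=\Theta\bar z\bar f$, which satisfies $C_\Theta k^\Theta_\lambda=\widetilde k^\Theta_\lambda$. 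We verify directly that $UC_\Theta=\epsilon\,C_FU$ for a unimodular constant $\epsilon$ depending on $w$, using $\overline{\tau_w}\,\overline{(1-\overline w z)}\cdot(\ldots)$ on $\T$. It then follows that $U\widetilde k^\Theta_\lambda$ is a nonzero multiple of $\widetilde k^F_{\tau_w(\lambda)}$. Consequently $UR_\lambda U^*=\beta R^F_{\tau_w(\lambda)}$ with $\beta\ne0$. Both $b$ and the operator norm are homogeneous, so the ratios agree, and $\lambda\mapsto\tau_w(\lambda)$ is a bijection of $\D$. This gives $R_*(F)=R_*(\Theta)$.

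The main obstacle is the bookkeeping in this last step: checking that $U$ intertwines the conjugations up to a unimodular constant. An alternative that avoids the constant is to compute $U\widetilde k^\Theta_\lambda$ directly from $\widetilde k^\Theta_\lambda(z)=(\Theta(z)-\Theta(\lambda))/(z-\lambda)$ and the identity $\tau_w(z)-\tau_w(\lambda)=-(1-|w|^2)(z-\lambda)/((1-\overline wz)(1-\overline w\lambda))$. I would try this explicit computation first.
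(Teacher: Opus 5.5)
Your proposal is correct and follows the paper's proof essentially step for step. It uses the same weighted composition unitary $U_wf=h_w(f\circ\tau_w)$, the same covariance $U_wA_\varphi^\Theta U_w=A^F_{\varphi\circ\tau_w}$ giving a norm-preserving bijection of bounded symbols, and the same observation that conjugation sends $R_\lambda^\Theta$ to a nonzero multiple of $R^F_{\tau_w(\lambda)}$; the paper records this multiple as $-h_w(\lambda)^2$.

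One small slip does not affect the argument, since you only use that the constants are nonzero. The constant in $Uk_\lambda^\Theta=\overline{\alpha}\,k^F_{\tau_w(\lambda)}$ is $\alpha=h_w(\lambda)=\sqrt{1-|w|^2}/(1-\overline w\lambda)$, which is the reciprocal of what you wrote. Also, the unimodular constant in $UC_\Theta=\epsilon C_FU$ is $\epsilon=-1$.
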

\end{samepage}

\begin{proof} We put $h_w(z)=\sqrt{1-|w|^2}/(1-\overline w z)$ and $U_wf=h_w(f\circ\tau_w)$. Boundary change of variables and $h_w(h_w\circ\tau_w)=1$ show that $U_w$ is a unitary involution on $L^2(m)$. It maps $H^2$ onto itself and $\Theta H^2$ onto $FH^2$ where $F:= \Theta \circ \tau_w$, so $U_wK_\Theta=K_F$ and $U_wP_\Theta U_w=P_F$. For fixed $w$, the boundary Jacobian is bounded above and below, and $h_w\in H^\infty$. Thus composition preserves $L^2(m)$, and $U_w$ maps the defining domains bijectively. Conjugation of multiplication gives
\begin{equation}\label{eq:covariance}
U_wA_\varphi^\Theta U_w=A_{\varphi\circ\tau_w}^{F},
\qquad\varphi\in L^2(m).
\end{equation}
Composition with $\tau_w$ preserves the essential supremum norm. The identity \eqref{eq:covariance}, applied in both directions, then gives $b_F(U_wAU_w)=b_\Theta(A)$, including when no bounded symbol exists. Since $U_w$ is unitary, $M_*(F)=M_*(\Theta)$. For the rank-one family, $U_wR_\lambda^\Theta U_w=-h_w(\lambda)^2R_{\tau_w(\lambda)}^F$, where superscripts specify the model space. Homogeneity gives $R_*(F)=R_*(\Theta)$ as well. \end{proof}

\subsection{The least $L^2$ symbol norm} \label{subsec:least-symbol}

We first minimize the $L^2$ norm of a symbol, where orthogonal projection is available. Since $m$ is normalized, every bounded symbol satisfies $\norm\varphi_2\le\norm\varphi_\infty$. We compute this minimum in terms of the value and derivative of the inner function at zero, and compare it with the $L^\infty$ symbol bound to estimate the hyperbolic derivative from below. Replacing $\Theta$ by $\Theta\circ\tau_w$ leaves the rank-one symbol constant unchanged by Lemma~\ref{lem:automorphisms}, so the same calculation gives the estimate at any $w\in\D$.

For a nonconstant inner function $F$, the map $C_Fg=F\overline z\,\overline g$ on boundary values is a conjugation on $K_F$: it is conjugate linear, preserves the norm, and satisfies $C_F^2=I$. It sends $k_w^F$ to $\widetilde k_w^F$, explaining the name conjugate kernel. We write $a=F(0)$.

\begin{proposition}[Exact least symbol norm]\label{prop:least-symbol} With $D=1-|a|^4$ and $R_F=\widetilde k_0^F\otimes k_0^F$, $\norm{R_F}=1-|a|^2$ and
\begin{equation}\label{eq:least-symbol}
\min\{\norm{\varphi}_2^2:A_\varphi^F=R_F,\ \varphi\in L^2(m)\}
=D-\frac{4|a|^2|F'(0)|^2}{D}.
\end{equation}
\end{proposition}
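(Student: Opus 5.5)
The plan has two parts. First compute $\norm{R_F}$ directly. Then find the least $L^2$ symbol norm by an orthogonal projection in $L^2(m)$, using the null-symbol characterization \eqref{eq:null-symbol}.

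\emph{Operator norm.} For a rank-one operator, $\norm{f\otimes g}=\norm f_2\norm g_2$. The conjugation $C_F$ is isometric and sends $k_0^F$ to $\widetilde k_0^F$, so both kernels have the same norm. That norm is
\[
\norm{k_0^F}_2^2=k_0^F(0)=1-|a|^2 .
\]
Hence $\norm{R_F}=1-|a|^2$.

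\emph{Reduction to a projection.} First I would produce one explicit symbol, $\varphi_0=\bar zF$. For $f\in K_F\cap H^\infty$, write $f=f(0)+zf_1$. Then
\[
\bar zFf=f(0)\,\bar zF+Ff_1 .
\]
The term $Ff_1$ is annihilated by $P_F$. Also $P_F(\bar zF)=(F-a)/z=\widetilde k_0^F$, because $P_+(\bar zF)=(F-a)/z$ already lies in $K_F$. Since $f(0)=\ip f{k_0^F}$, this gives $A^F_{\bar zF}=R_F$.

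By \eqref{eq:null-symbol}, the set of $L^2$ symbols of $R_F$ is the affine space $\varphi_0+N$, where $N$ is the $L^2$-closure of $FH^2+\overline{FH^2}$. The null-symbol identity gives $N$ as a set; the identity $\ip{A_\varphi f}{g}=\int\varphi f\bar g\,dm$ shows that $N$ is closed. Therefore the minimum in \eqref{eq:least-symbol} is attained and equals $\norm{P_{N^\perp}\varphi_0}_2^2$.

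A function $\rho$ lies in $N^\perp$ exactly when
\[
P_+(\bar F\rho)=0\qquad\text{and}\qquad P_+(F\bar\rho)=0 .
\]
So the task is to find $u\in N$ such that $\rho=\varphi_0-u$ satisfies both conditions, and then to compute $\norm\rho_2^2$.

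\emph{Finite-dimensional ansatz.} I would take $u$ from a small explicit subspace of $N$, spanned by $F$, $\bar F$, $zF$, $\bar z\bar F$ and similar low-order terms. All of these lie in $FH^2+\overline{FH^2}$. The inner products among them reduce to low Taylor coefficients of $F$ and $F^2$ at the origin, using $|F|=1$ on $\T$. For example,
\[
\ip{\bar zF}{\bar F}=\int\bar zF^2\,dm=2aF'(0),\qquad \ip F{\bar F}=a^2 .
\]
The two orthogonality conditions, applied to $\rho$, reduce to finitely many linear equations on the coefficients of $u$, because $P_+$ of $\bar F\cdot(\text{ansatz})$ produces only constants, powers of $z$, and multiples of $F^2$. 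Solving this system, the Gram determinants should produce the denominator $D=1-|a|^4$. Then $\norm\rho_2^2$ is computed by expanding with these Gram entries.

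\emph{Main obstacle.} The hard step is choosing the right finite span. A first try with only $\{F,\bar F\}$ gives $1-4|a|^2|F'(0)|^2/D$, not the claimed value, and the residual does not satisfy $P_+(F\bar\rho)=0$. This shows that further directions are needed, and they must account for the extra reduction from $1$ to $D$ of size $|a|^4$. I would enlarge the span step by step, adding $zF$, $\bar z\bar F$, $a\bar F$-type corrections and possibly $\bar F^2F$-type terms, until both orthogonality conditions hold exactly. A useful cross-check is that a Blaschke factor $F$ should make the formula explicit; if that fails, redo the ansatz with $K_F$-valued coefficients. After that, verifying $\rho\perp N$ via the two $P_+$ conditions confirms the minimum rigorously.
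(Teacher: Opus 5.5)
You follow the paper's route: the explicit symbol $\overline zF$, the affine space $\overline zF+N$ with $N=FH^2+\overline{FH^2}$, and orthogonal projection onto $N^\perp$. Your operator-norm computation is fine. However, the proposal stops before the step that carries the content, and the criterion you plan to use for that step is wrong. The condition $\rho\perp\overline{FH^2}$ means $\int_\T\rho Fh\,dm=0$ for all $h\in H^2$. That is, $P_+(\overline{F\rho})=0$, or equivalently $F\rho\in zH^2$; it is not $P_+(F\overline\rho)=0$. Your second condition says $\rho\perp F\overline{H^2}$. Combined with $\rho\perp FH^2$, it gives $\rho\perp F(H^2+\overline{H^2})$, which is dense in $L^2(m)$. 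So $\rho=0$, hence $\overline zF\in N$, contradicting $R_F\ne0$. The linear system you intend to solve is therefore inconsistent for every finite ansatz. This is why the $\{F,\overline F\}$ residual appears to fail your test, and why you anticipate multiples of $F^2$ and an ever-growing span.

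The missing idea is that two explicit corrections suffice.

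1. With the correct criterion, $\overline zF\perp FH^2$ already holds, and $\ip{\overline zF}{\overline{Fh}}=(F^2h)'(0)=2aF'(0)h(0)+a^2h'(0)$.
2. Subtracting $a^2\overline{zF}\in N$ removes the $h'(0)$ term. The function $u=\overline z(F-a^2\overline F)$ stays orthogonal to $FH^2$ and satisfies $\ip{u}{\overline{Fh}}=2aF'(0)h(0)$.
3. The remaining term is removed with $v=\overline F-\overline a^{\,2}F\in N$. It is orthogonal to $FH^2$ and satisfies $\ip{v}{\overline{Fh}}=Dh(0)$.
4. Hence $s_F=u-2aF'(0)v/D$ is a symbol orthogonal to all of $N$, so it is the minimizer. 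The values $\norm{u}_2^2=\norm{v}_2^2=D$ and $\ip uv=2aF'(0)$ then give the formula.

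In your Gram language, the vector $\overline z\,\overline F$ that you listed is orthogonal to $F$ and $\overline F$ and has inner product $a^2$ with $\overline zF$. It therefore accounts for exactly the missing $|a|^4$, and the span of $F,\overline F,\overline z\,\overline F$ is enough. A finite Gram computation only shows orthogonality to that span, though. The four pairings against general $Fh$ and $\overline{Fh}$ are what certify that the residual equals $P_{N^\perp}(\overline zF)$.
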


Since $\Theta_a(0)=a$, $\Theta_a'(0)=0$ and $\norm\varphi_\infty\ge\norm\varphi_2$, the proposition gives
\[
 R_*(\Theta_a)\ge\frac{\sqrt{1-a^4}}{1-a^2}
 =\sqrt{\frac{1+a^2}{1-a^2}}\to\infty
 \qquad(a\to1).
\]
Thus a critical value approaching 1 forces large symbol constants even in dimension two \cite[Sections~5 and~7]{Sarason2012}. We note that related lower bounds for operator norms appear in \cite[Section~3]{GarciaRoss}.

The derivative appears in the coefficient $2F(0)F'(0)$ of $z$ in $F^2$. The initial symbol $\overline zF$ is already orthogonal to $FH^2$ and we first correct it so that its pairing against $\overline{Fh}$ depends only on $h(0)$. Then, with a second correction, we cancel the remaining term. At both steps, we keep the symbol orthogonal to $FH^2$.

\begin{proof} Sarason's rank-one formula gives $R_F=A_{\overline zF}^{F}$ \cite[Theorem 5.1]{Sarason}. Since $\norm{k_0^F}_2^2=\norm{\widetilde k_0^F}_2^2=1-|a|^2$, the asserted operator norm follows.

By \eqref{eq:null-symbol}, we can subtract any element of $FH^2+\overline{FH^2}$ from the symbol without changing the operator. The least norm representative is the one orthogonal to this subspace. The projection formulas in \cite[Lemmas 5.1 and 3.1, equation (3.3)]{BCFMT} lead to the choice
\[
N_F=FH^2+\overline{FH^2},\qquad
u=\overline z(F-a^2\overline F),\qquad
v=\overline F-\overline a^{\,2}F.
\]
Here $\overline zF-u=a^2\overline{zF}\in N_F$ and $v\in N_F$. For $h\in H^2$, the identity $|F|=1$ on $\T$ gives
\begin{align*}
\ip{u}{Fh}
&=\int_\T\overline z(1-a^2\overline F^{\,2})\overline h\,dm=0,\\
\ip{v}{Fh}
&=\int_\T(\overline F^{\,2}-\overline a^{\,2})\overline
 h\,dm=0,\\
\ip{u}{\overline{Fh}}
&=\int_\T\overline z(F^2-a^2)h\,dm=2aF'(0)h(0),\\
\ip{v}{\overline{Fh}}
&=\int_\T(1-\overline a^{\,2}F^2)h\,dm=Dh(0).
\end{align*}
These identities show that
\[
s_F=u-\frac{2aF'(0)}{D}v
\]
is a symbol of $R_F$ orthogonal to $N_F$. Every other symbol differs from $s_F$ by an element of $N_F$, so $s_F$ has least norm. Finally, $\int_\T F^2\,dm=a^2$ gives
\[
\norm{u}_2^2=\norm{v}_2^2=D,\qquad
\ip{u}{v}=2aF'(0).
\]
Thus $\norm{s_F}_2^2=D-|2aF'(0)|^2/D$, as claimed. \end{proof}

\subsection{Rank-one control of the hyperbolic derivative}

\begin{proof}[Proof of Proposition~\ref{prop:rank-one-derivative}] We put $R=R_*(\Theta)$. For $w\in\D$, we set $F=\Theta\circ\tau_w$, $x=|F(0)|^2=|\Theta(w)|^2$ and $\eta=\eta_\Theta(w)$. Lemma~\ref{lem:automorphisms} gives $R_*(F)=R$, so $b_F(R_F)\le R\norm{R_F}=R(1-x)$. Since $\norm\varphi_2\le\norm\varphi_\infty$, Proposition~\ref{prop:least-symbol} gives
\[
1-x^2-\frac{4x|F'(0)|^2}{1-x^2}\le R^2(1-x)^2.
\]
The identity $|F'(0)|=(1-|w|^2)|\Theta'(w)|=\eta(1-x)$, followed by division by $1-x$, yields
\begin{equation}\label{eq:eta-exact}
1+x-\frac{4x\eta^2}{1+x}\le R^2(1-x).
\end{equation}
By Schwarz-Pick, $0\le\eta\le1$, and
\[
1+x-\frac{4x\eta^2}{1+x}
=(1+x)(1-\eta^2)+\frac{(1-x)^2}{1+x}\eta^2
\ge(1+x)(1-\eta^2).
\]
This proves \eqref{eq:main-derivative}. \end{proof}

\subsection{The flow to a sublevel set}\label{sec:geometry}

Throughout this argument, $r<s<1$ is fixed. For $G(z)=z^2$, the trajectory $\gamma_z(u)=e^{-u/2}z$, stopped when $|\gamma_z(u)|=\sqrt s$, gives a radial retraction onto $E_s=\{|G|\le s\}$ (since $u$ is increasing); points already in $E_s$ stay fixed. For general $G$, requiring $G(\gamma_z(u))=e^{-u}G(z)$ gives $\gamma'=-G/G'$. The derivative bound controls the hyperbolic length of the trajectory, and completeness prevents it from escaping before reaching $E_s$. For inner functions, each open sublevel component is already simply connected \cite[Lemma~2]{CM}; the flow connects the whole sublevel set.

\begin{proof}[Proof of Lemma~\ref{lem:geometry}] Since $G'$ does not vanish on $V=\{|G|>r\}$, the vector field $-G/G'$ is holomorphic there. For $z\notin E_s$, we consider the initial value problem
\[
\gamma_z'(u)=-\frac{G(\gamma_z(u))}{G'(\gamma_z(u))},
\qquad \gamma_z(0)=z.
\]
Local existence, uniqueness and continuous dependence hold. Along the solution, $G(\gamma_z(u))=e^{-u}G(z)$, so the stopping time is $T(z)=\log(|G(z)|/s)$. On any existing interval $0\le a<b\le T(z)$,
\begin{equation}\label{eq:flow-length}
\int_a^b\frac{|\gamma_z'(u)|}{1-|\gamma_z(u)|^2}\,du
\le\frac1c\int_{e^{-b}|G(z)|}^{e^{-a}|G(z)|}\frac{dv}{1-v^2}.
\end{equation}
If the maximal existence time were $\beta\le T(z)$, this estimate would make $\gamma_z(u)$ Cauchy in the complete disk metric as $u\to\beta$. Its limit $p\in\D$ would satisfy $|G(p)|=e^{-\beta}|G(z)|\ge s>r$. Local existence near $p$ would extend the solution, a contradiction. Thus the solution reaches level $s$.

For $0\le\lambda\le1$, we define
\[
H_s(z,\lambda)=
\begin{cases}
z,&z\in E_s,\\
\gamma_z(\lambda T(z)),&z\notin E_s.
\end{cases}
\]
Continuous dependence proves joint continuity away from $\{|G|=s\}$. At that level, \eqref{eq:flow-length} gives the uniform estimate
\[
d_\D(z,H_s(z,\lambda))\le c^{-1}\Delta_s(z)\to0
\qquad (|G(z)|\to s\text{ from above}),
\]
which proves joint continuity there as well. Hence $H_s$ is a strong deformation retraction, with $\rho_s=H_s(\cdot,1)$ mapping $\D$ onto $E_s$. In particular, $E_s$ is nonempty and contractible, as a retract of the disk. For $r<t<1$, we choose $r<s<t$. The homotopy preserves $\Omega_t(G)$, since $|G|$ never increases, and retracts it onto $E_s$. Thus $\Omega_t(G)$ is contractible too.

The upper bound in \eqref{eq:distance} follows by using $\rho_s(z)$. For $y\in E_s$, Schwarz-Pick and the triangle inequality give
\[
d_\D(z,y)\ge d_\D(G(z),G(y))
\ge\arctanh|G(z)|-\arctanh s.
\]
The infimum over $y$ gives the lower bound. \end{proof}

\begin{corollary}[Derivative criterion and connectedness threshold] \label{cor:derivative-criterion} A nonconstant inner function $\Theta$ is one-component if and only if there are $0\le r<1$ and $c>0$ such that $\eta_\Theta(z)\ge c$ whenever $|\Theta(z)|>r$. If $R=R_*(\Theta)<\infty$, then $\Omega_t(\Theta)$ is connected whenever
\begin{equation}\label{eq:threshold}
 q_R:=\sqrt{\frac{R^2-1}{R^2+1}}<t<1.
\end{equation}
In particular, when $R=1$, every $\Omega_t(\Theta)$ with $0<t<1$ is connected. \end{corollary}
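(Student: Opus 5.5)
The corollary has three parts, and I will take them in order: the "if" direction of the derivative criterion, its "only if" direction, and then the explicit threshold.

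\emph{The ``if'' direction.} Suppose $\eta_\Theta\ge c>0$ on $\{|\Theta|>r\}$. Lemma~\ref{lem:geometry}, applied with $G=\Theta$, makes $\Omega_t(\Theta)$ connected for every $r<t<1$. Hence $\Theta$ is one-component.

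\emph{The ``only if'' direction.} Let $\Theta$ be one-component. By Theorem~\ref{thm:main} ((i)$\Rightarrow$(iii)), $R=R_*(\Theta)<\infty$. Proposition~\ref{prop:rank-one-derivative} then gives
\[
\eta_\Theta(z)^2\ge 1-R^2\frac{1-|\Theta(z)|^2}{1+|\Theta(z)|^2}.
\]
The right-hand side is increasing in $|\Theta(z)|$. So for any fixed $r$ at which it is positive, it stays bounded below by a positive constant on $\{|\Theta|>r\}$, which produces the required $r$ and $c$.

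\emph{The threshold.} Given $t$ with $q_R<t<1$, I choose $s$ with $q_R<s<t$. For $|\Theta(z)|>s$, Proposition~\ref{prop:rank-one-derivative} gives
\[
\eta_\Theta(z)^2\ge 1-R^2\frac{1-s^2}{1+s^2}=:c^2 .
\]
I need $c^2>0$, which is the inequality $R^2(1-s^2)<1+s^2$. This rearranges to $s^2(R^2+1)>R^2-1$, that is, $s>q_R$. So $c>0$, and Lemma~\ref{lem:geometry}, with $s$ playing the role of $r$, shows that $\Omega_t(\Theta)$ is connected.

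\emph{The case $R=1$.} Here $q_1=0$, so the condition reads $0<t<1$. Once $t>0$ is fixed, the choice $0<s<t$ is admissible, and the argument above applies for every such $t$. No separate treatment is needed.

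The only point requiring care is that Lemma~\ref{lem:geometry} needs a strict inequality $t>r$ together with a uniform positive lower bound for $\eta_\Theta$ on $\{|\Theta|>r\}$. Inserting the intermediate level $s$ between $q_R$ and $t$ handles this, and the rest is routine algebra.
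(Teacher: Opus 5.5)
Your proposal is correct and follows the paper's proof essentially verbatim. Sufficiency comes from Lemma~\ref{lem:geometry}, and necessity from Theorem~\ref{thm:main} combined with Proposition~\ref{prop:rank-one-derivative}. The threshold is handled by inserting an intermediate level strictly between $q_R$ and $t$, where the bound \eqref{eq:main-derivative} is positive; your algebra showing that positivity is equivalent to $s>q_R$ is right.
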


\begin{proof} Necessity follows from Theorem~\ref{thm:main} and Proposition~\ref{prop:rank-one-derivative}; sufficiency follows from Lemma~\ref{lem:geometry}.

For the threshold, we fix $q_R<t<1$ and choose $q_R<r<t$. Equation~\eqref{eq:main-derivative} gives a positive uniform lower bound for $\eta_\Theta$ on $\{|\Theta|>r\}$, so Lemma~\ref{lem:geometry} gives connectedness of $\Omega_t(\Theta)$. \end{proof}

For $R=R_*(\Theta)<\infty$ and $q_R<s<1$, the flow in Lemma~\ref{lem:geometry}, using the bound $\eta_\Theta\ge c_s:=\sqrt{1-R^2(1-s^2)/(1+s^2)}$ along its stopped trajectories, gives a strong deformation retraction onto $E_s=\{|\Theta|\le s\}$ and \eqref{eq:distance} with $c=c_s$.

For the Blaschke product of degree two $\Theta_a(z)=(a-z^2)/(1-az^2)$, $0<a<1$, introduced above, we can compute the connectedness threshold directly. We let $u=z^2$. Then
\[ |\Theta_a(z)|<t  \text{ if and only if } |u-a|<t|1-au|. \]
The inequality on the right describes an open disk in the $u$-plane. For $0<t<a$, this disk avoids the origin; at $t=a$, it is tangent to the origin, which remains excluded because the inequality is strict. Thus, for every $0<t\le a$, the two branches of the square root map the disk onto exactly two components of $\Omega_t(\Theta_a)$. For $a<t<1$, the disk contains the origin. Since the disk $\{u:|u-a|<t|1-au|\}$ is convex and contains the origin, $u=z^2$ belonging to this disk implies that $\lambda^2u$ also belongs to it for every $0\le\lambda\le1$. Thus $\lambda z\in\Omega_t(\Theta_a)$, and the segment from the origin to $z$ lies in $\Omega_t(\Theta_a)$. Hence $\Omega_t(\Theta_a)$ is star-shaped about the origin and, in particular, connected. Thus,
\[ \Omega_t(\Theta_a)\text{ is connected} \quad\Longleftrightarrow\quad a<t<1. \]

\section{A quadratic criterion for the one-component property} \label{sec:quadratic-geometry}

The one-component property concerns the topology of a sublevel set. We now characterize it by the rate at which the Schwarz--Pick inequality approaches equality as the values of $\Theta$ approach the unit circle. Proposition~\ref{prop:rank-one-derivative} gives a bound for the defect $1-\eta_\Theta(z)^2$ that is linear in $1-|\Theta(z)|^2$. The annular covering property improves this to a quadratic bound, and the exponent is optimal.

\begin{theorem}[Geometric criterion]\label{thm:quadratic} A nonconstant inner function $\Theta$ belongs to $(\CLS)$ if and only if the quadratic bound
\begin{equation}\label{eq:quadratic}
\sup_{z\in\D}
\frac{1-\eta_\Theta(z)^2}{(1-|\Theta(z)|^2)^2}<\infty
\end{equation}
holds. The exponent $2$ in the denominator $(1-|\Theta(z)|^2)^2$ is optimal within the class $(\CLS)$: for every $p>2$, there exists $\Theta\in(\CLS)$ such that
\[
 \sup_{z\in\D}
 \frac{1-\eta_\Theta(z)^2}{(1-|\Theta(z)|^2)^p}
 =\infty.
\]\end{theorem}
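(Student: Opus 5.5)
The plan has three parts: sufficiency, which is immediate from Section~\ref{sec:symbol}; optimality, which is an explicit computation for $\Theta(z)=z^2$; and necessity, which is the real content.

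\emph{Sufficiency of \eqref{eq:quadratic}.} Suppose $1-\eta_\Theta^2\le K(1-|\Theta|^2)^2$ on $\D$. Choose $r<1$ with $K(1-r^2)^2\le 1/2$. Then $\eta_\Theta\ge 2^{-1/2}$ on $\{|\Theta|>r\}$. Corollary~\ref{cor:derivative-criterion}, or directly Lemma~\ref{lem:geometry}, then gives $\Theta\in(\CLS)$.

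\emph{Optimality of the exponent.} Take $\Theta(z)=z^2$, which is one-component. A direct computation gives
\[
\eta_\Theta(z)=\frac{2|z|(1-|z|^2)}{1-|z|^4}=\frac{2|z|}{1+|z|^2},
\qquad
1-\eta_\Theta(z)^2=\frac{(1-|z|^2)^2}{(1+|z|^2)^2}.
\]
Since $1-|\Theta(z)|^2=1-|z|^4=(1-|z|^2)(1+|z|^2)$, the ratio with exponent $p$ is comparable to $(1-|z|^2)^{2-p}$. This tends to $\infty$ as $|z|\to1$ whenever $p>2$. The same computation shows the supremum is finite for $p=2$, which is consistent with the criterion.

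\emph{Necessity.} Assume $\Theta\in(\CLS)$ and fix $w$ with $x=|\Theta(w)|^2$ close to one; write $\delta=1-x$. The set where $x$ is bounded away from one is harmless, since the numerator is at most $1$ there.

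First normalize by automorphisms on both sides. Put $G=\tau_{\Theta(w)}\circ\Theta\circ\tau_w$. Then $G(0)=0$ and $|G'(0)|=\eta_\Theta(w)$, so $G=zB$ with $B$ a Schur function and $|B(0)|=\eta_\Theta(w)$. Since $1-\eta^2\le 2(1-|B(0)|)$, it suffices to prove $d_\D(B(0),\T\text{-side})$ is large, concretely $\arctanh|B(0)|\ge\log(1/\delta)-C$.

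Schwarz--Pick for $B$ gives $\arctanh|B(0)|\ge\arctanh|B(y)|-d_\D(0,y)$ for every $y\in\D$. So the task is to exhibit a point $y$ at which $|B(y)|$ is extremely close to one, namely $1-|B(y)|\lesssim\delta^2e^{-2d_\D(0,y)}$.

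The candidate for $y$ comes from the flow of Lemma~\ref{lem:geometry} run in the reverse direction. Choose $y$ so that $\tau_w(y)$ lies in the low sublevel set $E_s$. Then $|G(y)|=|\tau_{\Theta(w)}(\Theta(\tau_w y))|$ is within $O(\delta)$ of one.

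To improve this crude bound to the required one, I intend to use the annular covering property proved in the appendix for $(\CLS)$ functions. My reading of that property is that each component of the region $\{r<|\Theta|<1\}$ is mapped by $\Theta$ as a finite covering of an annulus, with uniformly controlled hyperbolic geometry. This should upgrade the bound $\Delta_s\le d_\D\le c^{-1}\Delta_s$ of \eqref{eq:distance} to the additive estimate
\[
d_\D(w,E_s)\le\Delta_s(w)+C .
\]
Along a path that is almost a hyperbolic geodesic and is mapped almost isometrically, the Schwarz--Pick defect of $B$ then accumulates only to order $\delta^2$.

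\emph{Main obstacle.} The hardest step is the last one: turning the topological covering information into the sharp quantitative bound $1-|B(y)|\lesssim\delta^2e^{-2d_\D(0,y)}$. A purely multiplicative derivative bound, as in Proposition~\ref{prop:rank-one-derivative}, recovers only the linear bound, so additive control of the inverse path lengths is essential.

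If this direct route resists, the fallback is a sharper rank-one estimate in the spirit of Proposition~\ref{prop:least-symbol}. There I would compare $\|\cdot\|_\infty$ rather than $\|\cdot\|_2$ against the quantitative symbol bound from the annular covering. Aleksandrov's uniform kernel estimate \cite[Theorem~1.2]{Aleksandrov} would supply the needed $L^\infty$ control.
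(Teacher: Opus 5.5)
Your sufficiency argument and your computation for $\Theta(z)=z^2$ agree with the paper. The necessity direction has a genuine gap. You leave the decisive step open, and the route you sketch toward it cannot close it.

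\textbf{Why the flow route fails.} Put $y'=\tau_w(y)$. Your requirement $1-|B(y)|\lesssim\delta^2e^{-2d_\D(0,y)}$, with $|B(y)|=|G(y)|/|y|$, forces
\[
 d_\D(w,y')-d_\D(\Theta(w),\Theta(y'))\lesssim\delta^2 .
\]
So $\Theta$ may shorten the distance from $w$ to $y'$ by at most $O(\delta^2)$. Choosing $y'\in E_s$ makes this impossible in general, already for $\Theta(z)=z^2$. Take the geodesic $\gamma$ from $w$ to $y'$. Then $d_\D(\Theta(w),\Theta(y'))\le\int_\gamma\eta_\Theta\,ds$, so the defect is at least $\int_\gamma(1-\eta_\Theta)\,ds$. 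Any such $\gamma$ crosses a fixed annulus $\{\sqrt s\le|z|\le s_1\}$ on which $\eta_\Theta$ stays below one, so the defect is bounded below by a constant depending only on $s$. For the same reason, the additive estimate $d_\D(w,E_s)\le\Delta_s(w)+C$ you aim for would give nothing: an $O(1)$ defect yields no lower bound on $|B(0)|$.

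Two smaller points. The annular covering property is not proved in the appendix: Lemma~\ref{lem:annular-symbol-bound} assumes it, and the paper takes it from \cite[Theorem~7.2]{IU}. Also, the covering need not be finite; for a singular inner function with one atom it is a universal covering. Your fallback is not developed. Note that rank-one tests with a uniform constant give only the linear rate of Proposition~\ref{prop:rank-one-derivative}, or the rate in Lemma~\ref{lem:log-derivative}.

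\textbf{The missing idea.} Use the covering infinitesimally, through hyperbolic densities; no path or flow is needed. Let $A_r=\{r<|\zeta|<1\}$ be an annulus over which $\Theta$ is a covering, and let $V$ be a component of $\Theta^{-1}(A_r)$. Then $\Theta:V\to A_r$ is a local isometry of hyperbolic metrics, and $\lambda_V\ge\lambda_\D$ because $V\subset\D$. Hence $\lambda_{A_r}(\Theta(z))|\Theta'(z)|=\lambda_V(z)\ge\lambda_\D(z)$, which is exactly
\[
 \eta_\Theta(z)\ge\frac{\lambda_\D(\Theta(z))}{\lambda_{A_r}(\Theta(z))}
 =\frac{L}{\pi}\frac{\sin(\pi u/L)}{\sinh u}=1-O(u^2),
 \qquad u=\log\frac1{|\Theta(z)|},\quad L=\log\frac1r .
\]
Together with $1-|\Theta(z)|^2\sim2u$, this gives the quadratic bound. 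Your Schwarz--Pick scheme can be rescued only by taking $y'$ on the lift of a short radial segment, with $1-|\Theta(y')|$ comparable to $\delta$, so that the density discrepancy integrates to $O(\delta^2)$. That is just an integrated form of this pointwise comparison, not the flow to $E_s$.
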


The optimal exponent is already determined by the example $\Theta(z)=z^2$:
\[
\frac{1-\eta_\Theta(z)^2}{(1-|\Theta(z)|^2)^2}
=\frac{1}{(1+|z|^2)^4}\to\frac1{16}
\qquad (|z|\to1).
\]
For any larger exponent the corresponding quotient is unbounded.

For the quadratic criterion, we need to determine how quickly $\eta_\Theta$ approaches one as $|\Theta|$ approaches one. A positive lower bound already gives connectedness by Lemma~\ref{lem:geometry}. To obtain the rate in the other direction, we use the annular covering characterization of one-component inner functions \cite[Theorem 7.2]{IU}, based on Cohn's covering theorems \cite[Theorems 1.1 and 1.2]{Cohn}. This says that, above a sufficiently high level, $\Theta$ is a covering of an annulus. Here $\Theta$ maps onto the annulus, and each point of the annulus has a neighborhood $U$ such that $\Theta$ maps every component of $\Theta^{-1}(U)$ biholomorphically onto $U$. Hence we can continue a local inverse branch along any path in the annulus. We use invariance of the hyperbolic metric under coverings to compare the annulus and disk densities, and expansion of the resulting lower bound for $\eta_\Theta$ gives the quadratic term in the theorem.

A metric density $\lambda_V$ specifies the length element $\lambda_V(z)|dz|$ on a domain $V$. We use the hyperbolic metric facts in \cite[Theorems 10.4 and 10.5 and Section 12.2]{BM}, with densities divided by two to match our normalization.

\begin{proof}[Proof of Theorem~\ref{thm:quadratic}] If \eqref{eq:quadratic} holds, then $\eta_\Theta\ge1/\sqrt2$ whenever $|\Theta|$ is sufficiently close to one, so Corollary~\ref{cor:derivative-criterion} gives $\Theta\in(\CLS)$.

Conversely, suppose $\Theta\in(\CLS)$. By \cite[Theorem 7.2]{IU}, there is $0<r<1$ such that $\Theta$ is a covering over $A_r=\{\zeta:r<|\zeta|<1\}$. For a component $V$ of $\Theta^{-1}(A_r)$, we write $\lambda_V$ for its hyperbolic density. Covering invariance and monotonicity give
\[
\lambda_{A_r}(\Theta(z))|\Theta'(z)|
=\lambda_V(z)\ge\lambda_\D(z),\qquad z\in V.
\]
We put $L=\log(1/r)$ and $u=\log(1/|\Theta(z)|)$. The explicit annulus density \cite[Section 12.2]{BM} and $\lambda_\D(\zeta)=1/(1-|\zeta|^2)$ give
\begin{equation}\label{eq:annulus-bound}
\eta_\Theta(z)\ge
\frac{L}{\pi}\frac{\sin(\pi u/L)}{\sinh u}
=1-\frac{1+\pi^2/L^2}{6}u^2+O(u^4).
\end{equation}
As $u\to0$, we have $1-|\Theta(z)|^2=2u+O(u^2)$. Thus the quotient in \eqref{eq:quadratic} is bounded near $|\Theta|=1$. On the remaining set, its denominator is bounded below and its numerator is at most one, which proves \eqref{eq:quadratic}. Together with the example above, this proves the theorem. \end{proof}

\section{Sharp quantitative symbol bounds}\label{sec:quantitative}

The qualitative characterization says that testing the interior rank-one family suffices to obtain bounded symbols for all operators in $\TT(\Theta)$. We now determine the loss in passing from the test constant $R_*$ to the general constant $M_*$. The upper bound has a logarithmic factor, and finite Blaschke products show that this factor is necessary even in finite dimensions.

We use $X\sim Y$ for two-sided bounds with positive constants independent of the parameters; $\lesssim$ and $\gtrsim$ denote the corresponding one-sided bounds.

\begin{samepage}
\begin{theorem}[Sharp comparison of the symbol constants] \label{thm:sharp-rank-one} There is an absolute constant $C>0$ such that, whenever $R_*(\Theta)<\infty$,
\begin{equation}\label{eq:sharp-rank-one}
 R_*(\Theta)\le M_*(\Theta)
 \le C R_*(\Theta)\log(e+R_*(\Theta)).
\end{equation}
For $N\ge2$, put
\[
 a_N=1-\frac1N,\qquad
 \Theta_N(z)=\frac{a_N-z^N}{1-a_Nz^N}.
\]
With absolute comparison constants,
\begin{equation}\label{eq:sharp-family-intro}
 R_*(\Theta_N)\sim\frac{N}{\log N},\qquad
 M_*(\Theta_N)\sim N.
\end{equation}
Thus the upper bound cannot be replaced by $o(R_*\log R_*)$ as $R_*\to\infty$. \end{theorem}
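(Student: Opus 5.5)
The inequality $R_*\le M_*$ is immediate from the definitions. The work lies in the upper bound and in the two-sided estimates for $\Theta_N$. For the upper bound I would go through the annular covering picture used in Theorem~\ref{thm:quadratic} together with Appendix~\ref{app:annular-symbol}. The appendix bounds $M_*(\Theta)$ by an absolute constant times $1/L$, where $L=\log(1/r)$ is the width of an annulus $A_r$ over which $\Theta$ is a covering. The task therefore reduces to showing that $R=R_*(\Theta)<\infty$ forces $\Theta$ to be a covering over some $A_r$ with $L\gtrsim 1/(R\log(e+R))$.

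A covering over $A_r$ requires $\Theta'\neq0$ on $\{|\Theta|>r\}$, together with path lifting, which the flow of Lemma~\ref{lem:geometry} provides once $\eta_\Theta$ is bounded below there. Proposition~\ref{prop:rank-one-derivative} alone only gives $\eta_\Theta\ge 1/2$ when $1-|\Theta|\lesssim R^{-2}$, which yields $M_*\lesssim R^2$. To improve this, I would use the exact inequality \eqref{eq:eta-exact} on a wider range. It says that $1-\eta^2\lesssim R^2(1-x)$ only needs to keep $\eta$ away from zero. Writing $\delta=1-|\Theta|$, the bound $\eta^2\ge 1-R^2\delta$ is useless for $\delta>R^{-2}$.

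The sharpening I would try first combines the linear bound near the circle with hyperbolic geometry along the lifted trajectories. Where $\delta\lesssim R^{-2}$, one has $\eta\ge1/2$. On this thin region the flow covers an annulus of width $\sim R^{-2}$. Next, compare with the annulus density \eqref{eq:annulus-bound}: on a covering of $A_r$, $\eta_\Theta$ is at least the annulus-to-disk density ratio. Then bootstrap, enlarging $r$ dyadically in $\log$-scale. Each step doubles the width as long as a critical value has not been reached, and the rank-one estimate at a near-critical point rules out critical values at relative distance below $\sim 1/(R\log R)$.

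This bootstrap is the main obstacle. The logarithm should come from summing the contributions over the roughly $\log R$ dyadic scales between $R^{-2}$ and $1/R$. If the bootstrap fails, I would instead use a second family of test operators, namely the rank-one operators of Theorem~\ref{thm:main} evaluated at points whose hyperbolic distance from a critical point is of order $\log R$. I would then use the distance bounds \eqref{eq:distance} to convert the symbol bound into a lower bound on $1-|c|$ for every critical value $c$.

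For the family $\Theta_N=\tau_{a_N}\circ z^N$, the only critical point is $0$, with critical value $a_N=1-1/N$. The map is a covering over $A_{a_N}$ of width $\sim 1/N$, so the appendix gives $M_*\lesssim N$. For the lower bound $M_*\gtrsim N$, I would build an explicit operator from the $N$-fold symmetry. The model space splits into subspaces of the form $z^jK_{\Theta_{a}(z^N)}$, and I would take a Toeplitz-type operator whose least symbol, computed by orthogonal projection onto $N_F^\perp$ as in Proposition~\ref{prop:least-symbol} but dualized via weak factorization, costs $\sim N$ relative to its norm. For $R_*(\Theta_N)$, I would evaluate Proposition~\ref{prop:least-symbol} at every $w$ to get the $L^2$ lower bound, optimized over $w$ near the critical point: $|\Theta_N(w)|^2=x$ with $\eta$ explicit gives roughly $N/\log N$ at the optimal $|w|$. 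The matching upper bound would come from explicit symbols built from the covering structure, via a quantitative version of the appendix argument applied only to rank-one functionals. The final sentence of the theorem then follows by comparing the two asymptotics.
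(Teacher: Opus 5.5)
\textbf{There are genuine gaps in both halves.} The upper-bound bootstrap is missing its key ingredient, and for the family $\Theta_N$ the proposed route to $R_*\gtrsim N/\log N$ fails, while the bounds the sharpness clause needs are left without any argument.

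\textbf{Upper bound.} Your reduction to a covering over $A_\ell$ with $\ell\gtrsim 1/(R\log(e+R))$, followed by Lemma~\ref{lem:annular-symbol-bound}, is the paper's. The bootstrap you propose cannot produce that width.
\begin{itemize}
\item The annulus-density bound \eqref{eq:annulus-bound} only gives information inside an annulus over which $\Theta$ is already a covering, and it degenerates at the inner edge. So it never pushes the covering outward.
\item Enlarging the annulus needs new input from $R_*$ on the larger region. Absence of critical values is not enough either, since asymptotic values also obstruct coverings; the paper lifts paths using $\eta_\Theta\ge1/2$ on the whole annulus.
\item The only input you name is the rank-one estimate of Proposition~\ref{prop:least-symbol}, i.e.\ \eqref{eq:eta-exact}. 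This is an $L^2$ bound and is stuck at the scale $R^{-2}$: at a critical point it gives only $1-|\Theta|^2\ge2/(1+R^2)$.
\item No argument whose only use of $R_*(\Theta)=R$ is \eqref{eq:eta-exact}, at any evaluation points (your fallback near critical points included), can give $M_*\lesssim R\log R$. Take $\Theta_a$ with $1-a=\varepsilon$. Then $Q(\Theta_a)=1/(1-a^2)$, so \eqref{eq:eta-exact} holds everywhere with $R^2$ replaced by $1+2Q(\Theta_a)\sim\varepsilon^{-1}$. Yet $M_*(\Theta_a)\ge R_*(\Theta_a)\gtrsim 1/(\varepsilon\log(1/\varepsilon))$, which is of order $R^2/\log R$ for that $R$.
\end{itemize}
The missing idea is to bound the $L^\infty$ norm of a symbol of $R_0$ from below by $L^1$ duality: $|H(0)|\le b_\Theta(R_0)\norm{H}_1$ for $H=\sum_j f_jg_j$ with $f_j,g_j\in K_\Theta$. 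You then test this against a specific product sum. Put $\Theta(0)=a$, $s=1-a^2$, $u=(\Theta-a)/(1-a\Theta)=zv$, $c=v(0)$, $h=\sqrt s/(1+au)$ and $p=z-\bar c u$. Then $F=(1-\bar cv)/(1+au)=s^{-1}h^2(1-\bar cv+apv)$ is a sum of three products, with $F(0)=1-|c|^2$ and $\norm{F}_1\lesssim\log(e/s)$. This gives $1-\eta_\Theta\le CR_*s\log(e/s)$ everywhere (Lemma~\ref{lem:log-derivative}). Hence $\eta_\Theta\ge1/2$ on all of $\{|\Theta|>e^{-\ell}\}$ in one step, with no bootstrap.

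\textbf{The family $\Theta_N$.} Your argument for $M_*(\Theta_N)\lesssim N$ is correct and more direct than the paper's, which deduces it from the general upper bound. Indeed $\Theta_N=\tau_{a_N}\circ z^N$ is a covering over $\D\setminus\{a_N\}$, hence over $A_\ell$ with $\ell=\min\{1/N,1/8\}$. The remaining estimates are not established.
\begin{itemize}
\item Proposition~\ref{prop:least-symbol} cannot give $R_*(\Theta_N)\gtrsim N/\log N$. At $w$, its lower bound for $(b(R_w)/\norm{R_w})^2$ is $[(1-x)^2+4x(1-\eta^2)]/(1-x^2)$. For $\Theta_N$ one checks $1-\eta^2\le 2(1-|w|^{2N})$ and $1-x\ge(1-a_N^2)(1-|w|^{2N})/4$, so this never exceeds $1+32N$. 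You only get $R_*(\Theta_N)\gtrsim\sqrt N$.
\item The bounds $R_*(\Theta_N)\lesssim N/\log N$ and $M_*(\Theta_N)\gtrsim N$ are exactly what the final sharpness sentence needs. You give no concrete operator, symbol or test function for either, and the appendix argument yields only $1/\ell\sim N$ even for rank-one operators.
\end{itemize}
The paper's tool is an exact $L^1$ duality. In the basis $hz^j$ with $h=\sqrt{1-a^2}/(1-az^N)$, $\TT(\Theta_{a,N})$ is the space of Toeplitz matrices, and sums of products are the functions $h^2F$ with $\deg F\le 2N-2$. Hahn--Banach then gives \eqref{eq:rank-polynomial-duality} and $M_*\ge\norm{F}_{L^1(m)}/D_{a,N}(F)$, where $D_{a,N}(F)=\int_\T|h|^2|F|\,dm$. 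Three test choices finish the job:
\begin{itemize}
\item $F=1-z^N$ gives $R_*\ge 1/V_a$.
\item Averaging over the $N$th roots of each $u\in\T$ gives $R_*\le 4/J_a$, with $V_a\sim J_a\sim\varepsilon\log(1/\varepsilon)$.
\item $F_N=(1-z^N)(1-z^{N-1})/(1-z)$ has $\norm{F_N}_{L^1(m)}\gtrsim\log N$ but $D_{a,N}(F_N)\lesssim\varepsilon\log(N/\varepsilon)$. For $\varepsilon=1/N$ this gives $M_*(\Theta_N)\gtrsim N$.
\end{itemize}
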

\end{samepage}

We put $R=R_*(\Theta)<\infty$. By \eqref{eq:main-derivative}, the least $L^2$ symbol calculation gives $\eta_\Theta\ge1/2$ whenever $|\Theta|>e^{-\ell}$, with $\ell=cR^{-2}$ for a small absolute constant $c>0$. The covering argument below and Lemma~\ref{lem:annular-symbol-bound} would then give only $M_*(\Theta)\le CR^2$. To improve this bound, we test a sum of three products of model space functions, and use the resulting hyperbolic derivative estimate with a logarithmic factor to obtain a wider annulus in the covering argument. We then estimate weighted polynomial norms for the finite Blaschke family in Theorem~\ref{thm:sharp-rank-one}.

\subsection{The upper bound}

\begin{lemma}[Hyperbolic derivative estimate with a logarithmic factor]\label{lem:log-derivative} There is an absolute constant $C$ such that, for every nonconstant inner function $\Theta$ with $R_*(\Theta)<\infty$,
\begin{equation}\label{eq:log-derivative-estimate}
 1-\eta_\Theta(w)
 \le C R_*(\Theta)(1-|\Theta(w)|^2)
       \log\frac{e}{1-|\Theta(w)|^2},
 \qquad w\in\D.
\end{equation}
\end{lemma}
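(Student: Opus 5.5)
By Lemma~\ref{lem:automorphisms} we may move $w$ to the origin. Put $F=\Theta\circ\tau_w$, $a=F(0)$, $\delta=1-|a|^2$ and $\eta=|F'(0)|/\delta=\eta_\Theta(w)$, and write $R=R_*(\Theta)$. The claim then becomes $1-\eta\le CR\,\delta\log(e/\delta)$. We may assume $\delta$ is small, since otherwise the right-hand side is at least a fixed multiple of $R\ge1$ and there is nothing to prove. Instead of the $L^2$ minimization of Proposition~\ref{prop:least-symbol}, which loses a square and gives only the linear bound of Proposition~\ref{prop:rank-one-derivative} with $R^2$, I would use $L^1$--$L^\infty$ duality. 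Let $\psi$ be a bounded symbol of $R_F$ with $\norm\psi_\infty\le R\norm{R_F}=R\delta$; it exists by attainment of $b_F$. For any $f_j,g_j\in K_F\cap H^\infty$ and $\Phi=\sum_j f_j\overline{g_j}$ we have
\[
 \sum_j f_j(0)\,\overline{\ip{g_j}{\widetilde k_0^F}}
 =\sum_j\ip{R_Ff_j}{g_j}=\int_\T\psi\,\Phi\,dm,
\]
so the modulus of the left-hand side is at most $R\delta\norm\Phi_1$. The task is to choose three products making the left side comparable to $(1-\eta)$ times a normalizing factor $N$, while $\norm\Phi_1\lesssim N\delta^{-1}\cdot\delta\log(e/\delta)$. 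Equivalently, after normalization, the ratio (pairing)/$\norm\Phi_1$ should be $\gtrsim(1-\eta)/(\delta\log(e/\delta))$.

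To find the test functions I would pass to the Frostman shift $G=(F-a)/(1-\overline aF)$. This is inner with $G(0)=0$ and $|G'(0)|=\eta$. The Crofoot transform $f\mapsto\sqrt{\delta}\,f/(1-\overline aF)$ maps $K_G$ unitarily onto $K_F$. In the $G$-picture the natural rank-one data at the origin are $1$, $\overline z G$ and the coefficient $G'(0)$. The defect $1-\eta$ appears as the first pairing that fails to vanish. The model calculation is $\int_\T\overline z(G-\overline{G'(0)}\,z\ldots)\,dm$-type identities analogous to those in the proof of Proposition~\ref{prop:least-symbol}. So I would take $f_j,g_j$ to be Crofoot images of $1$, $k_0^G=1$, $\widetilde k_0^G=\overline zG$ and of $(G-G'(0)z)/z^2$, the latter projected into $K_G$. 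The three products are chosen so that, after transport, the constant term cancels. What survives in the pairing is $c\,(1-|G'(0)|^2)\ge c(1-\eta)$, up to the Crofoot normalization.

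Transporting back multiplies $\Phi$ by the weight $\delta/|1-\overline aF|^2$. The key estimate is the $L^1$ bound for the transported products. Products of two Crofoot images only give $\int_\T\delta|1-\overline aF|^{-2}dm=1$ with no gain. The gain has to come from the cancellation built into the combination: the bracket vanishes to first order where $F\approx a/|a|$, leaving a factor $|1-\overline aF|$. One then needs
\[
 \int_\T\frac{\delta\,|h|}{|1-\overline aF|}\,dm\lesssim\delta\log\frac e\delta
\]
for a bounded $h$. Since $F$ pushes $m$ forward to harmonic measure at $a$, this integral equals $\int_\T\delta|h^*|/|1-\overline a\zeta|\cdot P_a(\zeta)\,dm$, and $\int P_a/|1-\overline a\zeta|\,dm\sim\delta^{-1}\log(e/\delta)$ gives exactly the logarithm. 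This requires $h$ to be a function of $F$ alone, or else a subordination or Cauchy--Schwarz argument against $P_a$ that does not lose a power.

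Combining the two estimates gives $1-\eta\lesssim R\,\delta\log(e/\delta)$. I expect the main obstacle to be the choice of the three products. They must simultaneously be honest elements of $K_F$, so that the pairing identity is legitimate and the conjugate-kernel inner products are computable in closed form. They must also have their $L^1$ mass controlled by a function of $F$ alone, so that the Poisson--harmonic-measure computation applies. If the direct cancellation is not clean, I would first try the symmetric combination $\Phi=\overline z(F-a)(1-a\overline F)\cdot(1-\overline aF)^{-1}\cdots$, modelled on the vectors $u,v$ and the correction $s_F$ from Proposition~\ref{prop:least-symbol}, and check its $L^1$ norm with the weight $|1-\overline aF|^{-1}$.
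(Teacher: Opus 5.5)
Your framework is the paper's. You test $R_0$ against sums of products; your sesquilinear pairing is equivalent, via $C_F$, to the bound $|H(0)|\le b_\Theta(R_0)\norm H_1$ for $H=\sum f_jg_j$. You then pass to the Frostman shift $G$ with $G(0)=0$, transport by the Crofoot multiplier, and take the logarithm from $\int_\T|1+a\zeta|^{-1}\,dm$.

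\textbf{The decisive step is missing.} As you acknowledge, the three products and the cancellation are never written down. After a rotation making $a\in[0,1)$, the paper sets $G=zv$, $c=v(0)$ (so $|c|=\eta$) and $p=z-\bar cG=P_{K_G}z$. Then $1$, $v=\widetilde k_0^G$ and $p$ lie in $K_G$; your $(G-G'(0)z)/z^2$ is $C_Gp$ and is already in $K_G$. Because $zv=G$,
\[
 1-\bar cv+apv=(1-\bar cv)(1+aG).
\]
Let $h=\sqrt\delta/(1+aG)$, the Crofoot multiplier from $K_G$ onto $K_F$. The three products $h\cdot h$, $h\cdot hv$, $hp\cdot hv$ of functions in $K_F$ combine to $(h^2/\delta)(1-\bar cv+apv)=(1-\bar cv)/(1+aG)$. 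This has value $1-|c|^2$ at $0$ and modulus at most $(1+|c|)/|1+aG|$ on $\T$. It also removes your worry about needing a function of $F$ alone. The leftover factor $1-\bar cv$ is simply bounded by $1+|c|$, and the rest depends only on $G$, whose boundary map preserves $m$. Hence $1-|c|^2\le R\delta(1+|c|)\int_\T|1+a\zeta|^{-1}\,dm\lesssim R\delta(1+|c|)\log(e/\delta)$, which is the lemma after dividing by $1+|c|$.

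\textbf{Your weight bookkeeping is wrong, and followed literally it gives no gain.} The map $f\mapsto\sqrt\delta f/(1-\bar aF)$ sends $K_F$ onto $K_G$, not the reverse. Since $1+\bar aG=\delta/(1-\bar aF)$, transport from $K_G$ to $K_F$ multiplies $f\bar g$ by $\delta/|1+\bar aG|^2=|1-\bar aF|^2/\delta$, not by $\delta/|1-\bar aF|^2$. Three consequences follow:
\begin{enumerate}
\item $\int_\T\delta|1-\bar aF|^{-2}\,dm\sim\delta^{-1}$, not $1$.
\item The density is large near $F=-a/|a|$, not $a/|a|$.
\item The integral you rely on has no logarithm:
\[
 \int_\T\frac{P_a(\zeta)}{|1-\bar a\zeta|}\,dm(\zeta)
 =\delta\int_\T\frac{dm(\zeta)}{|1-\bar a\zeta|^3}\sim\frac1\delta .
\]
So your displayed estimate would only give $O(1)$.
\end{enumerate}
With the correct weight, cancelling one denominator factor leaves $\delta/|1+\bar aG|=|1-\bar aF|$. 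Then $\int_\T|1-\bar aF|\,dm=\delta\int_\T|1-\bar a\zeta|^{-1}\,dm\lesssim\delta\log(e/\delta)$, and this is where the logarithm really comes from. Your harmonic-measure route works once the weight is corrected, though pushing forward by $G$, which preserves $m$, is simpler.
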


The second estimate gives a symbol bound in terms of the width of a covering annulus.

\begin{lemma}[Symbols from an annular covering]\label{lem:annular-symbol-bound} There is an absolute constant $C$ with the following property. Here $\theta$ is a nonconstant inner function and $0<\ell\le1/8$. If
\[
 \theta:\theta^{-1}(A_\ell)\to A_\ell,
 \qquad A_\ell=\{\zeta:e^{-\ell}<|\zeta|<1\},
\]
is a covering map, then
\[
 M_*(\theta)\le \frac C\ell.
\]
\end{lemma}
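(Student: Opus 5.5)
The plan is to argue by duality, following the route from weak factorization to bounded symbols in \cite[Theorem~2.3]{BBK}, while keeping track of every constant in terms of $\ell$. For $A\in\TT(\theta)$ define the functional $L_A$ on finite sums $F=\sum_j f_j\overline{g_j}$, with $f_j,g_j\in K_\theta\cap H^\infty$, by $L_A(F)=\sum_j\ip{Af_j}{g_j}$. A symbol $\psi$ satisfies $\ip{Af}{g}=\int_\T\psi f\overline g\,dm$ for all such $f,g$. By Hahn--Banach and $(L^1)^*=L^\infty$, we get $b_\theta(A)\le M$ as soon as $|L_A(F)|\le M\norm{A}\norm{F}_1$ for every such $F$.

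Multiplying by the unimodular function $z\overline\theta$ and using the conjugation $C_\theta$ turns $f\overline g$ into $f\cdot C_\theta g$. So it suffices to prove a quantitative weak factorization for the space in Theorem~\ref{thm:full-characterization}(vii): every $F\in H^1\cap\bar z^{\,2}\theta^2\overline{H^1}$ is a sum $\sum_j f_jg_j$ with $f_j,g_j\in K_\theta$ and
\[
\sum_j\norm{f_j}_2\norm{g_j}_2\le \frac{C}{\ell}\norm F_1.
\]
Then $|L_A(F)|\le\norm A\sum_j\norm{f_j}_2\norm{g_j}_2$ gives $M_*(\theta)\le C/\ell$.

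To obtain the factorization with this constant, I would first convert the covering hypothesis into pointwise geometry. The argument in the proof of Theorem~\ref{thm:quadratic} already yields, on $V=\theta^{-1}(A_\ell)$, the bound $\eta_\theta\ge\frac{\ell}{\pi}\frac{\sin(\pi u/\ell)}{\sinh u}$ with $u=\log(1/|\theta|)$. Hence $\eta_\theta\gtrsim1$ on the middle subannulus $e^{-3\ell/4}<|\theta|<e^{-\ell/4}$. Lifting the radial segments of $A_\ell$ through the covering, each boundary point of $\T$ (where $|\theta|=1$) lies at the end of a lifted path that crosses into the region $|\theta|\approx e^{-\ell/2}$. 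That region lies at hyperbolic distance $\lesssim\ell$ from the boundary level, measured in the annulus metric.

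From this I would derive Aleksandrov-type kernel bounds with explicit constants. The target is $\norm{k_w^\theta}_\infty\lesssim\ell^{-1}\norm{k_w^\theta}_2^2$ for $w$ with $|\theta(w)|\le e^{-\ell/2}$. I would then use a Cohn-type embedding: $\mathcal C_2(\theta^2)\subset\mathcal C_1(\theta^2)$ with norm growth $\lesssim 1/\ell$, tested on Carleson boxes attached to the lifted sublevel set $\{|\theta|<e^{-\ell/2}\}$. The equivalence (vi)$\Leftrightarrow$(vii) in \cite[Theorem~2.4, Proposition~4.1]{BBK} is proved by duality and should pass constants through linearly. So the embedding constant $C/\ell$ would produce the factorization constant $C/\ell$.

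The main obstacle is this middle step: getting the linear dependence $1/\ell$ rather than a power such as $\ell^{-2}$. The naive approach through the rank-one estimate loses a square, as noted before Lemma~\ref{lem:log-derivative}. My first attempt would work directly in the covering coordinates. On each component of $V$, write $\theta=e^{-\Phi}$, where $\Phi$ maps the component conformally onto a strip or half-plane of width $\ell$. In these coordinates the Carleson box estimates reduce to a one-dimensional estimate across a strip of width $\ell$, where the $1/\ell$ factor is explicit. This is analogous to the Paley--Wiener case handled by Rochberg \cite{Rochberg}.
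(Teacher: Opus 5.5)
Your opening reduction is correct and is the same as the paper's. By the duality $X_\theta^*=\TT(\theta)$ of \cite[Theorem~2.3]{BBK} and Hahn--Banach, it is enough to prove $\norm f_{X_\theta}\le C\ell^{-1}\norm f_1$ on $X_\theta$, which is your quantitative factorization.

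The gap is that everything after this is asserted rather than proved. The step that carries the whole content of the lemma, the linear dependence on $\ell^{-1}$, is left open by your own account. None of the links in your chain comes with an argument:
an Aleksandrov-type bound $\norm{k_w^\theta}_\infty\lesssim\ell^{-1}\norm{k_w^\theta}_2^2$;
a Cohn-type embedding with constant $\ell^{-1}$;
and a linear transfer of constants through the direction (vi)$\Rightarrow$(vii) of \cite{BBK}.
You give no reason why the constants in Aleksandrov's and Cohn's theorems, which depend on the one-component data in an unspecified way, should be linear in $\ell^{-1}$.

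Your geometric starting point also needs repair. Every point of $A_\ell$ is at infinite distance from the circle $|\zeta|=1$ in the hyperbolic metric of $A_\ell$, so the claimed bound $\lesssim\ell$ is false as stated. Moreover, the components of $\theta^{-1}(A_\ell)$ need not be simply connected. For a finite Blaschke product whose critical values lie in $\{|\zeta|\le e^{-\ell}\}$, the preimage is a single doubly connected region covering $A_\ell$ with finite degree. On it $\log\theta$ has no single-valued branch, so your reduction to a one-dimensional estimate across a strip does not apply as stated.

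The idea you are missing is how the paper makes the factor $\ell^{-1}$ transparent while avoiding embedding theorems altogether. There are four ingredients.

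\emph{Positivity.} By \cite[Proposition~4.1]{BBK}, nonnegative elements of $X_\theta$ have the form $|h|^2$ with $h\in K_\theta$, so their $X_\theta$ and $L^1$ norms coincide. Hence, for real $f$, it suffices to find $G\in X_\theta$ with $G\ge|f|$ and $\norm G_1\le C\ell^{-1}\norm f_1$, since $f=\frac12\bigl((G+f)-(G-f)\bigr)$.

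\emph{Boundary continuation.} The covering is used at the boundary, not in the interior. Lifting $\alpha e^\zeta$ from the half-disk $\{|\zeta|<\ell,\ \Re\zeta<0\}$ and reflecting across $\T$ gives a univalent map on $\{|\zeta|<\ell\}$. Koebe's quarter theorem then shows that $\theta$ continues analytically to $D(t,\ell/(4|\theta'(t)|))$ about each regular boundary point $t$, with $|\theta|\le e^\ell$ there.

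\emph{Phase partition and sampling.} This continuation lets one partition the regular arcs into intervals $I_j$ with phase increments of order $\ell$. Then $|I_j|\sim\ell/v_j$, and $v=|\theta'|$ stays comparable to $v_j$ on $I_j$. Reflecting $\bar z\theta f\in H^1$ across $\T$ on these disks, together with the submean value property and the maximal function, gives $\sum_j|I_j|\sup_{I_j}|f|\le C\norm f_1$ with no loss in $\ell$.

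\emph{The majorant.} Put $G=C_0\sum_j\bigl(\sup_{I_j}|f|\bigr)|k^\theta_{t_j}|^2/v_j^2$. It dominates $|f|$ and satisfies $\norm G_1=C_0\sum_j\sup_{I_j}|f|/v_j\le C\ell^{-1}\sum_j|I_j|\sup_{I_j}|f|$. The loss $\ell^{-1}$ is exactly the ratio of $\norm{k^\theta_{t_j}}_2^2/v_j^2=1/v_j$ to the interval length $|I_j|\sim\ell/v_j$.

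Your outline needs an argument of this strength at its middle step before it proves the lemma.
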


We first combine these lemmas: the derivative bound gives the covering, whose width determines the symbol bound. We then prove the logarithmic estimate; the annular estimate is proved in Appendix~\ref{app:annular-symbol}.

\begin{proof}[Proof of the upper bound in Theorem~\ref{thm:sharp-rank-one}] We write $R=R_*(\Theta)$. Since symbol norm dominates operator norm, $R\ge1$. For a sufficiently small absolute constant $c_0>0$, we put
\[
 \ell=\frac{c_0}{R\log(e+R)}.
\]
Lemma~\ref{lem:log-derivative} implies $\eta_\Theta(z)\ge1/2$ whenever $|\Theta(z)|>e^{-\ell}$. In fact, on this set $s=1-|\Theta(z)|^2\le2\ell$, and
\[
 CRs\log(e/s)\le 2CR\ell\log(e/(2\ell))\le\frac12
\]
after decreasing $c_0$. Indeed, $R\ell\log(e/(2\ell))\le c_0(3+\log(1/c_0))$ for $c_0\le1/2$, uniformly in $R\ge1$.

We next show that $\Theta$ is a covering over $A_\ell$ by using the bound $\eta_\Theta\ge1/2$ to control the lengths of lifted paths. As in \cite[Proposition~3.1]{RS}, we use completeness of the disk metric, together with the relation between continuation of inverse branches and covering maps recalled in \cite[Section~1]{Eremenko}. Here the rank-one estimate determines the width $\ell$, and we include the continuation argument to keep track of this dependence. A piecewise smooth path $\gamma$ in that annulus has a local lift obtained by solving $z'=\gamma'/\Theta'(z)$. The derivative does not vanish above $A_\ell$, and
\[
 \frac{|z'|}{1-|z|^2}
 \le 2\frac{|\gamma'|}{1-|\gamma|^2}.
\]
A piecewise smooth path on a compact parameter interval has finite hyperbolic length in $A_\ell$, so completeness of the disk prevents its lift from escaping in finite time. Thus local inverses continue along every path in $A_\ell$. The image meets this annulus because $\Theta$ is inner, and continuation also shows that it covers the whole annulus. Lemma~\ref{lem:annular-symbol-bound} now gives
\[
 M_*(\Theta)\le\frac C\ell
 \le C'R_*(\Theta)\log(e+R_*(\Theta)).
\]
\end{proof}

\subsection{The hyperbolic derivative estimate with a logarithmic factor}

\begin{proof}[Proof of Lemma~\ref{lem:log-derivative}] Lemma~\ref{lem:automorphisms} and a unimodular rotation reduce us to $w=0$ and $\Theta(0)=a\in[0,1)$. We put
\[
 s=1-a^2,
 \qquad u=\frac{\Theta-a}{1-a\Theta}=zv,
 \qquad c=v(0).
\]
Then $u$ is inner, $v$ is inner or a unimodular constant, and $|c|=\eta_\Theta(0)$.

For any finite sum $H=\sum_j f_jg_j$ with $f_j,g_j\in K_\Theta$, the identity
\[
 \ip{R_0f_j}{C_\Theta g_j}=f_j(0)g_j(0),
 \qquad R_0=\widetilde k_0\otimes k_0,
\]
shows that
\begin{equation}\label{eq:rank-one-product-test}
 |H(0)|\le b_\Theta(R_0)\norm H_1.
\end{equation}
Indeed, a bounded symbol $\psi$ for $R_0$ gives $H(0)=\int_\T\psi z\overline\Theta H\,dm$; take the infimum over such symbols.

Multiplication by $h=\sqrt s/(1+au)$ maps $K_u$ unitarily onto $K_\Theta$ (the Crofoot multiplier, \cite[equation~(11)]{BBK}). For the introductory family $\Theta_a$, we have $u=-z^2$ and $c=0$. Since $h,zh\in K_{\Theta_a}$, the calculation using two products
\[
 H_a=\frac{h^2-a(zh)^2}{s}=\frac1{1-az^2}
\]
cancels one denominator factor. Here $H_a(0)=1$, and its $L^1$ norm is $L(a)=\int_\T|1+a\zeta|^{-1}\,dm(\zeta)$: the boundary map $-z^2$ preserves $m$. In the coordinate $\zeta=-e^{it}$ near $-1$, the integrand is comparable to $((1-a)^2+t^2)^{-1/2}$, whose integral grows logarithmically as $a\to1$.

For general $\Theta$, since $b_\Theta(R_0)\le R_*(\Theta)s$, we seek a product sum with value $1-|c|^2$ at zero and $L^1$ norm at most $C\log(e/s)$. We put
\begin{equation}\label{eq:log-product-test}
 F=\frac{1-\bar c v}{1+au},
\end{equation}
which has the required value. To use \eqref{eq:rank-one-product-test}, we must realize the same cancellation with products from $K_\Theta$.

The functions
\[
 1,\quad v,\quad p=z-\bar c u=P_{K_u}z
\]
belong to $K_u$. Since $u=zv$, we have $1-\bar c v+apv=(1-\bar c v)(1+au)$, and hence
\[
 F=\frac{h^2}{s}(1-\bar c v+apv).
\]
This is the required representation using three products.

Since $u(0)=0$, its boundary map preserves normalized Lebesgue measure. The estimate \eqref{eq:rank-one-product-test}, applied to $F$ with $\norm{R_0}=s$, gives
\[
 1-|c|^2\le R_*(\Theta)s(1+|c|)L(a).
\]
Thus $1-|c|\le R_*(\Theta)sL(a)$. For $a\le1/2$, the integral is bounded by two. For $a>1/2$, the comparison above gives
\[
 L(a)\le C\log\frac e{1-a^2}.
\]
This proves \eqref{eq:log-derivative-estimate}. \end{proof}

\subsection{Sharpness of the logarithmic loss} \label{subsec:sharp-log-loss}

The projection $P_{z^N}f$ is the Taylor polynomial of $f\in H^2$ of degree at most $N-1$. For an arbitrary inner function $\Theta$, the unique orthogonal decomposition
\begin{equation*}
 f=r+\Theta g,\qquad r=P_\Theta f\in K_\Theta,\quad g\in H^2
\end{equation*}
still identifies $r$ as a remainder, although it need not be a polynomial.

A finite Blaschke product of degree $N$ has the form $\Theta(z)=e^{i\alpha}\prod_{j=1}^N(z-a_j)/(1-\overline{a_j}z)$, where $\alpha\in\mathbb R$, $a_j\in\D$, and zeros are counted with multiplicity. Its model space has dimension $N$. If the zeros are distinct, $P_\Theta f$ is the unique element of $K_\Theta$ agreeing with $f$ at all $N$ zeros.

For $N\ge2$ and $1/2\le a<1$, we use the finite Blaschke products
\[
 \Theta_{a,N}(z)=\frac{a-z^N}{1-az^N}.
\]
In this family, $R_*$ measures normalized point evaluation on polynomials equipped with a weighted $L^1$ norm. A lower bound for $M_*$ comes from comparing their unweighted and weighted $L^1$ norms. With $1-a=1/N$, these estimates give $M_*\gtrsim N$ and $R_*\sim N/\log N$.

The relation between polynomial factorization and bounded extensions of finite Toeplitz matrices is discussed in \cite{Volberg}.

\begin{proof}[Proof of sharpness in Theorem~\ref{thm:sharp-rank-one}] \emph{Step 1: weighted polynomial duality.} We put
\[
 h(z)=\frac{\sqrt{1-a^2}}{1-az^N}.
\]
On $\T$, we put $W=|h|^2$. The functions $e_j(z)=h(z)z^j$, $0\le j<N$, form an orthonormal basis of $K_{\Theta_{a,N}}$. In this basis,
\[
 \langle A_\psi e_k,e_j\rangle
 =\int_\T\psi W z^{k-j}\,dm.
\]
Thus the truncated Toeplitz operators are precisely the Toeplitz matrices. Indeed, every Toeplitz matrix has a trigonometric polynomial symbol $\varphi$ in the basis $1,z,\ldots,z^{N-1}$, and $\varphi/W$ is a bounded symbol for the same matrix in the basis $(e_j)$.

For a polynomial $F$ of degree at most $2N-2$, we put
\[
 D_{a,N}(F)=\int_\T W|F|\,dm,
 \qquad S_N(r)=\sum_{j=0}^{N-1}r^{2j}.
\]
We claim that
\begin{equation}\label{eq:rank-polynomial-duality}
 R_*(\Theta_{a,N})
 =\sup_{\substack{F\ne0\\\deg F\le2N-2}}
 \frac{\displaystyle\sup_{w\in\D}
             |F(w)|/S_N(|w|)}{D_{a,N}(F)}.
\end{equation}
To see this, we write $\Theta=\Theta_{a,N}$ and let $C_\Theta f=\Theta\bar z\bar f$ be the model space conjugation. Products of two elements of $K_\Theta$ span exactly the functions $h^2F$ under consideration. The identities
\[
 \langle R_w f,C_\Theta g\rangle=f(w)g(w),
 \qquad
 z\bar\Theta h^2F=-Wz^{1-N}F\quad\text{on }\T
\]
show that a symbol for $R_w$ represents the functional
\[
 -Wz^{1-N}F\mapsto h(w)^2F(w)
\]
on this finite-dimensional subspace of $L^1(m)$. Conversely, any $L^\infty$ representation of this functional is a symbol for $R_w$, by the same pairing identity. By Hahn--Banach,
\[
 b_\Theta(R_w)
 =|h(w)|^2\sup_{\substack{F\ne0\\\deg F\le2N-2}}
             \frac{|F(w)|}{D_{a,N}(F)}.
\]
Since $\norm{R_w}=\norm{k_w}^2 =|h(w)|^2S_N(|w|)$, this proves \eqref{eq:rank-polynomial-duality}.

\smallskip\noindent\emph{Step 2: the rank-one symbol constant.} With $P_a(u)=(1-a^2)/|1-au|^2$, we put
\[
 V_a=\int_\T P_a(u)|1-u|\,dm(u),
 \quad
 J_a=\int_\T P_a(u)|\operatorname{Im}u|\,dm(u).
\]
We next prove the estimates, uniform in $N$,
\begin{equation}\label{eq:rank-uniform-family}
 \frac1{V_a}\le R_*(\Theta_{a,N})\le\frac4{J_a}.
\end{equation}
The lower bound follows from \eqref{eq:rank-polynomial-duality} with $F(z)=1-z^N$ and $w=0$, since $D_{a,N}(F)=V_a$.

For the upper bound, we write $F=P+z^NQ$, where $\deg P\le N-1$ and $\deg Q\le N-2$. For $u\in\T$, interpolation at the roots $\zeta_j^N=u$ gives
\[
 P(w)+uQ(w)
 =\frac1N\sum_{j=1}^N F(\zeta_j)
                      \sum_{k=0}^{N-1}(w\bar\zeta_j)^k.
\]
For $0\le r<1$, $\sum_{k=0}^{N-1}r^k\le2S_N(r)$. We integrate over $\T$ by first averaging over the $N$ roots of $\zeta^N=u$, and then integrating in $u$. This gives
\[
 \begin{split}
 D_{a,N}(F)
 &=\int_\T P_a(u)\frac1N
                 \sum_{\zeta^N=u}|F(\zeta)|\,dm(u)\\
 &\ge\frac1{2S_N(|w|)}
       \int_\T P_a(u)|P(w)+uQ(w)|\,dm(u).
 \end{split}
\]
For any $x,y\in\mathbb C$, pairing $u=e^{it}$ with $\bar u=e^{-it}$ and using the triangle inequality yields
\[
 \int_\T P_a(u)|x+uy|\,dm(u)
 \ge J_a\max\{|x|,|y|\}
 \ge\frac{J_a}{2}(|x|+|y|).
\]
The first inequality follows for $|y|$ by subtracting the two expressions inside the absolute values; the inequality for $|x|$ follows after multiplication by $\bar u$. Hence
\[
 D_{a,N}(F)\ge\frac{J_a}{4S_N(|w|)}|F(w)|,
\]
which proves \eqref{eq:rank-uniform-family}.

We put $\varepsilon=1-a$. The comparisons
\[
 P_a(e^{it})\sim\frac{\varepsilon}{\varepsilon^2+t^2}
 \quad (|t|\le\pi),
 \qquad |1-e^{it}|\sim |t|
\]
give
\begin{equation}\label{eq:rank-family-asymptotic}
 V_a\sim J_a\sim
 \varepsilon\log(1/\varepsilon),
 \qquad
 R_*(\Theta_{a,N})\sim
 \frac1{\varepsilon\log(1/\varepsilon)}.
\end{equation}
For $J_a$, we use $|\sin t|\sim|t|$ on $|t|\le\pi/2$; the remaining integral is $O(\varepsilon)$. All constants are absolute for $0<\varepsilon\le1/2$.

\smallskip\noindent\emph{Step 3: the general symbol constant.} To estimate the general constant from below, we use
\begin{equation}\label{eq:global-polynomial-lower}
 M_*(\Theta_{a,N})\ge
 \frac{\norm{F}_{L^1(m)}}{D_{a,N}(F)}
 \qquad (0\ne F,\ \deg F\le2N-2).
\end{equation}
Indeed, let $q=z^{1-N}F$ on $\T$ and set $\varphi=\bar q/|q|$, with value zero where $q=0$. The compression of multiplication by $\varphi$ to $\operatorname{span}\{1,z,\ldots,z^{N-1}\}$ gives a Toeplitz matrix $A$ of norm at most one. If $\psi$ is any symbol for this matrix on $K_{\Theta_{a,N}}$, equality of the matrix entries implies
\[
 \int_\T\psi Wq\,dm
 =\int_\T\varphi q\,dm
 =\int_\T|F|\,dm.
\]
The infimum over $\psi$ gives \eqref{eq:global-polynomial-lower}.

The weight $W$ concentrates near the $N$th roots of unity. To make the weighted norm small, we include the vanishing factor $1-z^N$; the remaining degree permits the geometric sum $(1-z^{N-1})/(1-z)$. Hence we choose
\[
 F_N(z)=(1-z^N)\frac{1-z^{N-1}}{1-z}.
\]
Its degree is $2N-2$, and its unweighted norm grows logarithmically: the comparison
\begin{equation}\label{eq:sharp-polynomial-comparison}
 \left||F_N(e^{it})|
 -\frac{|1-e^{iNt}|^2}{|1-e^{it}|}\right|
 \le |1-e^{iNt}|
\end{equation}
gives
\begin{equation}\label{eq:sharp-polynomial-l1}
 \norm{F_N}_{L^1(m)}\ge c\log N.
\end{equation}
To check the logarithmic term, the integral of the second expression in \eqref{eq:sharp-polynomial-comparison} is $\frac2\pi\log N+O(1)$: replace $|1-e^{it}|^{-1}$ by $|t|^{-1}$ on $[-\pi,\pi]$, at a bounded cost, and use $\int_1^N\cos t\,dt/t=O(1)$. The finitely many small values of $N$ are absorbed by reducing $c$.

For the weighted integral, we set
\[
 g_a(t)=P_a(e^{it})|1-e^{it}|^2.
\]
The bounds
\[
 g_a(t)\le C\varepsilon\quad(t\in\mathbb R),
 \qquad
 g_a(t)\le Ct^2/\varepsilon\quad(|t|\le\varepsilon)
\]
and \eqref{eq:sharp-polynomial-comparison} imply
\[
 \begin{split}
 D_{a,N}(F_N)
 &\le\int_{-\pi}^{\pi}
       \frac{g_a(Nt)}{|1-e^{it}|}\,\frac{dt}{2\pi}+V_a\\
 &\le C\left(
       \int_0^{\varepsilon/N}\frac{N^2t}{\varepsilon}\,dt
       +\int_{\varepsilon/N}^{\pi}\frac{\varepsilon}{t}\,dt
       \right)+V_a\\
 &\le C\varepsilon\left(1+\log\frac{N}{\varepsilon}\right).
 \end{split}
\]
We now take $\varepsilon=1/N$. Equations~\eqref{eq:global-polynomial-lower} and \eqref{eq:sharp-polynomial-l1} give $M_*(\Theta_N)\ge cN$, whereas \eqref{eq:rank-family-asymptotic} gives $R_*(\Theta_N)\sim N/\log N$. The upper bound $M_*(\Theta_N)\le CN$ follows from the upper estimate in Theorem~\ref{thm:sharp-rank-one}, proved above. Finally, $\log(e+R_*(\Theta_N))\sim\log N$, so
\[
 M_*(\Theta_N)\sim
 R_*(\Theta_N)\log(e+R_*(\Theta_N)).
\]
This proves the asserted sharpness. \end{proof}

Equation~\eqref{eq:rank-family-asymptotic} sharpens the lower bound for degree two, and shows that both constants tend to infinity for every fixed $N\ge2$ as $a\to1$. Sarason's earlier result also covers self-adjoint operators \cite[Sections~5 and~7]{Sarason2012}, answering \cite[Section~7, p.~509]{Sarason}.

\subsection{Failure of a reverse geometric estimate}

The geometric defect
\[
 Q(\Theta)=\sup_{z\in\D}
 \frac{1-\eta_\Theta(z)^2}{1-|\Theta(z)|^2}
\]
satisfies $Q(\Theta)\le R_*(\Theta)^2\le M_*(\Theta)^2$ by \eqref{eq:main-derivative}. Finiteness of $Q$ also characterizes the one-component property by Corollary~\ref{cor:derivative-criterion}. The next example shows that the corresponding reverse estimate for the constants need not hold.

\begin{proposition}[Failure of control by the geometric defect] \label{prop:quantitative-failure} There is no absolute constant $C$ such that
\[
 M_*(\Theta)^2\le C(1+Q(\Theta))
\]
for all one-component inner functions. This fails already for Blaschke products of degree two. \end{proposition}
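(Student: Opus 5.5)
The plan is to use the degree-two family $\Theta_a(z)=(a-z^2)/(1-az^2)$ from the introduction, with $a\to1$. We show that $Q(\Theta_a)$ grows only like $(1-a)^{-1}$, while $M_*(\Theta_a)^2$ grows like $\bigl((1-a)\log\frac1{1-a}\bigr)^{-2}$. The ratio $M_*(\Theta_a)^2/(1+Q(\Theta_a))$ is then unbounded. Each $\Theta_a$ is one-component: $\Omega_t(\Theta_a)$ is connected for $a<t<1$ by the computation at the end of Subsection~\ref{sec:geometry}.

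\emph{Step 1: exact computation of $Q(\Theta_a)$.} Write $\Theta_a=\phi_a\circ q$ with $q(z)=z^2$ and $\phi_a(\zeta)=(a-\zeta)/(1-a\zeta)$, a disk automorphism. Since automorphisms preserve hyperbolic lengths, the chain rule gives $\eta_{\Theta_a}(z)=\eta_q(z)=2|z|(1-|z|^2)/(1-|z|^4)=2|z|/(1+|z|^2)$. Hence
\[
 1-\eta_{\Theta_a}(z)^2=\frac{(1-|z|^2)^2}{(1+|z|^2)^2}.
\]
The standard automorphism identity gives
\[
 1-|\Theta_a(z)|^2=\frac{(1-a^2)(1-|z|^4)}{|1-az^2|^2}.
\]
Dividing, the quotient defining $Q$ equals
\[
 \frac{(1-|z|^2)\,|1-az^2|^2}{(1-a^2)(1+|z|^2)^3}.
\]
Using $|1-az^2|\le2$, this is at most $4/(1-a^2)$. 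At $z=0$ it equals $1/(1-a^2)$. So $Q(\Theta_a)\sim(1-a)^{-1}$, and in particular $Q(\Theta_a)\le C(1-a)^{-1}$.

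\emph{Step 2: lower bound for $M_*$.} By $M_*\ge R_*$ and the case $N=2$ of \eqref{eq:rank-family-asymptotic}, whose constants are absolute and uniform in $N$, we have
\[
 M_*(\Theta_{a})\ge R_*(\Theta_{a,2})\gtrsim \frac1{\varepsilon\log(1/\varepsilon)},\qquad \varepsilon=1-a\le\tfrac12.
\]
Here $\Theta_{a,2}=\Theta_a$. Consequently
\[
 \frac{M_*(\Theta_a)^2}{1+Q(\Theta_a)}\gtrsim\frac{\varepsilon}{\varepsilon^2\log^2(1/\varepsilon)}=\frac1{\varepsilon\log^2(1/\varepsilon)}\to\infty .
\]
This contradicts any inequality $M_*^2\le C(1+Q)$ with an absolute constant.

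\emph{Where the difficulty lies.} The only delicate input is Step 2. The lower bound of Proposition~\ref{prop:least-symbol} obtained from the least $L^2$ norm gives only $R_*(\Theta_a)^2\ge(1+a^2)/(1-a^2)$, which is comparable to $Q$ and therefore useless here. This is exactly the loss recorded by $Q\le R_*^2$. One genuinely needs the $L^1$ duality of Step~1 in the sharpness proof, \eqref{eq:rank-polynomial-duality}, which shows that the $L^\infty$ symbol cost exceeds the $L^2$ cost by a factor of order $(\varepsilon\log^2(1/\varepsilon))^{-1/2}$. Since that estimate is already established above, with $N=2$ allowed, the remaining checks are the routine algebra of Step 1 and confirming that the constants in \eqref{eq:rank-family-asymptotic} do not depend on $N$.
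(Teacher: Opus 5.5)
Your proposal is correct and follows the paper's proof: the same degree-two family $\Theta_a$, the same exact formula for the quotient defining $Q$ with $Q(\Theta_a)\sim(1-a)^{-1}$, and the same lower bound $M_*\ge R_*\sim 1/(\varepsilon\log(1/\varepsilon))$ from \eqref{eq:rank-family-asymptotic} with $N=2$. The only difference is that you use the cruder bound $|1-az^2|\le2$. The paper instead uses $|1-az^2|\le 1+|z|^2$ to get $Q(\Theta_a)=1/(1-a^2)$ exactly, and this changes nothing in the conclusion.
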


\begin{proof} The introductory family $\Theta_a=\Theta_{a,2}$, $1/2\le a<1$, provides the example. Postcomposition by a disk automorphism preserves the hyperbolic derivative, so $\eta_{\Theta_a}(z)=2|z|/(1+|z|^2)$ and
\[
 \frac{1-\eta_{\Theta_a}(z)^2}{1-|\Theta_a(z)|^2}
 =\frac{(1-|z|^2)|1-az^2|^2}
        {(1-a^2)(1+|z|^2)^3}
 \le\frac1{1-a^2}.
\]
Equality holds at zero, giving $Q(\Theta_a)=1/(1-a^2)$. Equation~\eqref{eq:rank-family-asymptotic} with $N=2$ gives
\[
 M_*(\Theta_a)\ge R_*(\Theta_a)
 \sim\frac1{(1-a)\log(1/(1-a))}\qquad(a\to1).
\]
Consequently, for an absolute $c>0$ and $a$ sufficiently close to one,
\[
 \frac{M_*(\Theta_a)^2}{1+Q(\Theta_a)}
 \ge\frac{c}{(1-a)\log^2(1/(1-a))}\to\infty.
\]
\end{proof}

\section{Model space consequences and trace class counterexamples} \label{sec:model-space-consequences}

\subsection{Embedding, factorization and compact symbols}

For $1\le p<\infty$, the Hardy space $H^p$ consists of analytic functions whose integrals $\int_\T|f(r\zeta)|^p\,dm(\zeta)$ remain bounded as $0<r<1$ varies; the supremum is $\norm f_{H^p}^p$. For an inner function $\Phi$, we write $K_\Phi^p=H^p\cap\Phi\overline{zH^p}$, interpreting the intersection through boundary values. This agrees with the model space above when $p=2$. In the notation of \cite{BBK}, $\mathcal C_p(\Phi)$ is the class of finite complex Borel measures $\mu$ on $\T$ for which integration against $|\mu|$ is bounded on $K_\Phi^p$: for some constant $C$,
\[
 \int_\T|f|^p\,d|\mu|\le C\norm f_{H^p}^p,
\]
initially for functions continuous on the closed disk. These are the embedding classes in Theorem~\ref{thm:full-characterization}(vi). Products of two functions in $K_\Theta$ belong to $H^1\cap\bar z^{\,2}\Theta^2\overline{H^1}$, where the intersection is understood through boundary values. The factorization assertion gives a converse with norm control: every function in this space is a sum of four such products, and the sum of the factors' $H^2$ norm products is bounded by a constant times its $H^1$ norm.

\cite[Theorem~2.4 and Proposition~4.1]{BBK} proved that the bounded symbol property is equivalent to the embedding and factorization conditions in Theorem~\ref{thm:full-characterization}(vi)--(vii), including the representation using four products; see also \cite[Section~5]{GarciaRossSurvey}. Next, its equivalence with the compact symbol condition in (viii) was proved by O'Loughlin \cite[Theorem~3.2]{OLoughlin}. Theorem~\ref{thm:main} identifies the one-component property as their common geometric condition.

\subsection{Prescribed singular value decay without bounded symbols}\label{sec:robust-failure}

Earlier rank-one examples establish failure of the bounded symbol property for particular inner functions \cite[Theorem~5.3]{BCFMT}; the theorem below gives counterexamples on every non-one-component model space. The proof uses a Baire category argument \cite{BanachSteinhaus}.

For compact $K$, we write $s_n(K)$ for its singular values in nonincreasing order with multiplicity, appending zeros if needed. The notation $S_1$ denotes the trace class, with $\norm K_{S_1}=\sum_n s_n(K)$.

\begin{theorem}[Prescribed singular value decay] \label{thm:arbitrary-decay} 
Assume that $\Theta$ is not one-component. For every $\delta>0$ and every sequence of positive numbers $(\varepsilon_n)_{n\ge1}$, there exists a self-adjoint trace class operator $K\in\TT(\Theta)$ such that
\[
 \norm K<\delta,\qquad
 s_n(K)\le\varepsilon_n\quad(n\ge1),\qquad
 b_\Theta(K)=+\infty.
\]
\end{theorem}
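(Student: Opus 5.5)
Since $\Theta$ is not one-component, Theorem~\ref{thm:main} gives $R_*(\Theta)=+\infty$. Hence there are points $w_k\in\D$ with $b_\Theta(R_{w_k})/\norm{R_{w_k}}\to\infty$. The operators $R_w$ are rank one but not self-adjoint, so I first symmetrize. Put $S_w=R_w+R_w^*$, which lies in $\TT(\Theta)$ because $\TT(\Theta)$ is closed under adjoints ($(A_\varphi^\Theta)^*=A_{\bar\varphi}^\Theta$). Also put $T_w=i(R_w-R_w^*)$. Both are self-adjoint of rank at most two, with $b_\Theta(S_w),b_\Theta(T_w)\le 2b_\Theta(R_w)$. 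Since $R_w=\tfrac12(S_w-iT_w)$, subadditivity gives $\max\{b_\Theta(S_w),b_\Theta(T_w)\}\ge b_\Theta(R_w)$, while both operator norms are at most $2\norm{R_w}$. So we obtain a sequence of self-adjoint operators $F_k\in\TT(\Theta)$ of rank at most two with $\norm{F_k}=1$ and $b_\Theta(F_k)\to\infty$.

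The plan is a Baire category argument in a Banach space adapted to the prescribed decay. Choose weights $\beta_k>0$ and consider the space $X$ of self-adjoint operators
\[
K=\sum_k c_kF_k,\qquad c\in\ell^1_\beta,\quad \sum_k|c_k|\beta_k<\infty,
\]
or, more cleanly, the closed real subspace $Y$ of self-adjoint trace class operators in $\TT(\Theta)$ generated by the $F_k$, normed by a weighted nuclear norm $\inf\sum|c_k|\beta_k$. We choose $\beta_k$ growing fast enough that every $K$ of norm $\le1$ satisfies $s_n(K)\le\varepsilon_n$ and $\norm K<\delta$. Since each $F_k$ has rank at most two, $s_{2k+1}(\sum_{j>k}c_jF_j+\text{rank}\le 2k)\le\sum_{j>k}|c_j|$ by Ky Fan/Weyl inequalities. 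Requiring $\beta_j\ge \max_{n\le 2j+2}\varepsilon_n^{-1}$ together with $\beta_j\ge 1/\delta$ then makes the unit ball of this space lie in the target set $\{s_n\le\varepsilon_n,\ \norm{\cdot}<\delta\}$ (with a harmless constant). The space is complete, being a quotient of a weighted $\ell^1$. It embeds continuously in $\TT(\Theta)$.

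Next suppose, for contradiction, that every element of the unit ball of this space has a bounded symbol. The sublevel sets $\{b_\Theta\le M\}$ are closed in operator norm, by the closedness discussion after \eqref{eq:cost-sublevels}, hence closed in the stronger norm of the space. Their union over $M\in\mathbb N$ covers the space. Baire gives a ball on which $b_\Theta\le M$, and homogeneity and subadditivity of $b_\Theta$ upgrade this to $b_\Theta(K)\le M'\norm K_{X}$ on all of $X$. Applied to $K=F_k$, this gives $b_\Theta(F_k)\le M'\beta_k$.

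This is where the main obstacle lies: the weights $\beta_k$ grow, so we need $b_\Theta(F_k)/\beta_k$ unbounded. The fix is to pass to a subsequence, choosing $F_k$ inductively only after $\beta_k$ is determined by $\varepsilon$ and $\delta$. Since $b_\Theta(F)\to\infty$ along the original family, we can pick $F_k$ with $b_\Theta(F_k)\ge k\beta_k$, which contradicts the bound $b_\Theta(F_k)\le M'\beta_k$. Hence some $K$ in the unit ball of $X$ has $b_\Theta(K)=+\infty$, and this $K$ is self-adjoint, trace class, with $\norm K<\delta$ and $s_n(K)\le\varepsilon_n$.
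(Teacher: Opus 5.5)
Your proof is correct, but it applies the Baire category theorem in a different space from the paper.

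\textbf{The paper's route.} The paper first fixes weights $a_j=2^{-j}\min\{\delta/2,\varepsilon_{2j}\}$. It then picks self-adjoint $T_j$ of rank at most two with $a_jb_\Theta(T_j)>j$, and applies Baire on the compact cube $[-1,1]^{\mathbb N}$ to $t\mapsto\sum_j a_jt_jT_j$. The norm and singular value bounds hold simultaneously for every parameter $t$. A sublevel set $\{b_\Theta\le m\}$ has empty interior because flipping one free coordinate $t_j$, with $j>m$, from $1$ to $-1$ would give $b_\Theta(2a_jT_j)\le 2m$.

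\textbf{Your route.} You take the range of $c\mapsto\sum_k c_kF_k$ on the weighted space $\ell^1_\beta$, with the quotient norm. This is complete because the kernel is closed. Calling it a closed subspace of the trace class is inaccurate, but your quotient description is the one that works. You arrange that its unit ball lies in the target set, using the open ball or a factor $2$ in $\beta_j\ge 2/\delta$ to get $\norm K<\delta$ strictly. Then you run the classical uniform boundedness argument: a sublevel set with interior gives $b_\Theta(K)\le M'\norm K_X$, which contradicts $b_\Theta(F_k)\ge k\beta_k$.

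\textbf{Shared ingredients.} The two proofs rest on the same facts:
\begin{itemize}
\item the sublevel sets of $b_\Theta$ are closed, and $b_\Theta$ is homogeneous and subadditive;
\item $\TT(\Theta)$ is closed in operator norm, so the trace norm convergent series stays in $\TT(\Theta)$;
\item $R_*(\Theta)=+\infty$ yields self-adjoint pieces of rank at most two with large normalized symbol cost;
\item the truncation bound $s_{2k+1}(K)\le\sum_{j>k}|c_j|$ controls the singular values;
\item the pieces are chosen only after the weights are fixed.
\end{itemize}

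\textbf{Trade-offs.} The paper's version needs only compactness of the cube and avoids infima over representations. Yours is the standard Banach-space argument. Run as nowhere density of each sublevel set, it also shows that operators without bounded symbols form a dense $G_\delta$ in a Banach space whose open unit ball lies entirely in the prescribed set.
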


The construction controls the singular values and the least bounded symbol norm separately. In a series $\sum_j a_jt_jT_j$, we first choose summable $a_j$ so that the singular value bounds hold for every $|t_j|\le1$. We then choose self-adjoint $T_j$ with rank at most two, $\norm{T_j}\le1$ and $a_jb_\Theta(T_j)>j$. Cancellation may reduce the least bounded symbol norm of the sum; Baire's theorem selects parameters for which the sum has no bounded symbol.

\begin{proof} We may replace $\varepsilon_n$ by $\min_{1\le k\le n}\varepsilon_k$ and thus assume that the sequence is nonincreasing. We put
\[
 a_j=2^{-j}\min\{\delta/2,\varepsilon_{2j}\},\qquad j\ge1.
\]
By Theorem~\ref{thm:main}, the normalized interior operators of rank one $R$ have unbounded least symbol norms $b_\Theta(R)$, although each has a bounded symbol. We write $R=\operatorname{Re}R+i\operatorname{Im}R$, where $\operatorname{Re}R=(R+R^*)/2$ and $\operatorname{Im}R=(R-R^*)/(2i)$. Both parts are self-adjoint truncated Toeplitz operators, have rank at most two and norm at most one, and have bounded symbols. Since
\[
 b_\Theta(R)\le
 b_\Theta(\operatorname{Re}R)+b_\Theta(\operatorname{Im}R),
\]
we can choose, for every $j$, a self-adjoint operator $T_j$ of rank at most two such that
\begin{equation}\label{eq:large-selfadjoint-cost}
 \norm{T_j}\le1,\qquad b_\Theta(T_j)<\infty,
 \qquad a_jb_\Theta(T_j)>j.
\end{equation}

The set $\mathcal Q=[-1,1]^{\mathbb N}$ carries its compact metrizable product topology. We define
\[
 F(t)=\sum_{j\ge1}a_jt_jT_j,\qquad t=(t_j)\in\mathcal Q.
\]
Convergence in this topology means convergence in every coordinate. A basic neighborhood restricts only finitely many coordinates, and we use one of the remaining free coordinates to test any proposed uniform symbol bound. The series converges uniformly in trace norm, because $\norm{T_j}_{S_1}\le2$ and $\sum_j a_j<\infty$. Thus $F$ is continuous into the trace class, and its values are self-adjoint compact truncated Toeplitz operators; here we use the closedness of $\TT(\Theta)$ in the operator norm.

For integers $m\ge1$, the sets
\[
 E_m=\{t\in\mathcal Q:b_\Theta(F(t))\le m\}
\]
are closed by \eqref{eq:cost-sublevels}. None has interior. Otherwise, some $E_m$ would contain a nonempty basic open set, whose restrictions involve only finitely many coordinates. We choose a free coordinate $j>m$ and two points $t^+,t^-$ in this set which agree except for $t_j^+=1$ and $t_j^-=-1$. Subadditivity gives
\[
 2a_jb_\Theta(T_j)
  =b_\Theta(F(t^+)-F(t^-))\le2m,
\]
contrary to \eqref{eq:large-selfadjoint-cost}. The Baire category theorem supplies $t\notin\bigcup_m E_m$. We put $K=F(t)$.

The norm bound follows from $\norm K\le\sum_j a_j\le\delta/2$. For the singular values, we truncate the series before its $j$th term. The resulting operator has rank at most $2j-2$, so the approximation characterization of singular values yields
\[
 \begin{split}
 s_{2j-1}(K)
 &\le\left\|\sum_{k\ge j}a_kt_kT_k\right\|
 \le\sum_{k\ge j}a_k\\
 &\le 2^{1-j}\min\{\delta/2,\varepsilon_{2j}\}
 \le\varepsilon_{2j}.
 \end{split}
\]
Hence both $s_{2j-1}(K)\le\varepsilon_{2j-1}$ and $s_{2j}(K)\le\varepsilon_{2j}$. The choice of $t$ gives $b_\Theta(K)=+\infty$. \end{proof}

\begin{remark}\label{rem:BC-connection}
The search for symbol criteria for boundedness also motivated our earlier work on Toeplitz operators on the Bargmann--Fock space $F^2(\mathbb C^n)$ \cite{LooiBC}. There we constructed for each dimension $n\ge1$ a symbol $g$ satisfying $gk_a\in L^2$ for every reproducing kernel $k_a$ such that $T_g: F^2(\mathbb C^n)\to F^2(\mathbb C^n)$ is Hilbert--Schmidt, hence compact, but the heat transform of $g$ at the critical time is unbounded. This example disproves both the Berger--Coburn boundedness conjecture \cite{BergerCoburn} and the compactness conjecture of Bauer, Coburn and Isralowitz \cite[Conjecture~2]{BCI}, which proposed (scalar) heat transform criteria for boundedness and compactness, respectively. The converse boundedness implication also fails: boundedness of the critical heat transform does not imply boundedness of $T_g$ \cite{LooiBC}.

Theorem~\ref{thm:arbitrary-decay} gives a related obstruction for compact operators on model spaces: they may have no bounded symbol representative despite arbitrarily rapid singular value decay. In the Fock example, the failed condition is boundedness of the critical heat transform $H_{1/4}g$, where $H_t$ denotes heat evolution for $t>0$.
\end{remark}

\subsection{Generic failure and compact approximation}

The same category argument describes the size of the set of operators without bounded symbols. For $1\le p<\infty$, the Schatten class $S_p$ consists of compact operators whose singular values satisfy $\sum_n s_n(A)^p<\infty$, with norm $\norm A_{S_p}=(\sum_n s_n(A)^p)^{1/p}$. We write $\mathcal K(K_\Theta)$ for the compact operators on $K_\Theta$.

\begin{samepage}
\begin{corollary}[Generic failure] \label{prop:generic-failure} Assume that $\Theta$ is not one-component. For each $1\le p<\infty$, operators without bounded symbols form a dense $G_\delta$ subset of
\[
 X_p=\TT(\Theta)\cap S_p
\]
in the Schatten $p$-norm. The same holds in $X_{\mathrm c}=\TT(\Theta)\cap\mathcal K(K_\Theta)$ with the operator norm. Both assertions also hold in the respective real spaces of self-adjoint operators. \end{corollary}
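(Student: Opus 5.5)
We prove the corollary by adapting the Baire category argument of Theorem~\ref{thm:arbitrary-decay}. We treat $X_p$ first, and then $X_{\mathrm c}$ and the self-adjoint versions in the same way. First, $X_p$ is a Banach space in the Schatten norm. If $A_n\to A$ in $S_p$, then $A_n\to A$ in operator norm, and $\TT(\Theta)$ is closed in operator norm by \cite[Theorem~4.2]{Sarason}; hence $A\in X_p$. The same reasoning shows that $X_{\mathrm c}$ is closed in $\TT(\Theta)$, and the self-adjoint parts are closed real subspaces.

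We write the set of operators without bounded symbols as $X_p\setminus\bigcup_m E_m$, where $E_m=\{A\in X_p: b_\Theta(A)\le m\}$. The $G_\delta$ property follows once each $E_m$ is closed in $X_p$. By \eqref{eq:cost-sublevels}, $E_m$ is closed in operator norm, and the Schatten norm dominates the operator norm, so $E_m$ is closed in $X_p$. Density follows from the Baire category theorem once each $E_m$ has empty interior, because a countable union of closed nowhere dense sets has dense complement in a complete metric space.

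To show that $E_m$ has empty interior, we take $A\in E_m$ and $\varepsilon>0$. As in the proof of Theorem~\ref{thm:arbitrary-decay}, Theorem~\ref{thm:main} supplies self-adjoint operators $T\in\TT(\Theta)$ of rank at most two with $\norm T\le1$ and $b_\Theta(T)$ finite but arbitrarily large. Such a $T$ has $\norm T_{S_p}\le 2^{1/p}$. We choose $T$ with $b_\Theta(T)>4m/\varepsilon$ and set $B=A+\tfrac{\varepsilon}{2}T$. Then $\norm{B-A}_{S_p}\le\varepsilon$. Subadditivity and homogeneity give
\[
b_\Theta(B)\ge \tfrac{\varepsilon}{2}b_\Theta(T)-b_\Theta(A)>2m-m=m,
\]
so $B\notin E_m$. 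Since $T$ has finite rank, $B$ is again in $X_p$, or in $X_{\mathrm c}$ for the compact case. If $A$ is self-adjoint, so is $B$, because $T$ is self-adjoint; this handles the real self-adjoint spaces. The argument for $X_{\mathrm c}$ with the operator norm is the same, using $\norm{B-A}\le\varepsilon/2$.

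The main point to check is that $E_m$ is closed in operator norm. This is exactly the weak-* compactness argument recorded after \eqref{eq:cost-sublevels}, so it is available. Nonemptiness of each space is clear, since it contains the finite rank operators $T$ above.
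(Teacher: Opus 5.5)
Your proof is correct. It shares the paper's skeleton: the spaces are Banach, the sublevel sets $\{b_\Theta\le m\}$ are closed by \eqref{eq:cost-sublevels}, each has empty interior, and Baire's theorem concludes. You prove the empty-interior step differently.

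The paper takes the single self-adjoint trace class operator $K$ with $b_\Theta(K)=+\infty$ supplied by Theorem~\ref{thm:arbitrary-decay}. Such a $K$ lies in every one of the indicated spaces. If a ball about $A_0$ were contained in a sublevel set, then $A_0$ and $A_0+tK$ would both have bounded symbols, and by linearity so would $K$.

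You instead perturb by self-adjoint operators $T$ of rank at most two with $\norm T\le1$ and finite but arbitrarily large $b_\Theta(T)$. These come directly from $R_*(\Theta)=\infty$ in Theorem~\ref{thm:main} through the real/imaginary part splitting. You then use subadditivity and homogeneity quantitatively.

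The paper's version is shorter once Theorem~\ref{thm:arbitrary-decay} is available, and it needs only one operator without a bounded symbol. Yours bypasses Theorem~\ref{thm:arbitrary-decay} and its Baire argument over $[-1,1]^{\mathbb N}$ entirely, relying only on Theorem~\ref{thm:main}. The Baire theorem then recovers, as a consequence, the existence of operators in $X_p$ without bounded symbols, though without the prescribed singular value bounds. Your argument also shows slightly more: a perturbation of rank at most two and arbitrarily small norm can push $b_\Theta$ above any prescribed level.
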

\end{samepage}

\begin{proof} Each indicated space is Banach, since $\TT(\Theta)$ is closed in the operator norm. For any one of these spaces $X$, we set $D_m=\{A\in X:b_\Theta(A)\le m\}$. These sets are closed by \eqref{eq:cost-sublevels}. Theorem~\ref{thm:arbitrary-decay} gives a self-adjoint $K\in\TT(\Theta)\cap S_1$ without a bounded symbol; this $K$ belongs to every indicated space. If some $D_m$ contained a ball about $A_0$, then $A_0$ and $A_0+tK$ would lie in it for sufficiently small nonzero real $t$. The difference of their bounded symbols, divided by $t$, would give a bounded symbol for $K$, a contradiction. Thus every $D_m$ has empty interior, and the Baire category theorem proves the assertions. \end{proof}

The approximation result by O'Loughlin \cite[Corollary~3.9]{OLoughlin} states that the compact operators with symbols in $\Theta C(\T)$ are dense in $X_{\mathrm c}$ in the operator norm for every inner function. Here $C(\T)$ denotes the continuous functions on $\T$. Together with Corollary~\ref{prop:generic-failure}, this shows that, when $\Theta$ is not one-component, compact operators admitting bounded symbols form a dense meagre $F_\sigma$ subset of $X_{\mathrm c}$.

Moreover, for any sequence $A_n\to A$ in operator norm with $b_\Theta(A_n)<\infty$ and $b_\Theta(A)=+\infty$, we have
\begin{equation}\label{eq:symbol-cost-divergence}
 b_\Theta(A_n)\to+\infty.
\end{equation}
Otherwise a subsequence would belong to one fixed closed sublevel set \eqref{eq:cost-sublevels}, forcing its limit $A$ into that set.

\section{Norms and factorization in de Branges spaces} \label{sec:spectral-applications}

Every equivalent Hilbert norm can be represented by a bounded positive invertible operator. When the original norm is an integral, we ask whether the new norm can also be represented by a bounded function multiplying the original measure. More precisely, if $\norm F_0^2=\int_{\mathbb R}|F|^2\,d\mu$, a real measurable function $w$ is a \emph{representing density} for a new norm if $\norm F_*^2=\int_{\mathbb R}w|F|^2\,d\mu$ for every $F$. We seek one function $w$ with $0<c\le w\le C<\infty$ almost everywhere that gives this identity throughout the space.

The identity $\int_\T w|f|^2\,dm=\ip{A_w^\Theta f}{f}$ for $f\in K_\Theta$, $w\in L^\infty(m)$ links this question to bounded symbols. For example, on $K_{z^2}$ take real $\varepsilon$ with $|\varepsilon|<1/2$ and $w_\varepsilon(\zeta)=1+2\varepsilon\operatorname{Re}\zeta$. Then
\[
 \begin{aligned}
 \norm{\alpha+\beta z}_\varepsilon^2
 &=\int_\T w_\varepsilon(\zeta)|\alpha+\beta\zeta|^2\,dm(\zeta)\\
 &=|\alpha|^2+|\beta|^2+2\varepsilon\operatorname{Re}(\alpha\bar\beta).
 \end{aligned}
\]
Here $w_\varepsilon\ge1-2|\varepsilon|>0$: one positive weight represents the change in the cross term for all $\alpha,\beta$. In de Branges spaces, the norm changes of interest preserve the conjugation and zero-reflection operations defining the space. We call these changes admissible. Under the model space identification, their representing operators are exactly the positive invertible truncated Toeplitz operators.

\subsection{De Branges spaces and admissible norms}\label{subsec:de-branges-norms}

The upper half plane is $$\mathbb C_+=\{z\in\mathbb C:\operatorname{Im}z>0\}.$$ The space $H^p(\mathbb C_+)$, for $1\le p<\infty$, consists of holomorphic functions $h$ on $\mathbb C_+$ with $\sup_{y>0}\int_{\mathbb R}|h(x+iy)|^p\,dx<\infty$. For an entire function $F$, we let $F^\#(z) := \overline{F(\bar z)}$. An entire function $E$ is Hermite-Biehler if $|E^\#(z)|<|E(z)|$ on $\mathbb C_+$. We assume that $E$ has no real zeros and define
\[
 \begin{gathered}
 \mathcal H(E)=\{F\text{ entire}:
          F/E,F^\#/E\in H^2(\mathbb C_+)\},\\
 \norm F_E^2=\int_{\mathbb R}|F(x)/E(x)|^2\,dx.
 \end{gathered}
\]
Here the original norm is $\norm{\cdot}_E$, with measure $\mu=|E(x)|^{-2}\,dx$. The quotient $\Theta_E=E^\#/E$ is inner in the upper half plane. We call it one-component when $\{z\in\mathbb C_+:|\Theta_E(z)|<t\}$ is connected for some $0<t<1$. This is the same notion as in the disk after a conformal change of variable.

\begin{samepage}
An equivalent Hilbert norm $\norm{\cdot}_*$ on the same entire functions is called \emph{admissible} if two operations preserve it: conjugation, so $\norm{F^\#}_*=\norm F_*$, and reflection of a nonreal zero. More precisely, if $F(a)=0$ and $a\notin\mathbb R$, the function
\[
 F_a(z)=\frac{z-\bar a}{z-a}F(z)
\]
\end{samepage}
is entire, and we require $\norm{F_a}_*=\norm F_*$. The apparent pole at $a$ is removed by the zero of $F$. On the real line the multiplier has modulus one, which explains why this operation preserves norms defined by weights on the real line. We measure the size of a norm change by

\begin{equation}\label{eq:norm-perturbation}
 d=\sup_{F\ne0}\left|\frac{\norm F_*^2}{\norm F_E^2}-1\right|.
\end{equation}

Every equivalent Hilbert norm determines a unique bounded positive invertible operator $A$ such that $\norm F_*^2=\ip{AF}{F}_E$. The original norm corresponds to $A=I$.

\subsection{Admissible norms and truncated Toeplitz operators}\label{subsec:norm-correspondence} We assume $\mathcal H(E)\ne\{0\}$. Division by $E$ gives a unitary map $U_E:\mathcal H(E)\to K_{\Theta_E}$, where
\[
 K_{\Theta_E}=H^2(\mathbb C_+)\cap\bigl(\Theta_EH^2(\mathbb C_+)\bigr)^\perp.
\]
For the disk model of $\mathcal H(E)$, we use the Cayley transform \[ \chi(z)=\frac{z-i}{z+i}, \] which maps $\mathbb C_+$ conformally onto $\mathbb D$, and put $\theta=\Theta_E\circ\chi^{-1}$. The map
\[
 (Wh)(z)=\frac{h(\chi(z))}{\sqrt\pi\,(z+i)}
\]
is unitary from $K_\theta$ onto $K_{\Theta_E}$. We define half plane truncated Toeplitz operators by unitary conjugation with $W$. For bounded symbols, change of variables gives
\begin{equation}\label{eq:spectral-covariance}
 WA_\varphi^\theta W^*=A^+_{\varphi\circ\chi},
 \qquad
 \norm{\varphi\circ\chi}_{L^\infty(\mathbb R)}
       =\norm\varphi_{L^\infty(m)},
\end{equation}
where $A_w^+$ first multiplies $f\in K_{\Theta_E}$ by $w$ and then projects $wf$ orthogonally onto $K_{\Theta_E}$.

Point evaluations remain bounded under an equivalent norm, so admissibility ensures that the de Branges axioms hold for $\norm{\cdot}_*$ \cite{deBranges1959}.

\begin{samepage}
\begin{proposition}[Correspondence between norms and operators] \label{prop:spectral-norm-correspondence} Admissible norms on $\mathcal H(E)$ correspond bijectively to positive invertible truncated Toeplitz operators on $K_{\Theta_E}$, through
\[
 \norm F_A^2=\ip{AU_EF}{U_EF}.
\]
For $\norm{\cdot}_A$, the relative change in \eqref{eq:norm-perturbation} is $d=\norm{A-I}$. \end{proposition}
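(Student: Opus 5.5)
Every equivalent Hilbert norm on $\mathcal H(E)$ has the form $\norm F_*^2=\ip{AU_EF}{U_EF}$ for a unique bounded positive invertible operator $A$ on $K_{\Theta_E}$. So the task is to show that admissibility of the norm is equivalent to $A$ belonging to the truncated Toeplitz class. The formula for $d$ is then routine: $\norm F_A^2/\norm F_E^2-1=\ip{(A-I)f}{f}/\norm f^2$ with $f=U_EF$, and for self-adjoint $A-I$ the supremum of the absolute value of this numerical range equals $\norm{A-I}$.

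The plan is to translate the two admissibility operations into operators on $K_{\Theta_E}$.

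\emph{Conjugation.} Under $U_E$, the map $F\mapsto F^\#$ becomes $f\mapsto \Theta_E\overline f$ on $\mathbb R$, since $F^\#/E=\Theta_E\,\overline{F/E}$ there. This is the half-plane model conjugation $C$. Conjugation invariance of the norm, after polarization in the real-linear sense, says $\ip{ACf}{Cf}=\ip{Af}{f}$ for all $f$. Equivalently, $CAC=A^*=A$, so $A$ is $C$-symmetric.

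\emph{Zero reflection.} Let $b_a(z)=(z-\bar a)/(z-a)$. Reflection is defined on the subspace $\{f\in K_{\Theta_E}: f(a)=0\}$, where it acts by $f\mapsto b_af$. This map is an isometry onto $\{f: f(\bar a)=0\}$ for the original norm, and also, with the roles of $a$ and $\bar a$ suitably arranged, for the half-plane orientation. The second condition therefore says that $A$ intertwines, on quadratic forms, multiplication by a unimodular function over codimension-one subspaces.

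Next, transfer to the disk via $W$ and the Cayley transform. There, reflection at $a$ becomes multiplication by a ratio of Blaschke factors. Composing reflections at a point $a$ and then at a nearby point reproduces the shift-type structure. The goal is Sarason's characterization \cite[Theorem~4.1]{Sarason}: a bounded operator $A$ lies in $\TT(\theta)$ iff $A-S_\theta AS_\theta^*$ has the form $\psi\otimes k_0+k_0\otimes\chi$. An equivalent form, which I would use, is: $\ip{Af}{f}=\ip{A(zf)}{zf}$ whenever $f,zf\in K_\theta$ (the compressed shift invariance of the quadratic form). Via the automorphism covariance of Lemma~\ref{lem:automorphisms}, this condition is also equivalent to invariance under multiplication by $\tau_w$-type factors for one, hence all, $w\in\D$.

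\emph{Necessity.} If $A=A_\varphi^+$ with $A$ positive, then a symbol representation gives $\ip{Af}{f}=\int\varphi|f|^2$ whenever $\varphi$ is bounded. Unimodular multipliers then preserve the form, and $C$-symmetry holds by \cite[Lemma~2.1]{Sarason}. For unbounded $L^2$ symbols, I would argue by density using bounded functions $f$.

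\emph{Sufficiency.} Reflection at $a$ corresponds in the disk to multiplication by $\tau_{\chi(a)}\cdot(\text{Blaschke factor at }\chi(\bar a))^{-1}$, up to the unitary weights. Choosing $a$ so that $\chi(\bar a)$ corresponds to infinity is impossible, so instead I would combine reflection with the automorphism covariance, moving $\chi(a)$ to $0$. There the condition becomes $\ip{Af}{f}=\ip{A(zg)}{zg}$ for $f=zg$, essentially shift invariance, which yields Sarason's criterion.

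The main obstacle is this last step. The reflection multiplier is a quotient of two Blaschke factors rather than a single factor $z$, so reducing it exactly to the compressed shift requires care with the weights $h_w$ and with the domains. If the direct reduction fails, I would fall back on quoting de Branges' axiomatic theorem \cite{deBranges1959}. That theorem says the new norm makes $\mathcal H(E)$ a space $\mathcal H(E_*)$. I would then identify $A$ through the reproducing kernels: the kernels of $\mathcal H(E_*)$ have the de Branges form, and $A^{-1}$ maps kernels accordingly, which reproduces Sarason's rank-two defect structure.

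The converse direction, that every positive invertible truncated Toeplitz operator gives an admissible norm, follows from the necessity computations above. Together with uniqueness of $A$, this gives the bijection.
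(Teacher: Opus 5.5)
\paragraph{Review.} Your formula $d=\norm{A-I}$ is fine, and so is your direction from positive invertible truncated Toeplitz operators to admissible norms. Your density argument for unbounded symbols can be completed, since $F(a)=0$ is a single bounded linear constraint and $|b_a|=1$ on $\mathbb R$. The paper instead quotes \cite[Theorem~2.3]{BBK}, which makes $\ip{Bh}{h}$ a function of $|h|^2$ alone.

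\paragraph{The gap.} The gap is the converse, that admissible norms come from truncated Toeplitz operators. You flag this as the main obstacle and leave it open, and your hesitation rests on a false claim. You assert that choosing $a$ with $\chi(\bar a)$ at infinity is impossible. But $\chi(-i)=\infty$ and $\chi(i)=0$, so $a=\pm i$ are legitimate nonreal reflection points, and there the multiplier becomes $\zeta^{\pm1}$.

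\paragraph{How to close it.} Take $a=-i$. Then $b_{-i}(z)=(z-i)/(z+i)=\chi(z)$. Since $W(\zeta h)=\chi\cdot Wh$ (the weight $1/(\sqrt\pi(z+i))$ commutes with the multiplier), reflection at $-i$ is exactly $h\mapsto\zeta h$ in the disk model. The vanishing hypothesis comes for free. If $h,\zeta h\in K_\theta$, then $F=U_E^{-1}Wh$ and $G=U_E^{-1}W(\zeta h)$ are entire with $(z+i)G(z)=(z-i)F(z)$. Hence $F(-i)=0$ and $G=F_{-i}$. Admissibility then gives $\ip{B\zeta h}{\zeta h}=\ip{Bh}{h}$ for $B=W^*AW$ whenever $h,\zeta h\in K_\theta$. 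This is precisely Sarason's criterion \cite[Theorem~8.1]{Sarason}, the very condition you singled out. This is the paper's proof. It uses one reflection point and needs no automorphisms, no weights $h_w$, and not even conjugation invariance, since $C$-symmetry follows from membership in $\TT(\theta)$.

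\paragraph{Your automorphism route.} Your worry that the reflection multiplier is a quotient of two Blaschke factors is misplaced: $b_a\circ\chi^{-1}$ is a single unimodular M\"obius factor. For $a$ in the lower half plane it equals $c\,\tau_\beta$ with $\beta=\chi(\bar a)\in\D$ and $|c|=1$. Since $\tau_\beta$ is an involution, $U_\beta(\tau_\beta h)=\zeta\,U_\beta h$. So your route can be completed: conjugate by $U_\beta$, apply Sarason's criterion on $K_{\theta\circ\tau_\beta}$, and return via \eqref{eq:covariance}. As written, however, this step is only gestured at. The condition you display, ``$\ip{Af}{f}=\ip{A(zg)}{zg}$ for $f=zg$'', is a tautology, and the fallback via de Branges' theorem and reproducing kernels is not an argument. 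So the proposal does not yet establish the key implication.
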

\end{samepage}

Under the Cayley transform, reflection of the zero at $-i$ across the real axis, given by multiplication by $(z-i)/(z+i)$, corresponds in the disk model to multiplication by the coordinate function $\zeta$, that is, $h(\zeta)\mapsto \zeta h(\zeta)$. This turns admissibility into Sarason's criterion for truncated Toeplitz operators. Conversely, the quadratic forms of truncated Toeplitz operators depend only on $|F/E|^2$ on $\mathbb R$, which both norm symmetries preserve.

\begin{proof} Let $T$ be the operator representing $\norm{\cdot}_*$, and let $A=U_ETU_E^{-1}$ be its unitary conjugate on $K_{\Theta_E}$. Then $\norm{\cdot}_*=\norm{\cdot}_A$. The quadratic-form formula for the norm of $A-I$ gives $d=\norm{A-I}$.

The functional representation in \cite[Theorem~2.3]{BBK} says that the form $\ip{Bh}{h}$ of a disk truncated Toeplitz operator is the value of a well-defined functional at the boundary function $|h|^2$. The same holds after unitary conjugation by $W$. Conjugation and reflection of nonreal zeros preserve $|F(x)/E(x)|$ on $\mathbb R$, and hence preserve $\norm{\cdot}_A$ when $A$ is a truncated Toeplitz operator.

Conversely, suppose that $\norm{\cdot}_A$ is admissible and put $B=W^*AW$. If $h,\zeta h\in K_\theta$, the entire functions $F=U_E^{-1}Wh$ and $G=U_E^{-1}W(\zeta h)$ satisfy $G=\chi F$. Since $G$ is entire, $F(-i)=0$. Invariance of the norm under reflection of the zero at $-i$ gives
\[
 \ip{B(\zeta h)}{\zeta h}=\ip{Bh}{h}.
\]
Sarason's shift invariance criterion \cite[Theorem~8.1]{Sarason} now gives $B\in\TT(\theta)$. \end{proof}

\subsection{Representing densities}

To obtain a positive density, we represent the small difference $A-I$ by a real symbol $b$, so that $w=1+b>0$ once $\norm b_\infty<1$.

\begin{theorem}[Bounded densities for de Branges norms] \label{thm:spectral-dichotomy} 
Assume $\mathcal H(E)\ne\{0\}$. The function $\Theta_E$ is one-component if and only if every sufficiently small admissible perturbation of $\norm{\cdot}_E$ has a bounded real representing density. In that case, put
\[
 M_E=M_*(\theta),\qquad R_E=R_*(\theta),
 \qquad \theta=\Theta_E\circ\chi^{-1}.
\]
Then $1\le M_E\le C R_E\log(e+R_E)$, with an absolute constant $C$, and every admissible norm $\norm{\cdot}_*$ with relative change $d<M_E^{-1}$ admits a real density $w$ satisfying
\begin{equation}\label{eq:spectral-realization}
 \norm F_*^2=\int_{\mathbb R}w(x)|F(x)/E(x)|^2\,dx,
 \qquad \norm{w-1}_\infty\le M_Ed.
\end{equation}
In particular, $w\ge1-M_Ed>0$ almost everywhere.

If $\Theta_E$ is not one-component, then for every $0<\delta<1$ and every positive sequence $(\varepsilon_n)$ there is an admissible equivalent norm represented by $I+K$, where $K$ is self-adjoint and trace class,
\[
 \norm K<\delta,\qquad s_n(K)\le\varepsilon_n\quad(n\ge1),
\]
and the norm has no bounded representing density. \end{theorem}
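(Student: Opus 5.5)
The plan is to transport everything to the disk model through the correspondence of Proposition~\ref{prop:spectral-norm-correspondence} and the covariance \eqref{eq:spectral-covariance}. Given an admissible norm $\norm{\cdot}_*$, Proposition~\ref{prop:spectral-norm-correspondence} provides a positive invertible $A\in\TT$ on $K_{\Theta_E}$ with $\norm F_*^2=\ip{AU_EF}{U_EF}$ and $d=\norm{A-I}$. Put $B=W^*(A-I)W\in\TT(\theta)$; this is self-adjoint with $\norm B=d$. Since $\chi$ maps $\mathbb C_+$ onto $\D$ and level sets correspond, $\Theta_E$ is one-component exactly when $\theta$ is.

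\emph{One-component case.} By Theorem~\ref{thm:main} and the definition of $M_*$, and since minimizers are attained (weak-* compactness, as noted after \eqref{eq:cost-sublevels}), $B$ has a symbol $\psi\in L^\infty(m)$ with $\norm\psi_\infty\le M_E d$. Because $B=B^*$ and $A^\Theta_{\bar\psi}=(A^\Theta_\psi)^*$, the real part $\operatorname{Re}\psi$ is also a symbol, and its norm is no larger. Set $b=(\operatorname{Re}\psi)\circ\chi$ on $\mathbb R$. By \eqref{eq:spectral-covariance}, $A-I=A^+_b$ with $\norm b_\infty\le M_Ed$. For $f=U_EF$ we then get
\[
\ip{(A-I)f}{f}=\int_{\mathbb R}b|f|^2\,dx=\int_{\mathbb R}b\,|F/E|^2\,dx,
\]
so $w=1+b$ satisfies \eqref{eq:spectral-realization}, and $w\ge1-M_Ed>0$ when $d<M_E^{-1}$. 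The bound $1\le M_E\le CR_E\log(e+R_E)$ is Theorem~\ref{thm:sharp-rank-one} applied to $\theta$, since $R_E<\infty$ by Theorem~\ref{thm:main}. This proves the forward implication.

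\emph{Non-one-component case.} Apply Theorem~\ref{thm:arbitrary-decay} to $\theta$ with $\delta$ and $(\varepsilon_n)$. This gives a self-adjoint trace class $K_0\in\TT(\theta)$ with $\norm{K_0}<\delta<1$, $s_n(K_0)\le\varepsilon_n$ and $b_\theta(K_0)=\infty$. Set $K=WK_0W^*$, a unitary conjugate, so the norm and singular values are unchanged. Then $I+K$ is positive and invertible because $\norm K<1$, and it is truncated Toeplitz, so by Proposition~\ref{prop:spectral-norm-correspondence} it defines an admissible norm.

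Suppose this norm had a bounded real representing density $w$. Then $\int(w-1)|f|^2\,dx=\ip{Kf}{f}$ for all $f\in K_{\Theta_E}$. The main point to verify is that equality of quadratic forms implies $A^+_{w-1}=K$. This follows by polarization, since both sides are sesquilinear forms on a complex Hilbert space and the quadratic forms determine them. Pulling back through \eqref{eq:spectral-covariance} then gives $(w-1)\circ\chi^{-1}$ as a bounded symbol for $K_0$, a contradiction.

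The same contradiction shows that, for every $\delta$, there are admissible norms with $d<\delta$ lacking bounded densities. Hence the converse implication holds: if every sufficiently small admissible perturbation has a bounded density, then $\Theta_E$ must be one-component.

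The main obstacle is checking that \eqref{eq:spectral-covariance} holds for the half-plane operators with $L^\infty$ symbols, including the identification $U_E$ with $|f|^2=|F/E|^2$ on $\mathbb R$. This is a direct change-of-variables check using $|\chi'(x)|=2/|x+i|^2$ against the factor $1/(\pi|x+i|^2)$ from $W$, which matches the normalized measure $m$.
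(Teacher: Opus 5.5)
Your proposal is correct and takes essentially the same route as the paper. In the one-component case you transfer to the disk via Proposition~\ref{prop:spectral-norm-correspondence} and \eqref{eq:spectral-covariance}, then take an attained least-norm symbol and replace it by its real part; in the other case you conjugate the operator from Theorem~\ref{thm:arbitrary-decay}. You also spell out two steps the paper leaves implicit: the polarization argument identifying $K$ with $A^+_{w-1}$, and the fact that sublevel sets correspond under the Cayley transform.
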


\begin{proof} We first assume that $\Theta_E$ is one-component. By Proposition~\ref{prop:spectral-norm-correspondence}, we write $\norm{\cdot}_*=\norm{\cdot}_A$. The definition of $M_E$ and \eqref{eq:spectral-covariance} give a bounded symbol $b$ of $A-I$ with
\[
 \norm b_\infty\le M_E\norm{A-I}.
\]
As shown in Section~\ref{sec:symbol}, the least bounded symbol norm is attained. Since $A-I$ is self-adjoint, replacing $b$ by its real part preserves the represented operator. The weight $w=1+b$ gives \eqref{eq:spectral-realization}. The bound for $M_E$ follows from Theorem~\ref{thm:sharp-rank-one}.

For the negative assertion, we conjugate the operator supplied by Theorem~\ref{thm:arbitrary-decay} using the unitary maps in Proposition~\ref{prop:spectral-norm-correspondence} to obtain an operator on $\mathcal H(E)$. The resulting $K$ has the same norm and singular values, and $\norm K<\delta<1$ makes $I+K$ positive and invertible, giving an admissible equivalent norm. A bounded representing density $w$ would give the bounded symbol $(w-1)\circ\chi^{-1}$ for the corresponding disk operator by \eqref{eq:spectral-covariance}, a contradiction. \end{proof}

\begin{remark}[Generic absence of bounded representing densities]
If $\mathcal H(E)\ne\{0\}$ and $\Theta_E$ is not one-component, then admissible norms without bounded representing densities are generic among small norm changes in the following sense.

For $\theta=\Theta_E\circ\chi^{-1}$, let $X$ be one of the spaces $X_p$ ($1\le p<\infty$) or $X_{\mathrm c}$ in Corollary~\ref{prop:generic-failure}, and let $X_{\mathrm{sa}}$ be its real subspace of self-adjoint operators. For every $0<r<1$, the operators in
$$
I+\{K\in X_{\mathrm{sa}}:\norm K_X<r\}
$$
whose associated norms have no bounded representing density form a dense $G_\delta$ subset of this ball, in the topology induced by $\norm{\cdot}_X$. This follows from Corollary~\ref{prop:generic-failure}, since a bounded representing density for the norm associated with $I+K$ exists exactly when $K$ has a bounded symbol.
\end{remark}

For related work on de Branges spaces equal as sets with different norms, see \cite[Section~7.5]{PoltoratskiOrder}.

\subsection{Discrete sampling norms}\label{subsec:discrete-sampling}

The square of a sampling norm is a weighted sum of squared point values. If the sampling norm is equivalent to the original norm, de Branges' theorem \cite{deBranges1959} already gives a positive integral representation; the question here is whether its density relative to the original weight can be bounded above and away from zero.

A weighted sum of squared samples at real points gives an admissible norm whenever it is equivalent to the original norm. Conversely, every admissible norm has a discrete sampling representation, by de Branges' theorem on orthogonal bases of reproducing kernels. This yields a sampling version of Theorem~\ref{thm:spectral-dichotomy}.

\begin{samepage}
\begin{corollary}[Bounded densities for sampling norms]\label{cor:discrete-spectral-weights} Assume $\mathcal H(E)\ne\{0\}$. The function $\Theta_E$ is one-component if and only if there exists $0<\delta_0<1$ with the following property: whenever a discrete set of real points $(\lambda_n)$ and positive weights $(\gamma_n)$ satisfy
\begin{equation}\label{eq:sampling-perturbation}
 (1-\delta_0)\norm F_E^2
 \le\sum_n\gamma_n|F(\lambda_n)|^2
 \le(1+\delta_0)\norm F_E^2,
\end{equation}
for every $F\in\mathcal H(E)$, there is a real measurable function $w$ with $0<c\le w\le C<\infty$ almost everywhere such that
\begin{equation}\label{eq:discrete-weight-realization}
 \sum_n\gamma_n F(\lambda_n)\overline{G(\lambda_n)}
 =\int_{\mathbb R}w(x)
       \frac{F(x)\overline{G(x)}}{|E(x)|^2}\,dx
\end{equation}
for all $F,G\in\mathcal H(E)$. The sampling set may be finite or countably infinite, with no finite accumulation point.

If $\Theta_E$ is one-component and the sampling bounds are $1\pm\delta$, where $0<\delta<M_E^{-1}$, then $w$ can be chosen with $\norm{w-1}_\infty\le M_E\delta$. In particular, $w\ge1-M_E\delta>0$ almost everywhere.

If $\Theta_E$ is not one-component, then for every $0<\delta<1$ and every positive sequence $(\varepsilon_n)$ there is a sampling norm with bounds $1\pm\delta$ for which \eqref{eq:discrete-weight-realization} has no bounded real solution $w$. The associated positive operator $S$, defined by
\[
 \ip{SF}{G}_E=\sum_n\gamma_n F(\lambda_n)\overline{G(\lambda_n)},
\]
can be chosen so that $S-I$ is self-adjoint and trace class, with
\[
 \norm{S-I}<\delta,
 \qquad s_n(S-I)\le\varepsilon_n\quad(n\ge1).
\]
\end{corollary}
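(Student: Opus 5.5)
The plan is to derive the corollary from Theorem~\ref{thm:spectral-dichotomy}, using Proposition~\ref{prop:spectral-norm-correspondence} to translate between sampling norms and truncated Toeplitz operators. The link is that a sampling norm satisfying \eqref{eq:sampling-perturbation} is an equivalent Hilbert norm on $\mathcal H(E)$. Its operator $S$ satisfies $\norm{S-I}\le\delta_0$, because the quadratic form of $S-I$ lies in $[-\delta_0,\delta_0]\norm F_E^2$. The norm is also admissible. First, $|F^\#(\lambda_n)|=|F(\lambda_n)|$ at real points. Second, for nonreal $a$ the multiplier $(z-\bar a)/(z-a)$ is unimodular on $\mathbb R$, so $|F_a(\lambda_n)|=|F(\lambda_n)|$.

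\emph{One-component case.} I take $\delta_0<M_E^{-1}$. Then Theorem~\ref{thm:spectral-dichotomy} applies with $d=\norm{S-I}\le\delta$. It gives a real $w$ with $\norm{w-1}_\infty\le M_E\delta$, which in turn gives $0<1-M_E\delta\le w\le 1+M_E\delta$. This $w$ represents the squared norm $\sum_n\gamma_n|F(\lambda_n)|^2=\int w|F/E|^2\,dx$. Polarization turns this into the sesquilinear identity \eqref{eq:discrete-weight-realization}; since $w$ is real, both sides are Hermitian forms.

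\emph{Non-one-component case.} I start from the admissible norm $\norm{\cdot}_*$ with operator $I+K$ supplied by the negative part of Theorem~\ref{thm:spectral-dichotomy}. The needed properties are $\norm K<\delta$, $s_n(K)\le\varepsilon_n$, and no bounded representing density. Since admissibility keeps $(\mathcal H(E),\norm{\cdot}_*)$ a de Branges space, de Branges' theorem on orthogonal bases of reproducing kernels \cite{deBranges1959} applies. For all but at most one $\alpha$, the real zeros $(\lambda_n)$ of $e^{i\alpha}E_*-e^{-i\alpha}E_*^\#$ form a discrete set such that the normalized kernels $K^*_{\lambda_n}/\sqrt{K^*_{\lambda_n}(\lambda_n)}$ are an orthonormal basis. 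Here $E_*$ is a de Branges function for the new norm. Parseval then gives
\[
 \norm F_*^2=\sum_n\gamma_n|F(\lambda_n)|^2,\qquad \gamma_n=1/K^*_{\lambda_n}(\lambda_n)>0.
\]
This sampling norm has operator $S=I+K$, so its bounds are $1\pm\norm K\subset 1\pm\delta$. A bounded real solution of \eqref{eq:discrete-weight-realization} would be a bounded representing density for $\norm{\cdot}_*$, which contradicts the choice of $K$.

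\emph{Necessity.} Necessity is the contrapositive of the previous paragraph. I apply it with $\delta=\delta_0$ and any positive sequence $(\varepsilon_n)$ to produce a counterexample satisfying \eqref{eq:sampling-perturbation}.

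\emph{Main obstacle.} The hardest step is invoking de Branges' theorem correctly. I must verify that the equivalent admissible norm yields a de Branges space in the axiomatic sense: it has bounded point evaluations and the two norm symmetries. This is the remark after \eqref{eq:spectral-covariance}, which appeals to \cite{deBranges1959}. I must also check that the kernel basis exists for some $\alpha$. The exceptional $\alpha$, for which the associated function lies in the space, is at most one value, so a suitable $\alpha$ is always available. The points $\lambda_n$ are discrete, being zeros of a nonzero entire function. If the space is finite-dimensional, the sampling set is finite.
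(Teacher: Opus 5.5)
Your proposal is correct and follows the paper's proof essentially step for step. In the one-component case you use admissibility of the sampling norm, then Theorem~\ref{thm:spectral-dichotomy} with $d\le\delta<M_E^{-1}$, then polarization. Otherwise you take the operator $I+K$ from Theorem~\ref{thm:spectral-dichotomy}, turn it into a sampling norm through de Branges' orthogonal bases of reproducing kernels and Parseval, and take $\delta=\delta_0$ to obtain necessity. Your remarks on the choice of $\alpha$ and on discreteness of the $\lambda_n$ only make explicit what the paper cites from \cite{MNO}.
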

\end{samepage}

The identity \eqref{eq:discrete-weight-realization} says that the discrete measure and $w(x)|E(x)|^{-2}\,dx$ give the same inner product on $\mathcal H(E)$, although the measures themselves need not be equal.

\begin{proof} The sampling inequalities make
\[
 \norm F_*^2=\sum_n\gamma_n|F(\lambda_n)|^2
\]
an equivalent Hilbert norm. Conjugation and reflection of a nonreal zero preserve $|F(\lambda_n)|$ at every real sampling point, so this norm is admissible. If its sampling bounds are $1\pm\delta$, its relative change in \eqref{eq:norm-perturbation} is at most $\delta$. When $\Theta_E$ is one-component, Theorem~\ref{thm:spectral-dichotomy} gives the asserted density and bound. Polarization gives \eqref{eq:discrete-weight-realization}. Any choice of $0<\delta_0<M_E^{-1}$ proves the forward implication.

For the converse and the final assertion, suppose that $\Theta_E$ is not one-component. Given $\delta$ and $(\varepsilon_n)$, Theorem~\ref{thm:spectral-dichotomy} provides an admissible norm represented by $I+K$, where $\norm K<\delta$, $K$ is self-adjoint and trace class, $s_n(K)\le\varepsilon_n$, and the norm has no bounded representing density. The same space of entire functions, equipped with this norm, is a de Branges space. We choose a discrete set of real points $(\lambda_n)$ for which its normalized reproducing kernels form an orthonormal basis, as supplied by de Branges' theorem; see \cite[Section~1.1]{MNO}. If $k_\lambda^*$ denotes the reproducing kernel in this new norm, Parseval's identity gives
\[
 \norm F_*^2
 =\sum_n\frac{|F(\lambda_n)|^2}{\norm{k_{\lambda_n}^*}_*^2}.
\]
Thus $\gamma_n=\norm{k_{\lambda_n}^*}_*^{-2}$ gives the required sampling norm, with $S=I+K$ and bounds $1\pm\delta$. A bounded real function satisfying \eqref{eq:discrete-weight-realization} would represent $\norm{\cdot}_*$, contradicting its construction. \end{proof}

\begin{remark}[Perturbing sampling weights] We assume that $\Theta_E$ is one-component and that
\[
 V:\mathcal H(E)\to\ell^2(J),\qquad
 VF=(\sqrt{\gamma_n}F(\lambda_n))_{n\in J},
 \quad \gamma_n>0,\quad\lambda_n\in\mathbb R,
\]
is unitary, where $J$ is finite or countably infinite. For a real sequence $(a_n)$ with $\delta=\sup_n|a_n|<M_E^{-1}$, the perturbed sampling norm
\[
 \norm F_*^2=\sum_{n\in J}(1+a_n)\gamma_n|F(\lambda_n)|^2
\]
is represented by the operator $V^*\operatorname{diag}(1+a_n)V$, so the relative change in the squared norm, as defined in \eqref{eq:norm-perturbation}, takes the value $d=\delta$. Corollary~\ref{cor:discrete-spectral-weights} gives a representing density $w_a$ with $\norm{w_a-1}_\infty\le M_E\delta$. \end{remark}

\subsection{Weak factorization of entire functions}
We define
\[
 \begin{gathered}
 \mathcal H^1(E^2)=\{F\text{ entire}:
       F/E^2,F^\#/E^2\in H^1(\mathbb C_+)\},\\
 \norm F_{1,E^2}=
       \int_{\mathbb R}\frac{|F(x)|}{|E(x)|^2}\,dx.
 \end{gathered}
\]
H\"older's inequality gives $GH\in\mathcal H^1(E^2)$ for $G,H\in\mathcal H(E)$, with $\norm{GH}_{1,E^2}\le\norm G_E\norm H_E$. For the converse factorization, we apply Theorem~\ref{thm:full-characterization}(vii).

\begin{samepage}
\begin{corollary}[Weak factorization of entire functions] \label{cor:entire-weak-factorization} The function $\Theta_E$ is one-component if and only if there is $C_E>0$ such that every $F\in\mathcal H^1(E^2)$ has a decomposition
\begin{equation}\label{eq:entire-weak-factorization}
 \begin{gathered}
 F=\sum_{j=1}^4G_jH_j,\qquad G_j,H_j\in\mathcal H(E),\\
 \sum_{j=1}^4\norm{G_j}_E\norm{H_j}_E
       \le C_E\norm F_{1,E^2}.
 \end{gathered}
\end{equation}
\end{corollary}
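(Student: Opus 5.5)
The plan is to transfer Theorem~\ref{thm:full-characterization}(i)$\Leftrightarrow$(vii) from the disk model $K_\theta$, with $\theta=\Theta_E\circ\chi^{-1}$, to $\mathcal H(E)$. First we observe that $\Theta_E$ is one-component exactly when $\theta$ is, since $\chi$ is a conformal bijection and sublevel sets correspond. It then suffices to build a norm-controlled dictionary between the two factorization problems.

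\textbf{Dictionary.} The unitary $U_E^{-1}W:K_\theta\to\mathcal H(E)$ sends $h$ to $G=E\,(h\circ\chi)/(\sqrt\pi(z+i))$. For $g,h\in K_\theta$, the product of the images is
\[
E^2\,(gh)\circ\chi/(\pi(z+i)^2).
\]
So we introduce the map
\[
V:\Phi\mapsto E^2(\Phi\circ\chi)/(\pi(z+i)^2)
\]
on $X_\theta:=H^1\cap\bar z^{\,2}\theta^2\overline{H^1}$. We aim to show that $V$ is a bijection $X_\theta\to\mathcal H^1(E^2)$ satisfying
\[
\norm{V\Phi}_{1,E^2}=\norm\Phi_1.
\]

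\textbf{Norm identity.} The isometry follows from the change of variables $x=\chi^{-1}(\zeta)$, using $dm(\zeta)=dx/(\pi(1+x^2))$ and $|x+i|^2=1+x^2$.

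\textbf{Analytic conditions.} For $\Phi\in H^1(\D)$, the function $(\Phi\circ\chi)/(z+i)^2$ lies in $H^1(\mathbb C_+)$; this is the standard $H^p$ Cayley correspondence with weight $(z+i)^{-2/p}$. Hence $V\Phi/E^2\in H^1(\mathbb C_+)$. For the reflected condition, we compute
\[
(V\Phi)^\#/E^2=\Theta_E^2\,\overline{(\Phi\circ\chi)}\,/\,(\pi(x-i)^2)\quad\text{on }\mathbb R.
\]
Substituting $\Phi=\bar\zeta^{\,2}\theta^2\bar\Psi$ with $\Psi\in H^1$ and using $\bar\chi^{\,2}/(x-i)^2=1/(x+i)^2$, this becomes $(\Psi\circ\chi)/(\pi(x+i)^2)$. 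That is the boundary function of an $H^1(\mathbb C_+)$ function, and the computation reverses.

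\textbf{Entireness.} Here we need that $V\Phi$ is entire rather than merely defined on $\mathbb C_+$. This is where I expect the main obstacle. I would handle it by density plus closedness. By Theorem~\ref{thm:full-characterization}(vii) in the CLS case, $V$ of a finite product sum is a sum of products in $\mathcal H(E)$, hence entire. In general, I would use the BBK characterization that $X_\theta$ is the closed span of products of model space functions, or check directly that $\mathcal H^1(E^2)$ is the Krein-type space whose elements satisfy $F/E^2$ and $F^\#/E^2\in H^1$. The two half-plane expressions $V\Phi/E^2$ on $\mathbb C_+$ and $\overline{(V\Phi)^\#/E^2}$ reflected to $\mathbb C_-$ glue across $\mathbb R$, by equality of boundary values, to give a function that is analytic off the zeros of $E^2$. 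Multiplying by $E^2$ then yields an entire function, via the standard Krein theorem argument in de Branges' theory. Conversely, $V^{-1}$ maps $\mathcal H^1(E^2)$ into $X_\theta$ by the reversed computation.

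\textbf{Conclusion.} With the isometric bijection in hand, (vii) for $\theta$ transfers verbatim. A decomposition $\Phi=\sum f_jg_j$ in $K_\theta$ maps to $F=V\Phi=\sum G_jH_j$ with $\norm{G_j}_E=\norm{f_j}_2$ and $\norm{H_j}_E=\norm{g_j}_2$, and the same holds in reverse. Therefore \eqref{eq:entire-weak-factorization} holds with $C_E=C_\theta$ if and only if $\theta$, equivalently $\Theta_E$, is one-component.
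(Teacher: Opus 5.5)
Your proposal follows the paper's proof: you identify $\mathcal H^1(E^2)$ isometrically, and compatibly with products, with $H^1\cap\bar\zeta^{\,2}\theta^2\overline{H^1}$ via division by $E^2$ composed with the Cayley isometry $W_1$, obtain entireness of the inverse image by continuing $E^2W_1\Phi$ across $\mathbb R$ through the reflected Hardy function, and transfer (i)$\Leftrightarrow$(vii) of Theorem~\ref{thm:full-characterization}. Only the direct gluing is needed: on $\mathbb C_-$ use $(E^\#)^2k^\#$ with $k=W_1\Psi$. Drop the density route, since invoking (vii) is unavailable in the converse direction, which is the only place entireness of $V\Phi$ is used. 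Also, the identity in your reflected computation should read $\chi^2/(x-i)^2=1/(x+i)^2$.
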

\end{samepage}

\begin{proof} Division by $E$ and $E^2$ identifies the factors and the target isometrically with $K_{\Theta_E}$ and
\[
 H^1(\mathbb C_+)\cap
       \Theta_E^2\overline{H^1(\mathbb C_+)},
\]
respectively, and preserves products. The intersection is understood through boundary values. For the inverse identification, the reflected Hardy function continues $E^2g$ across $\mathbb R$.

With $\chi,\theta,W$ fixed in Subsection~\ref{subsec:norm-correspondence}, the $H^1$ isometry
\[
 (W_1f)(z)=\frac{f(\chi(z))}{\pi(z+i)^2}
\]
satisfies $W_1(fg)=(Wf)(Wg)$. It carries the disk product space $H^1(\D)\cap\bar\zeta^{\,2}\theta^2\overline{H^1(\D)}$ onto the preceding half plane space. The equivalence of (i) and (vii) in Theorem~\ref{thm:full-characterization} proves the assertion. 
\end{proof}

For example, suppose that $\Theta_E$ is one-component. If a linear functional $\Lambda$ on $\mathcal H^1(E^2)$ satisfies $|\Lambda(GH)|\le B\norm G_E\norm H_E$ for all $G,H\in\mathcal H(E)$, then the corollary gives $|\Lambda(F)|\le BC_E\norm{F}_{1,E^2}$ for every $F\in\mathcal H^1(E^2)$. Thus a bound on products extends to $\mathcal H^1(E^2)$.

\begin{remark}[Phase conditions and sampling]\label{cor:phase} Let $\phi$ be the increasing phase of $E$, so $E(x)=|E(x)|e^{-i\phi(x)}$, and put $d\mu_\phi=\phi'(x)\,dx$. The measure $\mu_\phi$ is locally doubling if $\mu_\phi(2I)\le D\mu_\phi(I)$ whenever $\mu_\phi(2I)<1$, for some $D>0$; here $2I$ has the same centre as $I$ and twice its length. If $\Theta_E$ is a meromorphic Blaschke product with infinitely many zeros, Marzo, Nitzan and Olsen \cite[Section~2.1]{MNO} show that the following are equivalent: $\Theta_E$ is one-component; $\mu_\phi$ is locally doubling; and $|\phi''(x)|\le C\phi'(x)^2$ on $\mathbb R$ for some $C>0$. Thus each condition is equivalent to the bounded symbol property, the integral representation of the norm in Theorem~\ref{thm:spectral-dichotomy}, the sampling characterization in Corollary~\ref{cor:discrete-spectral-weights}, and the weak factorization in Corollary~\ref{cor:entire-weak-factorization}.

The sampling and interpolation theorems in \cite{MNO} assume the doubling inequality for all intervals, and they give density conditions on sampling and interpolation sequences. Corollary~\ref{cor:discrete-spectral-weights} concerns the integral representation of a sampling norm whose bounds are already close to one. Under the hypotheses above, local doubling is equivalent to the existence of a positive bounded representing density, bounded away from zero, for every sufficiently small sampling norm change.

The assumption of infinitely many zeros cannot be omitted from this equivalence. For example, take $E(z)=z+i$; then $\Theta_E(z)=(z-i)/(z+i)$ maps the upper half plane conformally onto $\D$, so its sublevel sets are connected. Its phase satisfies
\[
 \phi'(x)=\frac{1}{1+x^2},
 \qquad
 \frac{|\phi''(x)|}{\phi'(x)^2}=2|x|.
\]
This ratio is unbounded, so no constant $C$ can satisfy $|\phi''(x)|\le C\phi'(x)^2$ on $\mathbb R$. Thus the one-component property alone does not imply the phase estimate.
\end{remark}

\appendix \section{Quantitative symbol bounds from annular coverings}\label{app:annular-symbol}

Following Volberg \cite[Section~1]{Volberg} and Baranov, Bessonov and Kapustin \cite[Section~7]{BBK}, we construct positive majorants to prove Lemma~\ref{lem:annular-symbol-bound}, keeping track of the dependence on the annulus width $\ell$.

We use the space
\[
 X_\theta=\left\{
 f=\sum_n f_n\overline{g_n}:
 f_n,g_n\in K_\theta,\quad
 \sum_n\norm{f_n}_2\norm{g_n}_2<\infty\right\},
\]
with norm equal to the infimum of these sums. By the duality $X_\theta^*=\TT(\theta)$ from \cite[Theorem~2.3]{BBK} and Hahn--Banach, it suffices to prove
\begin{equation}\label{eq:annular-product-bound}
 \norm f_{X_\theta}\le\frac C\ell\norm f_1
 \qquad(f\in X_\theta).
\end{equation}
Indeed, this estimate extends the functional associated with $A\in\TT(\theta)$ to $L^1(m)$ with norm at most $C\ell^{-1}\norm A$, giving a bounded symbol with the same bound.

Positivity makes the estimate accessible: for nonnegative functions in $X_\theta$, the $X_\theta$ and $L^1$ norms coincide \cite[Proposition~4.1]{BBK}. For real-valued $f$, a majorant $G\in X_\theta$ with $G\ge|f|$ therefore gives $\norm f_{X_\theta}\le\norm G_1$ by applying this equality to $G+f$ and $G-f$. The covering supplies boundary intervals on which squared reproducing kernels yield such a majorant. Its integral is at most $C\ell^{-1}\norm f_1$, which is the required dependence on the annulus width.

\begin{proof}[Proof of Lemma~\ref{lem:annular-symbol-bound}] All constants below are absolute. By Lemma~\ref{lem:automorphisms}, we may precompose with a disk automorphism and assume $|\theta(0)|\le1/4$. Such a point exists because a nonconstant inner function has infimum modulus zero: otherwise $\log|\theta|$ would be bounded harmonic with boundary values zero. This normalization preserves the covering hypothesis and $M_*(\theta)$.

\smallskip\noindent\emph{Step 1: boundary continuation.} For $\alpha\in\T$, the covering hypothesis lifts $\alpha e^\zeta$ from the half-disk $|\zeta|<\ell$, $\Re\zeta<0$, to a map $\phi$ satisfying $\theta(\phi(\zeta))=\alpha e^\zeta$. The lift is injective, and it avoids zero because $|\theta(0)|<e^{-\ell}$. On the imaginary diameter, $|\phi|\to1$, since an interior cluster point would be mapped to $\T$. Reflection of $\log|\phi|$ therefore extends $\phi$ by
\[
 \phi(\zeta)=\frac1{\overline{\phi(-\bar\zeta)}}
 \qquad(\Re\zeta>0).
\]
The derivative cannot vanish on the diameter. At such a point, the first nonconstant Taylor term of a local logarithm of $\phi$ would have degree at least two, forcing $\log|\phi|$ to take both signs in the left half-disk, which contradicts $|\phi|<1$ there. If two points of the diameter had the same image, local inversion would give overlapping images of their left half-neighborhoods, contradicting injectivity on the left. Since the two half-disks map to opposite sides of $\T$, the extension is injective on $|\zeta|<\ell$.

A boundary point is called regular if $\theta$ extends analytically across it. Almost every $\xi\in\T$ is regular: if $\theta(r\xi)\to\alpha\in\T$, lift the logarithmic half-disk through a sufficiently late point of the radial tail. Uniqueness of lifting identifies the lift with that tail, and its extension through zero has value $\xi$ and nonzero derivative. Local inversion then continues $\theta$ across $\xi$.

At a regular point $t$, we put $v(t)=|\theta'(t)|$. We first obtain a uniform lower bound from Schwarz--Pick. With $a=|\theta(0)|\le1/4$, it gives $|\theta(rt)|\le(a+r)/(1+ar)$. Since $\theta$ is analytic and unimodular across the regular arc, letting $r\to1$ yields
\[
 v(t)=\lim_{r\to1}\frac{1-|\theta(rt)|^2}{1-r^2}
 \ge\lim_{r\to1}\frac{1-a^2}{(1+ar)^2}
 =\frac{1-a}{1+a}\ge\frac35.
\]

We next need the size of a disk on which $\theta$ continues. Koebe's quarter theorem states that, for an injective holomorphic map $h:\D\to\mathbb C$, the image $h(\D)$ contains the disk with centre $h(0)$ and radius $|h'(0)|/4$; see \cite[Theorem~8.5]{BM}. We apply it to $h(z)=\phi(\ell z)$, where $\phi(0)=t$. The derivative of $\theta(\phi(\zeta))=\theta(t)e^\zeta$ at zero gives $|\phi'(0)|=1/v(t)$. Thus $\phi(\{|\zeta|<\ell\})$ contains $D(t,\rho_t)$, where $\rho_t=\ell/(4v(t))$. On this disk, the inverse $q_t=\phi^{-1}$ gives the continuation $\theta(z)=\theta(t)e^{q_t(z)}$, with $|q_t(z)|<\ell$. In particular,
\begin{equation}\label{eq:annular-continuation}
 |\theta(z)|\le e^\ell
 \qquad (z\in D(t,\rho_t)).
\end{equation}

By Cauchy's estimate for the bounded analytic function $q_t$,
\[
 |q_t''(z)|\le\frac{C\ell}{\rho_t^2}
 \le\frac{C v(t)^2}{\ell}
 \qquad (|z-t|\le\rho_t/2).
\]
Since $|q_t'(t)|=v(t)$, integration shows that $|q_t'(z)|$ lies between $v(t)/2$ and $2v(t)$ whenever $|z-t|\le c\ell/v(t)$, for a sufficiently small absolute $c>0$. On $\T$, we have $|q_t'|=|\theta'|=v$, and $v$ is the derivative of the phase with respect to angular length. Consequently, the phase changes by at least a fixed multiple of $\ell$ before an arc from $t$ exits this smaller disk. Thus $v$ stays comparable to $v(t)$ on arcs through $t$ with sufficiently small phase increment relative to $\ell$.

On each proper maximal regular arc, the phase ranges over all of $\mathbb R$: a finite limit would let the reflected inverse continue $\theta$ across that endpoint.

\smallskip\noindent\emph{Step 2: phase intervals.} We fix a sufficiently small absolute $\kappa>0$ and partition the regular arcs into intervals $I_j$ with phase increments in $[\kappa\ell/2,\kappa\ell]$. If the whole circle is regular, we choose equal increments dividing the total phase change. The notation $|I_j|$ denotes angular length. For a chosen endpoint $t_j$, we put $v_j=v(t_j)$. By the comparison from Step~1,
\begin{equation}\label{eq:annular-phase-partition}
 |I_j|\sim\frac\ell{v_j},
 \qquad v(t)\sim v_j\quad(t\in I_j),
\end{equation}
with comparable lengths for adjacent intervals. Hence, small fixed enlargements $I_j^*$ have bounded overlap. We can choose an absolute $c_1>0$ so that disks of radius $c_1|I_j|$ centered on $I_j$ have radial projections in $I_j^*$ and lie in the continuation neighborhoods from \eqref{eq:annular-continuation}. (For related partitions, see \cite[Section~3]{Baranov2009} and \cite{HJK}.)

\smallskip\noindent\emph{Step 3: the sampling estimate.} For every finite sum $f=\sum_n f_n\overline{g_n}$ in $X_\theta$, we claim that
\begin{equation}\label{eq:annular-sampling}
 \sum_j |I_j|\sup_{I_j}|f|\le C\norm f_1.
\end{equation}
By taking real and imaginary parts, it suffices to prove this for real-valued $f$. Then $F=\bar z\theta f=\sum_n f_n C_\theta g_n$ belongs to $H^1$, with $\norm F_1=\norm f_1$, and reflection across a regular arc gives
\[
 F(z)=\frac{\theta(z)^2}{z^2}\overline{F(1/\bar z)}
 \qquad(|z|>1)
\]
in the continuation neighborhood.

The function $F^*$ denotes the radial maximal function. We apply the submean inequality on a disk of radius $c_1|I_j|$ centered at a point of $I_j$. Inside $\D$ we use $F^*$, whereas outside it we use the reflection formula, whose factor is bounded by $e^{2\ell}\le e^{1/4}$ by \eqref{eq:annular-continuation}. Radial integration over a length $O(|I_j|)$ gives
\[
 |I_j|\sup_{I_j}|f|
 \le C\int_{I_j^*}F^*\,dm.
\]
The $L^2$ boundedness of the Hardy--Littlewood maximal operator $\mathcal M$, applied to $F^*\le C[\mathcal M(|F|^{1/2})]^2$, gives $\norm{F^*}_1\le C\norm F_1$. We sum over $j$ and use bounded overlap to conclude \eqref{eq:annular-sampling}.

\smallskip\noindent\emph{Step 4: a positive majorant.} For real-valued $f$, we put $c_j=\sup_{I_j}|f|$. The kernel formula and \eqref{eq:annular-phase-partition} give
\[
 |k_{t_j}^\theta(t)|
 =\left|\frac{\theta(t)-\theta(t_j)}{t-t_j}\right|
 \ge c v_j\qquad(t\in I_j),
\]
with the value at $t_j$ understood by continuity. So for a sufficiently large absolute $C_0$,
\[
 G=C_0\sum_j c_j\frac{|k_{t_j}^\theta|^2}{v_j^2}
 \ge|f|\quad\text{almost everywhere}.
\]
Since $\norm{k_{t_j}^\theta}_2^2=v_j$, \eqref{eq:annular-phase-partition} and \eqref{eq:annular-sampling} give
\begin{equation}\label{eq:annular-majorant}
 \norm G_1=C_0\sum_j\frac{c_j}{v_j}
 \le\frac C\ell\sum_j|I_j|c_j
 \le\frac C\ell\norm f_1,
\end{equation}
which also proves convergence of the series in $X_\theta$.

By \cite[Proposition~4.1]{BBK}, every nonnegative $u\in X_\theta$ is a squared modulus of a function in $K_\theta$, so $\norm u_{X_\theta}=\norm u_1$. Applying this identity to $G+f$ and $G-f$, we obtain
\[
 \norm f_{X_\theta}
 \le\frac12\bigl(\norm{G+f}_1+\norm{G-f}_1\bigr)
 =\norm G_1\le\frac C\ell\norm f_1.
\]
Thus \eqref{eq:annular-product-bound} holds for real-valued finite sums. Decomposing into real and imaginary parts proves the same bound for complex-valued finite sums, after adjusting the absolute constant $C$. By density we obtain the bound for every $f\in X_\theta$, completing the proof.
\end{proof}

\subsection*{AI Usage Statement.}
GPT 5.6 was used to assist with bibliographic checks. GPT 5.6 was also used for light editing of language. I reviewed the bibliographic comments and language edits suggested by GPT 5.6 and take full responsibility for the paper.

\end{document}